\numberwithin{equation}{section} \theoremstyle{plain}
\newcounter{warn}[page]
\newcommand{\danger}{${\color{red}\triangle}\llap{\raisebox{.3ex}%
{\tiny!\hspace{1.45ex}}}$}
\newcommand{\warning}[1]{%
\raisebox{.01em}[0em]{\danger\ifnum\value{warn} > 1%
\tiny\bf\arabic{warn}\fi}%
\marginpar{\color{red}\tiny{\ifnum\value{warn} > 1\tiny\bf\arabic{warn}:\fi}\tiny #1}%
\stepcounter{warn}}
\newtheorem{theorem}{Theorem}[section]
\newtheorem{definition}[theorem]{Definition}
\newtheorem{corollary}[theorem]{Corollary}
\newtheorem{proposition}[theorem]{Proposition}
\newtheorem{lemma}[theorem]{Lemma}
\theoremstyle{remark}
\newtheorem{remark}{Remark}
\newcommand{\Z}{\mathbb{Z}}
\newcommand{\R}{\mathbb{R}}
\renewcommand{\P}{\mathbb{P}}
\renewcommand{\S}{\mathbb{S}}
\newcommand{\T}{\mathbb{T}}
\renewcommand{\underbar}[1]{\underaccent{\bar}{#1}}
\newcommand{\eval}{\mathrm{ev}}%% evaluation map.
\newcommand{\Rel}[1][]{{\mathcal{R}_{#1}}}%% Subgroup of relations.
\newcommand{\Crit}{\mathrm{Crit}}%% Morse critical points
\newcommand{\sublevel}[3][]{{#2}^{#1\leq#3}}       %% sublevel of something
\newcommand{\zipped}[2]{\lfloor#1\rfloor_{#2}}%% something rel to a sublevel
\newcommand{\zip}[2]{\zeta_{#1}^{#2}}%% projection to a higher zipped sublevel
\newcommand{\Deck}{\mathcal{D}}
\newcommand{\Cov}[1]{\tilde{#1}}%% The covering space we are working on
\newcommand{\UCov}[1]{\widetilde{#1}}%% The universal cover
\newcommand{\Steps}[1][\alpha]{\mathfrak{G}_{#1}}%% Generators (as steps).
\newcommand{\Rels}[1][\alpha]{\mathfrak{R}_{#1}}%% Generators of relations.
\newcommand{\PreSpan}[1]{<\Deck,#1>}%%sub gp generated by Deck.A
\newcommand{\Span}[1]{\overline{\PreSpan{#1}}}%%sub gp generated by Deck.A
\newcommand{\NormalSpan}[1]{\Span{#1}^{\ast}}
\newcommand{\PreFree}[2][\Deck]{F_{#1\times#2}}%% gp freely generated by Deck x A
\newcommand{\Free}[2][\Deck]{\overline{\PreFree[#1]{#2}}}%%completion of it
\newcommand{\freeprod}{\mathop{\ast}}
\newcommand{\mingen}{\mu_{DTC}}%%Min nb generators up to deck transf. and compl
\newcommand{\minrel}{\rho_{DTC}}%%Min nb relations up to deck transf. and compl
\newcommand{\abelian}[1]{#1^{\mathrm{ab}}}%%Abelianzation of a group
\newcommand{\proabelian}[1]{{#1}^{\underline{\mathrm{ab}}}}%%pro-Abelianzation of a group
 \DeclareMathOperator\Hom{Hom}
\DeclareMathOperator\ProHom{{\underline{Hom}}}
\newcommand{\F}{{\mathbb F}}
\renewcommand{\H}{{\mathbb H}}
\renewcommand{\k}{{\mathfrak k}}
\newcommand{\h}{{\mathfrak h}}
\begin{document}

\title[Novikov fundamental group]{\large Novikov Fundamental group}%
\subjclass[2010]{Primary 57R19; Secondary 57R70, 57R17}

\author[J.-F. Barraud]{Jean-Fran\c cois Barraud $^+$}%
\email{barraud@math.univ-toulouse.fr}%
\address{%
  Institut de mathématiques de Toulouse\\
 Université Paul Sabatier -- Toulouse III\\
  118 route de Narbonne\\
  F-31062 Toulouse Cedex 9\\
  France} %
\thanks{$^+$ This work was partially supported by the CIMI}
\author[A. Gadbled]{Agnès Gadbled $^{++}$}%
\email{agnes.gadbled@math.uu.se}
\address{%
 Department of Mathematics,
Uppsala University, Box 480,
751 06 Uppsala, Sweden}%
 \thanks{$^{++}$ This work was partially supported by the
  Hausdorff Research Institute for Mathematics (HIM), University of Bonn
  and Wallenberg grant KAW 2016-0440}
\author[R. Golovko]{Roman Golovko $^{+++}$}%
\email{rgolovko@ulb.ac.be}%
\address{%
Départment de Mathématique, Université libre de Bruxelles, CP
218, Boulevard du Triomphe, B-1050 Bruxelles,
Belgique}%
 \thanks{$^{+++}$ This work was supported by the ERC Consolidator Grant 646649 ``SymplecticEinstein''}%
\author[H.V. L\^e]{H\^ong V\^an L\^e $^{++++}$}%
\email{hvle@math.cas.cz}%
\address{Institute of Mathematics CAS, Zitna 25,
  11567 Praha 1, Czech Republic} \thanks{$^{++++}$ Research of HVL  was supported by RVO:67985840 and the GA\v CR-project 18-00496S}

%\subjclass[2000]{57R17; 53D40, 14F35}%
%\keywords{Symplectic topology, Fundamental group, Floer theory, Morse
%  theory, Homotopy, Arnold conjecture}

%\begin{abstract}
%\end{abstract}
\date{\today}

\maketitle

$$
\includegraphics[width=2cm]{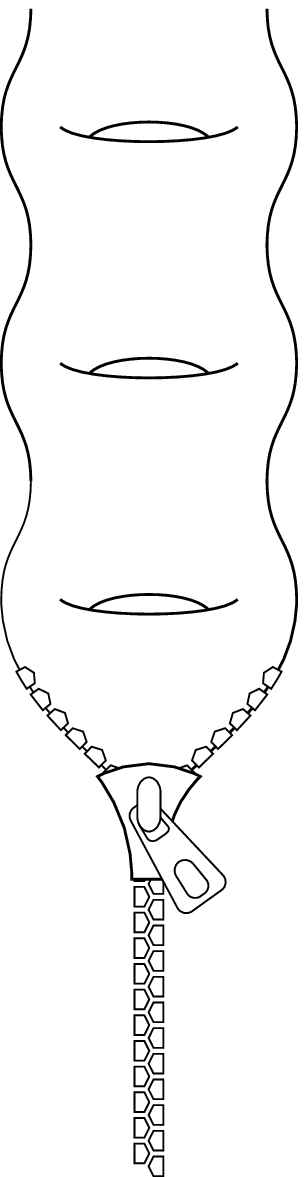}
$$

\begin{abstract}
  Given a $1$-cohomology class $u$ on a closed manifold $M$, we define a
  Novikov fundamental group associated to $u$, generalizing the usual
  fundamental group in the same spirit as Novikov homology generalizes
  Morse homology to the case of non exact $1$-forms.

  As an application, lower bounds for the minimal number of index $1$ and
  $2$ critical points of Morse closed $1$-forms are obtained,
  that are different in nature from those derived from the Novikov homology.
\end{abstract}

\section{Introduction and main results}

Consider a smooth and closed manifold $M$, and a  closed $1$-form
$\alpha$ in a non trivial cohomology class $u\in H^{1}(M,\R)$.

By analogy with the exact case, a point $p$ in $M$ is called
critical for $\alpha$ if $\alpha_{p}=0$. Such a critical point $p$
is said to be non degenerate, or Morse, if it is non degenerate as
the critical point of a local primitive $f_{\alpha}$ of $\alpha$ in
a neighborhood of $p$. The 1-form $\alpha$ itself is said to be
Morse if all its critical points are non degenerate.

The Novikov theory relates the critical points of (Morse) $1$-forms
$\alpha$ in the cohomology class $u$ and the ``topology'' of the
pair $(M,u)$.

\smallskip

The case $u=0$ is addressed by the Morse theory, which exhibits a
tight relation between the topology of $M$ and the critical points
of a Morse function on it~; in particular, Morse functions have to
have enough critical points to generate both $H_{*}(M)$ (the
classical Morse inequality) and $\pi_{1}(M)$ (\cite[Theorem 7.10, p.
172]{Sharko1993}).

Another extreme case is when $M$ fibers over the circle
$M\xrightarrow{\pi}\S^{1}$ and $\alpha=\pi^{*}(d\theta)$~: $\alpha$
obviously has no critical point in this case, regardless of the
actual topology of $M$. Notice that the converse is also true~: if
there is a $1$-form without critical points in $u$, then it comes
from a fibration over the circle \cite{Tischler1970}.

\medskip

The celebrated Novikov homology  \cite{Novikov} offers some
algebraic measurement of the complexity of the topology of $M$
``with respect to $u$'' or of the pair $(M,u)$, and the Novikov
inequalities \cite{Novikov} as well as further numerical invariants
developed in the same spirit \cite{Farber2004}, \cite{Pajitnov2006}
give lower bounds for the number of critical points of Morse
$1$-forms in the cohomology class $u$. As consequence, Novikov
proved the existence of periodic   solution in Kirchhoff  problem
\cite[Theorem 1, Appendix 1, p. 386]{DNF1984}. Novikov homology
also  enters in theory of symplectic  fixed points  on compact
symplectic manifolds \cite{LO1995} or Lagrangian
intersections/embeddings problems (\cite{Sikorav1986, Damian2009,
  Gadbled2009}).

\medskip

The object of this paper is to give a Novikov theoretic version of
the fundamental group, that plays in Novikov theory the role the
usual fundamental group plays in Morse theory.

More precisely, we fix a non trivial cohomology class $u\in
H^{1}(M,\R)$, and consider an integration cover $\Cov{M}$ associated
to $u$.

\begin{remark}\label{rk:ChoiceOfCoveringSpace}
  Throughout this paper, all the integration covers will always be
  supposed to be connected.

  The usual choices of covering spaces are either the minimal integration
  cover, or the universal cover, but any intermediate cover could be
  used~: each choice defines a different version of the invariant.

  The situation is similar to Novikov homology, which  was originally
  defined using the minimal integration cover \cite{Novikov} but was
  later extended to the universal cover (and in fact any intermediate
  cover) by Sikorav \cite{Sikorav1987}.
\end{remark}

We denote by $\tilde  u: \pi_{1}(M) \to \R$ the composition  of the
Hurewicz homomorphism $\pi_1 (M) \to H_{1}(M, \Z)$ with the
evaluation map $u: H_{1}(M, \Z) \to  \R$. Since
$\pi_{1}(\Cov{M})\subset\ker \tilde u$, the homomorphism $\tilde u$
descends to a homomorphism also denoted by $u: \Deck
=\pi_{1}(M)/\pi_{1}(\Cov{M}) \to \R$.  Note that $\Deck $ is the
deck transformation  group of $\Cov{M}$.

Then to $u$ and each choice of integration cover  $\Cov{M}$ is
associated a group $\pi_{1}(\Cov{M},u)$ endowed with an action of
$\Deck$, such that the following holds~:
\begin{theorem}\label{thm:Main}
  \begin{enumerate}
  \item
    For every choice of a closed $1$-form $\alpha$ in the class $u$ and a
    primitive $f_{\alpha}$ of $\alpha$, there is a group
    $\pi_{1}(f_{\alpha})$ that is isomorphic to $\pi_{1}(\Cov{M},u)$,

  \item
    there is a suitable notion of generators and relations ``up to deck
    transformations and a completion'' for which $\pi_{1}(\Cov{M},u)$ is
    finitely presented,
  \item
    if $\alpha$ is Morse, then the minimal number of generators (up to
    deck transformations and a completion) of $\pi_{1}(\Cov{M},u)$ is a
    lower bound for the number of index $1$ critical points of $\alpha$,

  \item
    similarly, if $\alpha$ is Morse, then the minimal number of relations
    (up to deck transformations and a completion) in $\pi_{1}(\Cov{M},u)$
    is a lower bound for the number of index $2$ critical points of
    $\alpha$.
  \end{enumerate}
\end{theorem}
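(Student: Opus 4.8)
The plan is to produce, for every Morse representative, an explicit combinatorial ``Morse--Novikov'' presentation of the fundamental group living on the cover $\Cov{M}$, and then to prove that this presentation is independent of all choices --- the same strategy by which Morse homology is identified with singular homology, but now carried out for the (completed, $\Deck$-equivariant) fundamental group, generalizing Sharko's theorem \cite{Sharko1993} from the exact case $u=0$. Concretely, I would fix a Morse closed $1$-form $\alpha$ in $u$, a primitive $f_{\alpha}\colon\Cov{M}\to\R$ (unique up to an additive constant, and $\Deck$-equivariant up to the translations prescribed by $u$), and a $\Deck$-invariant Morse--Smale metric obtained after an equivariant perturbation. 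The sublevel sets $\Cov{M}^{\le c}=f_{\alpha}^{-1}((-\infty,c])$ filter $\Cov{M}$, and crossing a critical value of index $k$ amounts to attaching a $k$-handle. From the critical points of index $0,1,2$ of $f_{\alpha}$ one assembles a $2$-dimensional model: a $\Deck$-equivariant ``reference forest'' of gradient trajectories through the index $0$ points fixes the $0$-skeleton, each index $1$ critical point joined to the forest by a pair of gradient trajectories contributes a generator, and each index $2$ critical point contributes a relator, namely the attaching word of its unstable disk read in those generators. The genuinely new feature, absent when $M$ itself carries the function, is that $\Cov{M}$ and its sublevel sets are non-compact and extend indefinitely toward $f_{\alpha}=-\infty$, so the closure of the unstable disk of an index $2$ critical point can pass through infinitely many index $1$ handles and its attaching word is an \emph{infinite} word; the point --- requiring a compactness argument --- is that only finitely many of its letters lie above any given level $c$, so the relators live in a \emph{Novikov completion} of the free group on the generators (formal words whose letters have $u$-levels bounded above and, above each level, finite in number). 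I would \emph{define} $\pi_{1}(f_{\alpha})$ as the group presented inside that completion by these generators and relators, equivalently as the edge-path group of the resulting ``Novikov $2$-complex''; for a general, possibly non-Morse $\alpha$ the same group should be recovered directly from the topology of the sublevel filtration and shown to agree with this description when $\alpha$ is Morse.

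The core is independence from $(\alpha,f_{\alpha})$, which simultaneously produces the intrinsic invariant $\pi_{1}(\Cov{M},u)$ of statement (1). I would join two Morse pairs by a generic path $(\alpha_{t},f_{t})$ of closed $1$-forms in $u$ and lift it to $\Cov{M}$: along such a path the only events are births and deaths of cancelling critical pairs and handle slides, each of which alters the presentation above by an elementary (Tietze-type) move --- introducing or deleting a generator together with a defining relation, or multiplying a relator by a conjugate of another. What has to be done is to prove that these moves, and the invariance of the presented group under them, survive passage to the completed, $\Deck$-equivariant setting: this needs a Tietze calculus for Novikov presentations, together with the verification at each birth/death that the created or destroyed generator enters the surviving relators only through Novikov-admissible subwords. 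Granting this, the presented group does not change, so $\pi_{1}(f_{\alpha})\cong\pi_{1}(\Cov{M},u)$ together with the residual $\Deck$-action inherited from the permutation of lifts, which is (1).

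Statements (2)--(4) then follow quickly. Any Morse $\alpha$ in $u$ exists (generic closed $1$-forms in a fixed class are Morse) and, via (1), exhibits a presentation of $\pi_{1}(\Cov{M},u)$ with one $\Deck$-orbit of generators per index $1$ critical point of $\alpha$ and one $\Deck$-orbit of relators per index $2$ critical point; hence $\pi_{1}(\Cov{M},u)$ is finitely presented ``up to deck transformations and a completion'', which is (2). Since the minimal number of generators (resp. relators) of $\pi_{1}(\Cov{M},u)$ in this sense is the infimum over \emph{all} such presentations, and every Morse $\alpha$ supplies one with exactly $\#\Crit_{1}(\alpha)$ generators (resp. $\#\Crit_{2}(\alpha)$ relators, where $\#\Crit_{i}(\alpha)$ denotes the number of index $i$ critical points of $\alpha$), that minimum is $\le\#\Crit_{1}(\alpha)$ (resp. $\le\#\Crit_{2}(\alpha)$) for every Morse $\alpha$, which is precisely (3) (resp. (4)).

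The main obstacle is concentrated in two technical points. First, setting up the Novikov completion of a free group and proving --- via Morse--Smale compactness together with the $\Deck$-cocompactness of $\Cov{M}$ --- that the unstable-disk attaching words of index $2$ critical points are genuinely Novikov-admissible, i.e.\ cross uniformly finitely many index $1$ handles in each $u$-window. Second, the deformation invariance: a completed, $\Deck$-equivariant Tietze theorem together with the bifurcation analysis of generic paths of Morse $1$-forms on a non-compact cover. A further issue running through everything is the case where $u$ has irrational periods, so $\Deck\cong\Z^{r}$ with $u$ injective but of dense image: then the ``windows'' and the completion must be organized via the ordered group $(u(\Deck),<)$ rather than via integer levels, the function $f_\alpha$ is no longer proper, and all transversality must be arranged $\Deck$-equivariantly.
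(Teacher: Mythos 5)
Your route is genuinely different from the paper's, and the difference matters: you \emph{define} $\pi_{1}(f_{\alpha})$ by a Morse--Novikov presentation (generators from index $1$ points, relators from attaching words of index $2$ unstable disks in a completed free group) and then must prove invariance by a Cerf-type bifurcation analysis of generic paths of Morse $1$-forms, together with a ``completed, $\Deck$-equivariant Tietze calculus''. The paper instead defines $\pi_{1}(f_{\alpha})$ intrinsically and for \emph{arbitrary} (not necessarily Morse) $\alpha$ as the projective limit $\varprojlim_{h}\pi_{1}(\Cov{M}/\sublevel{\Cov{M}}{h})$ of fundamental groups of the quotients collapsing sublevels to the basepoint. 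With that definition, invariance is nearly free: two primitives $f_{\alpha}$, $f_{\beta}$ of forms in the same class differ by the pullback of a function on the compact $M$, hence by a bounded function, so their sublevels interleave ($\sublevel[f_{\alpha}]{\Cov{M}}{h-K}\subset\sublevel[f_{\beta}]{\Cov{M}}{h}\subset\sublevel[f_{\alpha}]{\Cov{M}}{h+K}$) and the two projective systems are cofinal in one another, giving a canonical isomorphism of limits. The Morse data then enter only as a \emph{computation} of this intrinsic group: one proves that the Morse--Novikov steps generate up to DTC and completion by flowing an arbitrary loop down the gradient and retracting it onto unstable manifolds of index $1$ points, and that index $2$ unstable disks normally span the relations by the analogous argument for a nullhomotopy disk.

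The genuine gap in your proposal is precisely the step you flag as ``what has to be done'': the deformation invariance. In your architecture this is not a technicality but the entire content of statement (1), and it is left unproven. Establishing that births/deaths and handle slides along a generic path of closed $1$-forms induce Tietze moves that are admissible in the Novikov completion --- on a non-compact cover, equivariantly, with infinite relator words whose truncations must be controlled uniformly along the path --- is at least as hard as the whole theorem, and nothing in your sketch indicates how to control, say, a death of a cancelling $(1,2)$-pair whose connecting trajectory interacts with infinitely many translates of other handles. You also gesture at recovering the group ``directly from the topology of the sublevel filtration'' for non-Morse $\alpha$ without saying how; that construction is exactly the paper's definition, and once you adopt it as the definition the interleaving argument renders the bifurcation analysis unnecessary. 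I would therefore restructure: take the sublevel-quotient projective limit as the definition, prove invariance by interleaving, and demote your handle-attachment picture to the proof of statements (3) and (4), where your count of one $\Deck$-orbit of generators (resp.\ relators) per index $1$ (resp.\ index $2$) critical point is correct and matches the paper's Propositions on Morse--Novikov steps and relations.
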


\begin{remark}
  This definition does not use any extra choice of a base point. This is
  because in the non exact case (i.e. $u\neq 0$), which is the case we
  are interested in, there is a canonical choice of a base point, which
  essentially consists in putting it ``at $-\infty$''.

  In the exact case however, the primitives
  of $u$ are all bounded and this choice does not make sense anymore.

  Using other conventions about the base point to get a more uniform
  definition is possible but restrictive~: the usual notion of base point
  could be used, but breaks the action of the deck transformations which
  is an essential feature of the resulting group, or it could be replaced by
  a more subtle notion of base point (like properly embedded and shift
  invariant lines $\R\to\Cov{M}$) but this requires $u$ to be integral.

  Since we are only interested in the non exact case, we will stick to it
  and use the canonical choice of base point coming with it throughout
  this paper.

\end{remark}

To express the functoriality properties of the construction,
consider the Novikov fundamental group as associated to objects of
the form $(\Cov{M}\xrightarrow{\pi}M,u),$ where $\Cov{M}$ is a
connected covering space of a closed smooth manifold $M$ and $u$ is
a non trivial cohomology class in $H^{1}(M,\R)$ satisfying
$\pi^{*}(u)=0$.

A morphism between two such objects
$(\Cov{M}_{i}\xrightarrow{\pi_{i}}M_{i},u_{i})$, $i=1,2$, is then a
smooth covering map
$$
\xymatrix{
  \Cov{M}_{1}\ar^{\tilde{\varphi}}[r]\ar[d] & \Cov{M}_{2}\ar[d]\\
  M_{1}\ar^{\varphi}[r] & M_{2}\\
  }
$$
such that
$$
u_{1}=\varphi^{*}(u_{2}).
$$
\begin{theorem}\label{thm:funct}
  The Novikov fundamental group is functorial, i.e.
  \begin{enumerate}
  \item
    to an arrow $(\Cov{M}_{1} \to M_{1},u_{1}) \xrightarrow{\varphi}
    (\Cov{M}_{2} \to M_{2},u_{2})$ as above is associated a group
    homomorphism $\varphi_{*}: \pi_{1} ( \Cov{M_{1}}, u_{1}) \to \pi_{1}
    (\Cov{M}_{2}, u_{2})$  which commutes with the corresponding deck
    transformations,
  \item
    if $(\Cov{M}_{2}\to M_{2},u_{2}) \xrightarrow{\phi} (\Cov{M}_{3} \to
    M_{3},u_{3})$ is another morphism, then $[\phi \circ \varphi]_{*} =
    \phi_{*} \circ \varphi_{*}$.
  \end{enumerate}
\end{theorem}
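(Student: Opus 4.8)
The plan is to construct $\varphi_*$ at the level of the concrete models $\pi_1(f_\alpha)$ provided by Theorem~\ref{thm:Main}(1) and then transport it through the canonical isomorphisms $\pi_1(f_\alpha)\cong\pi_1(\Cov M,u)$.

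Fix a morphism $\varphi$ with covering lift $\tilde\varphi\colon\Cov M_1\to\Cov M_2$. Choose a closed $1$-form $\alpha_2$ representing $u_2$ and a primitive $f_2$ of $\pi_2^{*}\alpha_2$ on $\Cov M_2$ (which exists since $\pi_2^{*}u_2=0$), and set $\alpha_1:=\varphi^{*}\alpha_2$ and $f_1:=f_2\circ\tilde\varphi$. From $u_1=\varphi^{*}(u_2)$ the form $\alpha_1$ represents $u_1$, and from $\pi_2\circ\tilde\varphi=\varphi\circ\pi_1$ one gets $df_1=\tilde\varphi^{*}\pi_2^{*}\alpha_2=\pi_1^{*}\alpha_1$, so $f_1$ is a primitive of $\pi_1^{*}\alpha_1$. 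Hence $\tilde\varphi$ is \emph{level preserving}: $\tilde\varphi\big(f_1^{-1}(-\infty,c]\big)\subset f_2^{-1}(-\infty,c]$ for all $c$, and it respects the filtration by sublevel sets $f_i^{-1}(-\infty,c]$ used to define $\pi_1(f_{\alpha_i})$ near $-\infty$. Functoriality of that construction for level preserving maps then yields a homomorphism $\tilde\varphi_*\colon\pi_1(f_1)\to\pi_1(f_2)$. Since the integration covers are regular (so $\Deck_i=\pi_1(M_i)/\pi_1(\Cov M_i)$ as in the introduction), any $g\in\Deck_1$ satisfies $\tilde\varphi\circ g=\delta(g)\circ\tilde\varphi$ for a unique $\delta(g)\in\Deck_2$; $\delta$ is a homomorphism, and comparing $f_2\circ\tilde\varphi\circ g$ with $f_2\circ\delta(g)\circ\tilde\varphi$ shows $u_2\circ\delta=u_1$, so $\delta$ preserves the $u$-negative direction and $\tilde\varphi_*$ intertwines the $\Deck_1$- and $\Deck_2$-actions through $\delta$.

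Define $\varphi_*\colon\pi_1(\Cov M_1,u_1)\to\pi_1(\Cov M_2,u_2)$ by conjugating $\tilde\varphi_*$ with the isomorphisms of Theorem~\ref{thm:Main}(1). The point to check is independence of the auxiliary choice $(\alpha_2,f_2)$. For a second choice $(\alpha_2',f_2')$ the isomorphisms of Theorem~\ref{thm:Main}(1) are compatible with a continuation isomorphism $\pi_1(f_2)\cong\pi_1(f_2')$, and likewise downstairs for the pulled-back pair $(\alpha_1',f_1')$; so it suffices to see that $\tilde\varphi_*$ commutes with these continuation isomorphisms. This holds because the interpolating data defining continuation upstairs (a path of $1$-forms with chosen primitives, equivalently a homotopy of the sublevel filtrations) pulls back along the level preserving map $\tilde\varphi$ to the corresponding data downstairs, and the passage ``to $-\infty$'' commutes with this pullback. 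Taking $\varphi=\Id$, $\tilde\varphi=\Id$ gives $\varphi_*=\Id$, which together with the equivariance above proves (1).

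For (2), if $\phi$ has covering lift $\tilde\phi\colon\Cov M_2\to\Cov M_3$, then $\phi\circ\varphi$ has covering lift $\tilde\phi\circ\tilde\varphi$. Choosing $(\alpha_3,f_3)$ on $\Cov M_3$ and pulling it back along $\tilde\phi$ and then $\tilde\varphi$ produces compatible data on all three covers, and by the independence just established $\varphi_*$, $\phi_*$ and $[\phi\circ\varphi]_*$ can all be computed from these choices. Since $\tilde\phi\circ\tilde\varphi$ is again level preserving and $f\mapsto\pi_1(f)$ is functorial under composition of level preserving maps, $(\tilde\phi\circ\tilde\varphi)_*=\tilde\phi_*\circ\tilde\varphi_*$; transporting through the Theorem~\ref{thm:Main}(1) isomorphisms gives $[\phi\circ\varphi]_*=\phi_*\circ\varphi_*$. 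I expect the main obstacle to be the independence claim in (1): one must verify that the choices involved in continuation (homotopies of $1$-forms and primitives, and the resulting identifications near $-\infty$) are natural under a level preserving covering map, and that no loss of connectivity of the deep sublevel sets spoils the canonical base point ``at $-\infty$'' — precisely the place where $\pi^{*}u=0$ and $u_1=\varphi^{*}u_2$, which force $\tilde\varphi$ to send $u_1$-negative ends into $u_2$-negative ends, are used.
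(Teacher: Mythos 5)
Your proposal follows essentially the same route as the paper: pull back $\alpha_{2}$ and its primitive along $\tilde{\varphi}$, observe that sublevels map to sublevels so that the zipped groups receive compatible morphisms whose projective limit is $\varphi_{*}$, and derive the deck equivariance from $\tilde{\varphi}\circ g=\delta(g)\circ\tilde{\varphi}$. The only difference is that you explicitly check independence of the auxiliary choice $(\alpha_{2},f_{2})$ via compatibility with the invariance isomorphisms, a point the paper leaves implicit; this is a correct and welcome addition but not a different argument.
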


Notice that if $(\Cov{M_{2}}\to M_{2},u_{2})$ is given, and
$M_{1}\xrightarrow{\varphi}M_{2}$ is a map from a smooth closed
manifold $M_{1}$ to $M_{2}$, then $\varphi^{*}(\Cov{M}_{2})$ is an
integration cover of $\varphi^{*}(u_{2})$, but it does not need to
be connected (nor needs $\varphi^{*}(u_{2})$ to be non trivial).
However, we have the following proposition~:
\begin{proposition}\label{prop:funct_pullback}
  In the situation above, each connected component
  ${\varphi^{*}(\Cov{M}_{2})}_{0}$ of $\varphi^{*}(\Cov{M}_{2})$ is an
  integration cover for $\varphi^{*}(u_{2})$. Whenever
  $\varphi^{*}(u_{2})\neq 0$,  the group
  $\pi_{1}({\varphi^{*}(\Cov{M}_{2})}_{0},\varphi^{*}u_{2})$ does not
  depend on the choice of connected component and $\varphi$ induces a
  group morphism
  $$
  \pi_{1}({\varphi^{*}(\Cov{M}_{2})}_{0},\varphi^{*}u_{2})\xrightarrow{\varphi_{*}}
  \pi_{1}(\Cov{M}_{2},u_{2}).
  $$
\end{proposition}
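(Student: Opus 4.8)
The plan is to reduce everything to the already-established functoriality of Theorem \ref{thm:funct} by checking that each connected component of the pullback cover is itself a legitimate object of the category in which the Novikov fundamental group lives. First I would verify that $\varphi^{*}(u_{2})\neq 0$ forces each connected component ${\varphi^{*}(\Cov{M}_{2})}_{0}$ to be an integration cover of $\varphi^{*}(u_{2})$: the defining property of an integration cover is that the pulled-back class vanishes on the cover, and this is immediate since ${\varphi^{*}(\Cov{M}_{2})}_{0}$ maps into $\Cov{M}_{2}$ on which $u_{2}$ already pulls back to zero. Thus $({\varphi^{*}(\Cov{M}_{2})}_{0}\to M_{1},\varphi^{*}u_{2})$ is an admissible object, and the inclusion-followed-by-covering map
$$
\xymatrix{
  {\varphi^{*}(\Cov{M}_{2})}_{0}\ar[r]\ar[d] & \Cov{M}_{2}\ar[d]\\
  M_{1}\ar^{\varphi}[r] & M_{2}\\
  }
$$
is a morphism in that category, so Theorem \ref{thm:funct} supplies the desired $\varphi_{*}$ on the level of the fixed component.

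The substantive point is independence of the choice of connected component. Here I would use that the deck transformation group $\Deck_{1}$ of $M_{1}$ (or rather the relevant subgroup $\pi_{1}(M_{1})$) acts transitively on the set of connected components of $\varphi^{*}(\Cov{M}_{2})$ lying over a fixed point — this is the standard description of a pullback cover as $\{(x,\tilde y): \varphi(x)=\pi_{2}(\tilde y)\}$, whose components over $M_{1}$ are indexed by the cosets of the image of $\pi_{1}(M_{1})$ composed with $\varphi_{*}$ in $\Deck_{2}$. Pick two components $C, C'$; there is a deck transformation (equivalently, a homeomorphism lying over $\mathrm{Id}_{M_{1}}$ and compatible with the covering structure) carrying $C$ to $C'$. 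Since such a homeomorphism is itself a morphism of objects $(C\to M_{1},\varphi^{*}u_{2})\to(C'\to M_{1},\varphi^{*}u_{2})$ covering the identity, functoriality (applied also to its inverse, using part (2) of Theorem \ref{thm:funct}) gives an isomorphism $\pi_{1}(C,\varphi^{*}u_{2})\xrightarrow{\sim}\pi_{1}(C',\varphi^{*}u_{2})$. Moreover this isomorphism commutes with the $\varphi_{*}$ maps into $\pi_{1}(\Cov{M}_{2},u_{2})$, again by part (2) applied to the two ways of composing around the triangle $C\to C'\to\Cov{M}_{2}$ versus $C\to\Cov{M}_{2}$, because these two composite morphisms of objects agree as maps of covers. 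Hence the induced map on $\pi_{1}(\Cov{M}_{2},u_{2})$ is canonically the same regardless of the component chosen.

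The main obstacle I anticipate is not any deep idea but a bookkeeping subtlety: one must make sure that the homeomorphism $C\to C'$ chosen above genuinely qualifies as a morphism in the category of Theorem \ref{thm:funct}, i.e. that it is a \emph{smooth covering map} commuting with the covering projections to $M_{1}$ and intertwines $\varphi^{*}u_{2}$ with itself. This is where the connectedness hypothesis on integration covers (Remark \ref{rk:ChoiceOfCoveringSpace}) is used — it is what allows us to talk about ``the'' Novikov fundamental group of a component at all — and where one should be careful that $\varphi^{*}u_{2}$ being nonzero is what keeps each component inside the admissible category (otherwise the group would not be defined via the constructions of Theorem \ref{thm:Main}, which rely on unbounded primitives). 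Once that verification is made explicit, the proposition follows formally from Theorem \ref{thm:funct}, with the deck transformation action on $\pi_{1}(\Cov{M}_{2},u_{2})$ being respected automatically because every map in sight commutes with deck transformations by part (1).
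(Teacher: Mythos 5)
The paper gives no standalone proof of this proposition (it is presented as a consequence of Theorem \ref{thm:funct}), and your reduction to that theorem is exactly the intended route. The first part is fine: $\varphi^{*}u_{2}$ pulls back to zero on each component because the covering projection factors through $\Cov{M}_{2}$, and a connected component of a cover of the connected manifold $M_{1}$ is again a cover of $M_{1}$. The component-independence is also essentially right: a deck transformation $g\in\Deck_{\Cov{M}_{2}}$ acts on $\varphi^{*}(\Cov{M}_{2})=\{(x,\tilde y):\varphi(x)=\pi_{2}(\tilde y)\}$ through the second factor, carries any component $C$ onto any other component $C'$, is a diffeomorphism over $\mathrm{Id}_{M_{1}}$ preserving $\varphi^{*}u_{2}$, and Theorem \ref{thm:funct} applied to it and its inverse yields $\pi_{1}(C,\varphi^{*}u_{2})\simeq\pi_{1}(C',\varphi^{*}u_{2})$; the hypothesis $\varphi^{*}u_{2}\neq0$ is indeed what makes these components admissible objects.

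Two points need repair. First, it is not $\pi_{1}(M_{1})$ that acts transitively on the set of components: the orbits of the monodromy action of $\pi_{1}(M_{1})$ on a fibre \emph{are} the components, so that action is trivial on $\pi_{0}$; transitivity comes from the $\Deck_{\Cov{M}_{2}}$-action on the second factor. You use the correct statement in the following sentence, so this is only a wording slip. Second, and more substantively, the triangle $C\to C'\to\Cov{M}_{2}$ versus $C\to\Cov{M}_{2}$ does \emph{not} commute on the nose: writing $\tilde{\varphi}(x,\tilde y)=\tilde y$, one has $\tilde{\varphi}\circ g=g\circ\tilde{\varphi}$, so the two composites differ by the deck transformation $g$ acting on $\Cov{M}_{2}$. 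Hence the maps into $\pi_{1}(\Cov{M}_{2},u_{2})$ obtained from $C$ and from $C'$ agree only up to post-composition with the automorphism $g_{*}$ given by the deck action, not canonically. This is harmless for the proposition as stated, which claims only that the group is independent of the component and that \emph{a} morphism $\varphi_{*}$ exists --- both of which your argument delivers --- but your stronger assertion that ``the induced map on $\pi_{1}(\Cov{M}_{2},u_{2})$ is canonically the same regardless of the component'' rests on the false premise that the two composite morphisms agree as maps of covers, and should be weakened to independence up to the $\Deck_{\Cov{M}_{2}}$-action.
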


Another feature of this Novikov fundamental group is that it
supports a Hurewicz morphism~:
\begin{theorem}\label{thm:HurewiczMini}
  Suppose  that $\Cov{M}$ is the minimal integration cover.

  Then there is a surjective homomorphism
  $$
  \h: \pi_1(\Cov{M}, u) \to HN_{1}(M,u;\Z).
  $$
  whose kernel is a suitable abelianization $\proabelian{\pi_1(\Cov{M},
    u)}$ that takes the completion process into account (see section
  \ref{sec:hur}).
\end{theorem}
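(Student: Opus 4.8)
The plan is to carry out the classical Hurewicz argument — the abelianization $\pi_1\twoheadrightarrow H_1$, with kernel the commutator subgroup — at the level of the chain complexes underlying the two sides, and then to check that the completion built into $\pi_1(\Cov M,u)$ matches the Novikov completion underlying $HN_1(M,u;\Z)$.

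First I would fix, using Theorem~\ref{thm:Main}(1), a Morse closed $1$-form $\alpha$ in $u$ together with a primitive $f_\alpha$ on $\Cov M$, and work with the isomorphic model $\pi_1(f_\alpha)$. Since $\Cov M$ is the \emph{minimal} integration cover, $\Deck\cong\operatorname{im}\tilde u\subset\R$ is a finitely generated free abelian group, and $HN_*(M,u;\Z)$ is the homology of the ordinary Novikov complex $C_*(\Cov M;\Z)\otimes_{\Z[\Deck]}\widehat{\Z[\Deck]}$; moreover the generators of $\pi_1(f_\alpha)$ ``up to deck and completion'' are indexed by the index-$1$ critical points of $\alpha$ and its relations by the index-$2$ ones, that is, by bases of $C_1$ and $C_2$. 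This pins down $\h$ on the dense pre-completion subgroup: send the generator attached to an index-$1$ point $p$ to the generator $[p]\in C_1$, extend multiplicatively, and read concatenation of the based loops ``at $-\infty$'' as addition of $1$-chains, a loop becoming a genuine $1$-cycle because $\Cov M$ is connected.

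Three things then have to be checked, in this order. \textbf{(a)} Well-definedness before completion: the abelianization of each defining relation word of $\pi_1(f_\alpha)$ should equal $\partial_2$ of the corresponding generator of $C_2$, which is the same count of moduli spaces of gradient segments between critical points that yields the presentation of Theorem~\ref{thm:Main}, now pushed to abelian coefficients; hence $\h$ descends to a homomorphism on the pre-completion group, and it is onto there since every finitely supported Novikov $1$-cycle is, modulo $\operatorname{im}\partial_2$, the class of a based loop. \textbf{(b)} Continuity and passage to the limit: one shows $\h$ maps a fundamental system of neighborhoods of the identity to one of $0$, so it extends uniquely to $\h\colon\pi_1(\Cov M,u)\to HN_1(M,u;\Z)$, and surjectivity survives because an arbitrary Novikov $1$-cycle is an admissible infinite sum of finitely supported ones already in the image, the matching infinite product converging in $\pi_1(\Cov M,u)$ precisely by the support condition ($u$ bounded above) that defines the completion. \textbf{(c)} The kernel: one inclusion is formal — $HN_1$ is abelian and $\h$ is continuous, so the closure of the commutator subgroup, which is the announced subgroup $\proabelian{\pi_1(\Cov M,u)}$, lies in $\ker\h$; for the converse, if $\h(g)=0$ then the abelianized word of $g$ is a boundary $\partial_2 c$, and pulling $c$ back through the presentation writes $g$, up to the completion, as a product of conjugates of relation words times commutators, so $g\in\proabelian{\pi_1(\Cov M,u)}$. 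Together these give the surjection $\h$ with the announced kernel, whence $HN_1(M,u;\Z)\cong\pi_1(\Cov M,u)/\proabelian{\pi_1(\Cov M,u)}$.

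I expect the crux to be steps \textbf{(b)}--\textbf{(c)}: unlike in the finitely presented classical situation, the kernel has to be identified as the \emph{closure} of the commutator subgroup rather than the commutator subgroup itself, so the usual argument that a Hurewicz-trivial loop is a product of commutators and conjugates of relators must be run inside the completion and made to converge. This is exactly where one must verify that the completion process defining $\pi_1(\Cov M,u)$ coincides with the filtration by $u$-values that produces the Novikov ring, so that no further collapsing occurs on either side; establishing that compatibility carefully is the real content of the proof.
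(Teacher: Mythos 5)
Your skeleton---a level-wise Hurewicz map compatible with the projective systems, passage to the limit, and identification of the kernel with a closure of the commutator subgroup---is the right one, and your chain-level route through the Morse presentation (generators $=$ index $1$ points, relators $=$ index $2$ points, relators abelianizing to $\partial_2$) is a workable variant of what the paper does. The paper instead applies the classical Hurewicz theorem directly to each quotient space $\zipped{\Cov{M}}{h}$, obtaining $\zipped{\pi_1(f_\alpha)}{h}\twoheadrightarrow H_1(\Cov{M},\sublevel{\Cov{M}}{h})$ with kernel the commutator subgroup, and then takes inverse limits; exactness in the limit (hence surjectivity of $\h$ and the identification of $\ker\h$ with the pro-commutator subgroup $\varprojlim[\,\cdot\,,\cdot\,]$) is secured by a Mittag--Leffler condition for $(\zipped{\pi_1(f_\alpha)}{h})_h$, proved by the Morse-theoretic trick of extending the ends of a path down to $-\infty$ at bounded extra height. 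Be aware that your ``extend by continuity'' step silently requires exactly this input: an exact sequence of inverse systems stays exact in the limit only when the system of kernels is Mittag--Leffler.

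The genuine gap is in the target. What your construction (and the paper's) naturally produces is a surjection onto $\varprojlim_h H_1(\Cov{M},\sublevel{\Cov{M}}{h})$, \emph{not} onto $HN_1(M,u;\Z)$. These differ in general: by Sikorav's comparison sequence
$$
0\to\lim\nolimits^{1} H_{2}(\Cov{M},\sublevel{\Cov{M}}{h}) \to HN_{1}(M,u)\to
\varprojlim_{h}H_{1}(\Cov{M},\sublevel{\Cov{M}}{h})\to 0,
$$
the map $HN_1\to\varprojlim_h H_1$ is surjective but can fail to be injective, so a homomorphism into the inverse limit does not automatically upgrade to a well-defined homomorphism into $HN_1$; concretely, an element of $\pi_1(\Cov{M},u)$ need not be ``clean,'' its deeper and deeper representatives may keep climbing, and there is then no single Novikov $1$-cycle attached to it. Your step (c) has the same defect: from $\bar g=\partial_2 c$ you need the truncations of $c$ to have controlled height in order to conclude level-by-level membership in the commutator subgroups. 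The paper resolves both issues with Usher's theorem: when $\Deck\cong\Z^r$, a Novikov boundary bounds a chain going only a bounded amount higher, which makes $(H_*(\Cov{M},\sublevel{\Cov{M}}{h}))_h$ Mittag--Leffler and kills the $\lim^1$ term. This is the \emph{only} place where the hypothesis that $\Cov{M}$ is the minimal integration cover enters---and your proposal never uses that hypothesis, which is the telltale sign that this step is missing. Your closing remark that the two completion processes must be compared points at the right location, but the actual content there is this $\lim^1$ vanishing, not a formal compatibility of filtrations.
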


Moreover, the following example shows that this invariant is non
trivial, and different in nature from the invariants derived from
the Novikov homology~:

\begin{theorem}\label{thm:exple}
  Let $S$ be the Poincaré homology sphere and $M=\T^{n}\sharp (S\times
  \S^{n-3})$ be the connected sum of a torus and the product of $S$ with
  a sphere.

  On $M$, consider the class $u=\pi^{*}(d\theta_{1})$ where $\pi$ is the
  projection $M\xrightarrow{\pi_{1}} \S^{1}$ to the first coordinate
  $\theta_{1}$ on the torus.

  Then the Novikov homology associated to the minimal integration cover
  of $u$ vanishes in degrees $1$ and $2$, but the associated Novikov
  fundamental group $\pi_{1}(\Cov{M},u)$ is non trivial. Its minimal
  number of generators and relations up to deck transformations and
  completion are $2$ and $2$.

  In particular, any Morse $1$-form $\alpha$ in the class $u$ necessarily
  has at least $2$ index $1$ and $2$ index $2$ critical points.
\end{theorem}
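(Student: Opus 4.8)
The plan is to compute both the Novikov homology and the Novikov fundamental group of $(M,u)$ directly from the decomposition $M=\T^{n}\sharp(S\times\S^{n-3})$ and the fact that $u$ is pulled back from $\S^1$ via the torus factor. First I would identify the minimal integration cover: since $u=\pi^{*}(d\theta_1)$ is integral and indivisible, the minimal cover $\Cov M$ is the infinite cyclic cover corresponding to $\ker(\pi_*\colon\pi_1(M)\to\Z)$, and the deck group is $\Deck\cong\Z$. I would first treat the torus factor: for $\T^n=\S^1\times\T^{n-1}$ with $u$ dual to the $\S^1$ direction, the minimal cover is $\R\times\T^{n-1}$, whose Novikov homology in positive degrees vanishes (the Novikov ring $\Lambda$ is a field-like PID once we invert, and $H_*(\R\times\T^{n-1};\Z)\otimes\Lambda$ is free and concentrated by a Mayer--Vietoris/Wang-sequence argument showing the degree-$1$ and degree-$2$ Novikov homology die). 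Then I would feed in the connected summand $S\times\S^{n-3}$: because $S$ is a homology sphere, $\tilde H_*(S\times\S^{n-3};\Z)$ is nontrivial only in degrees $n-3$ and $n$ (plus $2(n-3)$ if $n-3$ coincides), so for $n$ large enough that $n-3>2$ the connected sum does not affect $H_1$ or $H_2$. A Mayer--Vietoris argument (splitting $M$ along the connecting sphere $\S^{n-1}$ and passing to the cyclic cover, which over the $S\times\S^{n-3}$ side is just a disjoint union of copies since that piece is simply connected and carries no cohomology from $u$) then shows $HN_1(M,u;\Z)=HN_2(M,u;\Z)=0$.

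Next I would compute $\pi_1(\Cov M,u)$. By Theorem~\ref{thm:Main}(1) I may pick any Morse $1$-form $\alpha$ in the class $u$ with primitive $f_\alpha$ on $\Cov M$ and use the presentation-type model $\pi_1(f_\alpha)$. The key point is van Kampen-type behaviour: $\pi_1(M)=\Z^n \ast_{\{1\}} \pi_1(S)=\Z^n\ast I$, where $I$ is the binary icosahedral group (perfect, of order $120$), since the connected sum is taken along simply connected spheres of dimension $\geq 2$. Passing to the cover, $\pi_1(\Cov M)=\ker(\Z^n\ast I\to\Z)$, which by the Kurosh subgroup theorem is a free product of a free group with infinitely many conjugates of $I$ (one family of conjugates $g^k I g^{-k}$, $k\in\Z$, permuted by the deck action). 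The Novikov fundamental group should then be a ``completion up to deck transformations'' of this, and the essential invariant that survives the completion is exactly the presence of one deck-orbit of the finite perfect group $I$: since $I$ is perfect it dies under the abelianization of Theorem~\ref{thm:HurewiczMini}, consistent with $HN_1=0$, yet it is nontrivial, and moreover $I$ has a balanced presentation with $2$ generators and $2$ relations (the standard $\langle a,b\mid a^2=b^3=(ab)^5,\ a^4=1\rangle$, or the classical two-generator two-relator presentation $\langle s,t\mid (st)^2=s^3=t^5\rangle$). I would show that, up to deck transformations and completion, $2$ generators suffice (one does not need the free part, which is itself generated up to deck translation by elements that can be absorbed) and that $2$ relations are necessary, by using the Hurewicz/abelianization obstruction together with a deficiency argument: the completed abelianized group must be the Novikov homology $HN_1=0$, so any deficiency-$>0$ presentation would force nontriviality mod commutators — ruling out a $1$-relator model — while the perfectness and the explicit structure of $I$ give the matching lower bounds.

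The main obstacle I anticipate is making the ``up to deck transformations and a completion'' bookkeeping precise in the presence of the free-product structure coming from Kurosh: one must show that the infinitely many conjugate copies of $I$ and the free part all collapse, under the completion, to the single deck-orbit contribution of one copy of $I$, so that the minimal number of generators and relations is genuinely $2$ and not larger. This requires understanding how the completion (which morally throws away everything that ``escapes to $-\infty$'' along the deck direction) interacts with Kurosh decompositions — essentially a Novikov-ring analogue of the statement that the cyclic cover of a connected sum ``forgets'' the summand that carries no monodromy. I would handle this by working on the primitive side with $f_\alpha$: the sublevel sets $\{f_\alpha\le c\}$ are, up to the torus part, homotopy equivalent to a wedge that stabilizes, and the finite group $I$ appears in a single fundamental domain; the completion identifies translates, leaving one copy. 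Once this structural statement is established, the lower bounds on index $1$ and $2$ critical points follow immediately from Theorem~\ref{thm:Main}(3)--(4), and the comparison with Novikov homology follows from the computation in the first paragraph together with Theorem~\ref{thm:HurewiczMini}.
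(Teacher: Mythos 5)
Your computation of the Novikov homology and your two upper bounds ($\mu_{DTC}\leq 2$ from a generating pair of the binary icosahedral group $I=\pi_{1}(S)$, and $\rho_{DTC}\leq 2$ from its balanced presentation) match the paper, and your deficiency argument for ``$r\geq m$'' is in the right spirit: the paper implements it as the exact sequence \eqref{eq:exactsequenceOLLambda}, $0\to L_{\Lambda_{\R}}\to\Lambda_{\R}^{m}\to\Lambda_{\R}^{r}$, together with $L_{\Lambda_{\R}}\simeq\Hom(\pi_{1}(X),\R)\otimes\Lambda_{\R}=0$, and this step needs the ``well-behaved relations'' Mittag--Leffler hypothesis that you should not skip. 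The genuine gap is at the single hardest point of the theorem: the lower bound $\mu_{DTC}(\pi_{1}(\Cov{M},u))\geq 2$. You attribute it to ``perfectness and the explicit structure of $I$'', but perfectness is precisely what makes every abelianization/Hurewicz obstruction vanish here ($HN_{1}=0$), so those tools give no lower bound on the number of generators at all; and without $m\geq 2$ your inequality $r\geq m$ only yields $r\geq 1$, not $r\geq 2$, so the relation count collapses as well. The paper explicitly warns that this bound is ``surprisingly far from obvious'': up to deck transformations and completion, a single element can generate a very large group --- for instance the cartesian product $I^{\Z}$ is generated by one element up to shift even though $I$ is not cyclic. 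The actual proof identifies $\pi_{1}(\Cov{M},u)$ with $\varprojlim_{h}\freeprod_{k\geq h}G_{k}$ (each $G_{k}\simeq\pi_{1}(X)$) and then runs a delicate normal-form argument in free products (Lemmas \ref{lem:power_is_singleletter} and \ref{lem:top_is_single}, and the proof of Proposition \ref{prop:noncyclic}) to show that one deck orbit cannot generate unless $\pi_{1}(X)$ is cyclic. Nothing in your proposal substitutes for this step.

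A secondary point: your route to the group itself, via the Kurosh decomposition of $\ker(\Z^{n}\ast I\to\Z)$ followed by an argument that the free part and the infinitely many conjugate copies of $I$ collapse under completion, is a detour, and it misidentifies where the free part dies. The Novikov fundamental group is the inverse limit of the groups $\pi_{1}(\zipped{\Cov{M}}{h})$, and for this example one computes directly $\pi_{1}(\zipped{\Cov{M}}{h})=\freeprod_{k\geq h}\pi_{1}(X)$: loops in the torus directions become trivial because they can be pushed into the collapsed sublevel, so no free factor ever appears and no Kurosh bookkeeping is needed. Finally, be careful with the range of $n$: $H_{1}(S\times\S^{n-3})=H_{2}(S\times\S^{n-3})=0$ forces $n\geq 6$, as your own parenthetical ``$n-3>2$'' already suggests.
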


\begin{remark}\label{rem:ExampleConditions}
  The Poincaré sphere does not play a crucial role here except for the
  explicit bounds $2$ and $2$~: $S\times\S^{n-3}$ could be replaced by
  any closed $n$-manifold $X$, $n\geq 4$, such that
  $H_{1}(M,\Z)=H_{2}(M,\Z)=0$, and the Novikov fundamental group would
  remain non trivial, while the Novikov homology would still vanish.
\end{remark}

Notice that the Novikov homology $HN_{*}(M,u;\pi_{1}(M))$ associated
to the universal cover of $M$ (i.e. when $\Deck=\pi_{1}(M)$)
contains much more information about the fundamental group than that
associated to the minimal integration cover. In particular,
$HN_{1}(M,u;\pi_{1}(M))\neq0$ in this example. However, the second
Novikov homology group $HN_{2}(M,u;\pi_{1}(M))$ still vanishes, so
it does not give any constrain on the number of index $2$ critical
points (while the Novikov fundamental group does).

\begin{remark}
  A nice observation (see beginning of section
  \ref{sec:ComparisonOnUniversalCover}) pointed to us by A.~Pajitnov
  allows to produce index $2$ critical points out the comparison of the
  Novikov homologies associated to the universal and the minimal covers,
  even though $HN_{2}(M,u)=HN_{2}(M,u;\pi_{1}(M))=0$.

  However, a similar example ($\T^{n}\sharp\R\P^{n}$, see
  section \ref{sec:ComparisonOnUniversalCover}) shows that the estimates
  derived from the Novikov fundamental group are also essentially
  different from those obtained in this way.

\end{remark}

\bigskip

The construction of the Novikov fundamental group in this paper is a
natural adaptation to the homotopy setting of the interpretation of
the Novikov complex (and the Novikov Homology itself as well in many
cases, see \cite{Sikorav1987} and \cite{Usher2008}) as the
projective limit of chain complexes relative to sublevels.

A very similar definition appears for higher homotopy groups in
\cite{FGS2010}, but the fundamental group is not discussed, nor a
fortiori the number of generators and relations and their relation
to critical points of Morse $1$-forms in the class $\alpha$.

In \cite{Latour1994}, Latour defines several spaces that are
interesting to compare to the present construction. First, he
considers (\cite[5.7, p. 184]{Latour1994}) the group
$\pi_{1}^{\text{Latour}}(u)$ of loops that can be ``slid to
$-\infty$'' in the minimal integration cover. Despite similar
notations, this group is somewhat orthogonal to our construction
since such loops are automatically trivial in our construction. The
closest notion considered by Latour is the $\pi_{0}$ of the space of
``paths going to $-\infty$''. The Novikov fundamental group we
define here can be thought of as a Novikov completion of the group
generated by this space.

\bigskip

The paper is organized as follows~: the first section is this
introduction, the second section is devoted to the construction of
the Novikov fundamental group and the definition of a suitable
notion of generators and relations, the third to the Morse
interpretation of it and the proof of Theorem \ref{thm:Main}, the
fourth to the discussion of a Novikov version of the Hurewicz
morphism, and finally the last one to the discussion of an example
and the proof theorem \ref{thm:exple}.

\section{Novikov fundamental group}

\subsection{Projective limit with respect to sublevels}

Let $M$ be a closed smooth manifold, $u\in H^{1}(M,\R)$ a non
trivial cohomology class, and $\alpha\in u$ a closed $1$-form in the
class $u$.

Let $p:\Cov{M} \to M$ be an integration cover of $u$ (see Remark
\ref{rk:ChoiceOfCoveringSpace}) and $\Deck =
\pi_{1}(M)/\pi_{1}(\Cov{M})$ the associated deck transformation
group.

Notice that for the minimal integration cover, we have $\pi_1
(\Cov{M}) = \ker u$, and $\Deck = \Z^k$, where $k$ is the
irrationality degree of $u$. In particular, $k =1$ if $u$ is
integral, i.e. if $u \in H^1(M, \Z)$.

The $1$-form $p^*(\alpha)$ on $\Cov{M}$ is exact, and we pick a
primitive $f_{\alpha}:\Cov{M}\to\R$. For $h\in\R$, we let
$$
\sublevel{\Cov{M}}{h} = \{p\in\Cov{M}, f_{\alpha}(p)\leq h\}
$$
$$
\zipped{\Cov{M}}{h} = \Cov{M}/\sublevel{\Cov{M}}{h} = \Cov{M}/\sim
$$
where $\sim$ collapses $\sublevel{\Cov{M}}{h}$ to a point~: $p\sim q
\Leftrightarrow f_{\alpha}(p)\leq h \text{ and } f_{\alpha}(q)\leq
h$. This space comes with a natural base point $\star_{h}$, given by
the collapsed sublevel $\sublevel{\Cov{M}}{h}$.

We now consider the family of groups~:
$$
\zipped{\pi_{1}(f_{\alpha})}{h} = \pi_{1}(\zipped{\Cov{M}}{h},
\star_{h}).
$$
Inclusions of sublevels induce natural maps for any $h,h'\in\R$ with
$h<h'$~:
$$
\zipped{\pi_{1}(f_{\alpha})}{h} \xrightarrow{\zip{h}{h'}}
\zipped{\pi_{1}(f_{\alpha})}{h'}
$$
Moreover, these maps are compatible with successive inclusions~: if
$h,h',h''\in\R$ are such that $h<h'<h''$, then the following diagram
is commutative~:
$$
\xymatrix{%
\zipped{\pi_{1}(f_{\alpha})}{h
}\ar^{\zip{h}{h'}}[r]\ar_{\zip{h}{h''}}@/_{2ex}/[rr]&
\zipped{\pi_{1}(f_{\alpha})}{h'}\ar^{\zip{h'}{h''}}[r] &
\zipped{\pi_{1}(f_{\alpha})}{h''}
}.
$$

As a consequence the projective limit when $h$ goes to $-\infty$ is
well defined~:
\begin{definition}
Define the Novikov fundamental group associated to $\alpha$ as the
projective limit
$$
\pi_{1}(f_{\alpha})=\varprojlim_{h}\zipped{\pi_{1}(f_{\alpha})}{h}.
$$
\end{definition}

As a projective limit, this group comes with maps $\zip{\infty}{h}$
to the ``zipped'' groups~:
$$
\pi_{1}(f_\alpha) \xrightarrow{\zip{\infty}{h}}
\zipped{\pi_{1}(f_\alpha)}{h}.
$$
and each element $g$ has a {\it minimal   height},
$h_{f_{\alpha}}(g)$, with respect to $f_{\alpha}$, defined as
$$
h_{f_{\alpha}}(g)=\inf\{h\in\R, g\in\ker \zip{\infty}{h}\}.
$$

\begin{remark}
  The fact that an element $g$ appears to be trivial above a level
  $a_{0}$ (i.e. $\zip{\infty}{a}(g)=1$) does not mean in general that for
  levels $a<a_{0}$, $g$ can be represented in
  $\zipped{\pi_{1}(f_{\alpha})}{a}$ by a loop that stays below level
  $a_{0}$.

  For instance, if $\gamma$ and $\gamma'$ are two paths running from and
  to $-\infty$, such that for some levels $a_{-}<a_{0}<a_{+}$, we have
  \begin{itemize}
  \item
    $\gamma\neq 1$ in $\zipped{\pi_{1}(f_{\alpha})}{a_{+}}$
  \item
    $\gamma'=1$ in $\zipped{\pi_{1}(f_{\alpha})}{a_{0}}$ but
    $\gamma'\neq1$ in $\zipped{\pi_{1}(f_{\alpha})}{a_{-}}$,
  \end{itemize}
  then the element $g$ defined by $\gamma\gamma'\gamma^{-1}$ is trivial
  in $\zipped{\pi_{1}(f_{\alpha})}{a_{0}}$, but representing its homotopy
  class in $\zipped{\Cov{M}}{a_{-}}$ may require a path going above level
  $a_{+}$ (see the middle picture in Figure \ref{fig:LoopSequence}).
  \end{remark}

\begin{figure}[h!]
  \centering%
  \includegraphics[width=7cm]{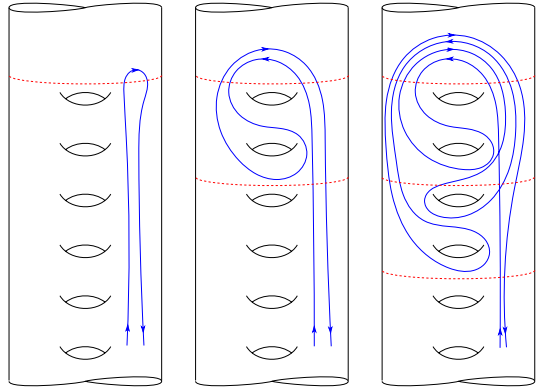}%
  \caption{
    An element in $\pi_1(\Cov{M},u)$ whose deeper and deeper
    representatives do
    not eventually become constant above a given level}%
  \label{fig:LoopSequence}
\end{figure}

In particular, if a path $\R\xrightarrow{\gamma}\Cov{M}$ such that
$\lim_{t\to\pm\infty}f_{\alpha}(\gamma(t))=-\infty$ clearly defines
an element in $\pi_{1}(\Cov{M},u)$, not all the elements can be
described in this way.

Moreover, among such paths, some are better than the others, in that
they do not go down and up too badly~:
\begin{definition}\label{def:CleanElements}
  An element $g$ in $\pi_{1}(f_{\alpha})$ will be said to be clean if it
  can be defined by a path $\R\xrightarrow{\gamma}\Cov{M}$ with
  $\lim_{t\to\pm\infty}f_{\alpha}(\gamma(t))=-\infty$ whose ``wells have
  bounded depth'', i.e. $\exists K>0,
  \forall(t_{1},t_{2})\in\R^{2},\forall h\in\R$
  $$
  f_{\alpha}(\gamma(t_{1}))=f_{\alpha}(\gamma(t_{2}))=h
  \Rightarrow
  \forall t\in[t_{1},t_{2}], f_{\alpha}(t)\geq h-K.
  $$
\end{definition}

\subsection{Invariance}

\begin{theorem}
  The group $\pi_{1}(f_{\alpha})$ does not depend on the choice of the
  $1$-form $\alpha$ nor of the primitive $f_{\alpha}$.

  More precisely, if $\alpha$ and $\beta$ are two forms in the same
  cohomology class $u\in\H^{1}(M,\R)$, $f_{\alpha}$ and $f_{\beta}$ two
   primitives of $\alpha$ and $\beta$ on $\Cov{M}$, then there is a
  canonical isomorphism
  $$
  \pi_{1}(f_{\alpha})\xrightarrow{\sim}\pi_{1}(f_{\beta}).
  $$
\end{theorem}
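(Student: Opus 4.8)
The plan is to reduce the general statement to two special cases and then handle each by an explicit homotopy of the sublevel filtrations. First I would observe that it suffices to treat separately (a) the case where $\alpha=\beta$ but $f_\alpha$ and $f_\beta$ are two different primitives of the same form, and (b) the case where $f_\alpha$ and $f_\beta$ are primitives of two forms $\alpha,\beta$ in the same class $u$ that are moreover homotopic through closed $1$-forms in $u$; since any two closed $1$-forms in the same cohomology class differ by an exact form $dg$ with $g:M\to\R$, the segment $\alpha_t=\alpha+t\,d(g)$ (with $g$ pulled back to $\Cov M$) connects them, and one can choose primitives $f_t=f_\alpha+t(g\circ p)$ varying continuously in $t$. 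So the whole theorem follows from a \emph{local} statement: a continuous family $f_t$, $t\in[0,1]$, of primitives (of the varying pullback forms) induces a canonical isomorphism $\pi_1(f_0)\xrightarrow{\sim}\pi_1(f_1)$, compatible with composition, and independent of the chosen path. Case (a) is then just the instance $g$ constant (or more precisely the straight-line homotopy between two primitives, whose difference is a constant on the connected $\Cov M$).

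Next I would make the key elementary remark that bounds the effect of changing the primitive. Since $M$ is closed, $g$ is bounded, say $\|g\circ p\|_\infty\le C$; hence for all $h$,
\[
\sublevel{\Cov M}{h-C}_{f_1}\subset \sublevel{\Cov M}{h}_{f_0}\subset \sublevel{\Cov M}{h+C}_{f_1},
\]
and similarly with the roles reversed. Collapsing the relevant sublevels, these inclusions descend to basepoint-preserving maps $\zipped{\Cov M}{h-C}_{f_1}\to\zipped{\Cov M}{h}_{f_0}\to\zipped{\Cov M}{h+C}_{f_1}$, which are \emph{interleaved} with the structure maps $\zeta$ of each projective system. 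On $\pi_1$ one gets interleaving morphisms between the two inverse systems $\{\zipped{\pi_1(f_0)}{h}\}$ and $\{\zipped{\pi_1(f_1)}{h}\}$ (reindexed by the shift $h\mapsto h\pm C$). A general lemma on interleaved inverse systems — two systems with cofinal families of maps in both directions whose composites equal the transition maps — then yields a canonical isomorphism of the projective limits. This produces the map $\pi_1(f_0)\xrightarrow{\sim}\pi_1(f_1)$ at the level of limits. I would spell this out as: an element of $\varprojlim_h\zipped{\pi_1(f_0)}{h}$ is a coherent sequence; push each term forward by the inclusion-induced map to get a coherent sequence for $f_1$; coherence is exactly the commutativity of the triangles built from the two displayed inclusions, which holds because inclusions of topological pairs compose. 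Bijectivity follows by doing the same in the other direction and noting the round trips are shifts by $2C$, which act as the identity on the limit.

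Then I would address the remaining content of ``canonical'': the isomorphism must not depend on the path $\alpha_t$ (equivalently on $g$, since only $\alpha-\beta=dg$ and the constant of integration matter) and must be natural for composition of such changes, so that chaining $\alpha\to\beta\to\gamma$ gives the composite isomorphism. For composition this is immediate from the construction, since the interleaving maps for $f_\alpha\to f_\gamma$ are the composites of those for $f_\alpha\to f_\beta$ and $f_\beta\to f_\gamma$ up to a harmless reindexing. For path-independence, the point is that the isomorphism $\pi_1(f_0)\to\pi_1(f_1)$ depends only on the inclusion relations among sublevels, which in turn depend only on the \emph{endpoints} $f_0,f_1$ (the bound $C$ can be taken to be $\|f_0-f_1\|_\infty$), not on the intermediate $f_t$; so two homotopies with the same endpoints give literally the same interleaving data, hence the same limit isomorphism. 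One still has to check that when $\alpha=\beta$ and $f_0=f_1$ the construction gives the identity — true because then the inclusions are equalities and the interleaving maps are the identity.

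The main obstacle I anticipate is making the ``interleaved inverse systems'' argument fully rigorous at the level of \emph{groups} with their $\Deck$-action rather than just sets: one needs the inclusion-induced maps $\zipped{\pi_1(f_1)}{h-C}\to\zipped{\pi_1(f_0)}{h}$ to be well-defined group homomorphisms (clear, they are induced by continuous maps of pointed spaces) and to intertwine the $\Deck$-actions (this uses that $g\circ p$ is $\Deck$-invariant, since $g$ is defined downstairs on $M$ — for the minimal cover this is automatic, and in general one must check the chosen primitive's correction term descends, which it does precisely because $\alpha-\beta$ is pulled back from $M$). A secondary subtlety is the basepoint: the collapsed sublevel for $f_0$ at height $h$ sits \emph{inside} the collapsed sublevel for $f_1$ at height $h+C$, so the basepoint of the former maps to the basepoint of the latter, and the maps are genuinely pointed; I would state this carefully to be sure the $\pi_1$'s are compared with compatible basepoints. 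Apart from these bookkeeping points — the $\Deck$-equivariance and the pointedness — the argument is a formal diagram chase, so I do not expect genuine difficulty, only the need for care.
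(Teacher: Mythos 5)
Your proof is correct and uses essentially the same mechanism as the paper: the difference $f_\beta-f_\alpha$ descends to the compact $M$, hence is bounded by some constant $K$, giving interleaved inclusions of sublevels $\sublevel[f_{\alpha}]{\Cov{M}}{h-K}\subset\sublevel[f_{\beta}]{\Cov{M}}{h}\subset\sublevel[f_{\alpha}]{\Cov{M}}{h+K}$ and hence interleaved inverse systems whose projective limits are canonically isomorphic. Your preliminary reduction to a homotopy of forms is harmless but unnecessary scaffolding (as you yourself observe when noting the isomorphism depends only on the endpoints), and your additional checks on canonicity, basepoints, and $\Deck$-equivariance are correct refinements of the paper's terser argument.
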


\begin{definition}
  The identification of all these groups via these canonical isomorphism
  defines a group $\pi_{1}(\Cov{M},u)$ which we call the Novikov
  fundamental group associated to $u$ (and the integration cover
  $\Cov{M}$).
\end{definition}

\begin{remark}\label{rk:Invariance}
  The proof of the invariance will in fact give a little more~: the group
  itself is invariant, but it also comes with a preferred collection of
  projective systems defining it.

  The argument below based on reciprocal inclusions of sublevels
  associated to different forms in the class $u$ will be reused several
  times and will be referred to as the invariance argument in the sequel,
  and we will say that projective systems satisfying the relation
  \eqref{eq:invariance-diagram} for some constant $K$ are ``essentially
  equivalent''.
\end{remark}

\begin{proof}
  Since $\alpha-\beta$ is exact on $M$, there is a function $f$ on $M$
  such that $f_{\beta}=f_{\alpha}+f\circ\pi$, where $\pi:\Cov{M}\to M$ is
  the projection. Since $M$ is compact, $f$ is bounded and there is a
  constant $K$ such that
  $$
  \forall p\in\Cov{M}, f_{\alpha}(p)-K\leq f_{\beta}(p)\leq f_{\alpha}(p)+K.
  $$

  As a consequence, for all $h\in\R$, we have the following inclusions of sublevels
  $$
  \sublevel[f_{\alpha}]{\Cov{M}}{h-K}\subset
  \sublevel[f_{\beta}]{\Cov{M}}{h}\subset
  \sublevel[f_{\alpha}]{\Cov{M}}{h+K}
  $$
  which induce morphisms among relative fundamental groups, and make the
  following diagram commutative~:
  \begin{equation}
    \label{eq:invariance-diagram}
    \xymatrix{ \dots\ar[r] &
      \zipped{\pi_{1}(f_{\alpha})}{h-K}     \ar[dr]\ar[rr]
               && \zipped{\pi_{1}(f_{\alpha})}{h+K}     \ar[dr]\ar[r] & \dots\\
    \dots\ar[rr]\ar[ur]
               && \zipped{\pi_{1}(f_{\beta })}{h  }     \ar[ur]\ar[rr]
               && \dots\\
  }
  \end{equation}
  which in turn induce an isomorphism on the projective limit of the
  fundamental groups.
\end{proof}

\subsection{Deck transformations}

For any $x \in \Cov{M}$  and any $\tau \in \Deck$ we have
\begin{equation*}
%\label{eq:deck}
f_\alpha( \tau \circ x) = f_\alpha(x) + u(\tau),
\end{equation*}
so that deck transformations send sublevels to sublevels~:
$$
\tau\cdot\sublevel{\tilde {M}} {h} = \sublevel{\Cov{M}}{h+u(\tau)}.
$$
Hence $\tau$ also acts on $\pi_{1}(f_\alpha)$. As a consequence, for
all $h\in\R$, we have an isomorphism
$$
\zipped{\pi_{1}(f_{\alpha})}{h}\xrightarrow{\tau}
\zipped{\pi_{1}(f_{\alpha})}{h+u(\tau)}.
$$
This induces an isomorphism on the projective limit
$$
\pi_{1}(f_{\alpha})\xrightarrow{\tau} \pi_{1}(f_{\alpha})
$$
which finally defines an action of $\Deck$ on $\pi_{1}(f_{\alpha})$.

\subsection{Functoriality of the Novikov  fundamental group}\label{sec:funct}
Here we prove theorem \ref{thm:funct}.

We start with an arrow $(\Cov{M_{1}} \to M_{1},u_{1})
\xrightarrow{\varphi} (\Cov{M_{2}} \to M_{2},u_{2})$ i.e. with a
commutative diagram
$$
\xymatrix{
  \Cov{M_{1}}\ar_{\pi_{1}}[d]\ar^{\tilde{\varphi}}[r]&
  \Cov{M_{2}}\ar^{\pi_{2}}[d]%\ar^{f_{\alpha}}[r]&\mathbb R
  \\
  M_{1}\ar^{\varphi}[r]&
  M_{2}
}
$$
such that $\varphi^{*}(u_{2})=u_{1}$.

Consider a $1$-form $\alpha_{2}$ in the class $u_{2}$ and a
primitive $f_{2}$ of $\pi_{2}^{*}\alpha_{2}$ on $\Cov{M_{2}}$.

Then $f_{1} = \tilde{\varphi}^{*}f_{2}$ satisfies
$$
df_{1} = d(\tilde{\varphi}^{*} f_{2}) = \tilde{\varphi}^{*}df_{2} =
\tilde{\varphi}^{*}\pi_{2}^{*}\alpha_{2} =
\pi_{1}^{*}\varphi^{*}\alpha_{2},
$$
and hence is a primitive of $\pi_{1}^{*}\alpha_{1}$, where
$\alpha_{1} = \varphi^{*}\alpha_{2}$ is a one form in the cohomology
class $u_{1}$. Moreover, the sublevels associated to $f_{1}$ are the
preimages of those associated to $f_{2}$. In particular,
\begin{equation*}
  \tilde{\varphi}(\{f_{1}\leq h\}) \subset \{f_{2}\leq h\},
\end{equation*}
and $\varphi$  induces  a homomorphism
\begin{equation*}
  \zipped{\varphi_{*}}{h}: \zipped{\pi_1 (f_{1})}{h} \to
  \zipped{\pi_1(f_{2})}{h}.
\end{equation*}
The collection $(\zipped{\varphi_{*}}{h})_{h\in\R}$ forms an inverse
system morphism, and in the limit, we obtain a group morphism~:
\begin{equation}
  \label{eq:phi*Novikov}%
  \varphi_{*} : \pi_1 (\Cov{M}_{1},u_{1}) \to
  \pi_1 (\Cov{M}_{2}, u_{2}).
\end{equation}

Expressing the compatibility of $\varphi_{*}$ with deck
transformations requires some more discussion of the latter.

Let $\Deck_{\Cov{M}_{i}}$ be the deck transformation group of
$\Cov{M}_{i}$. Observe that since $\Cov{M}_{2}$ is connected, there
a is natural morphism
$$
\Deck_{\Cov{M}_{1}}\xrightarrow{\varphi_{\star}}\Deck_{\Cov{M}_{2}}.
$$
Indeed, consider a point $p\in\Cov{M_{1}}$ and a loop
$\gamma:[0,1]\to M_{1}$ based at $\pi(p)$, representing a homotopy
class $g_{1}\in\pi_{1}(M_{1})$. Let $\tilde{\gamma}$ be the lift of
$\gamma$ at $p$, and $\widetilde{\varphi(\gamma)}$ the lift of
$\varphi_{*}\gamma$ at $\tilde\varphi(p)$. Then $\tilde{\gamma}(1) =
g\cdot p$ and $\tilde{\varphi}(\tilde{\gamma}(1)) =
(\widetilde{\varphi_{*}\gamma})(1)$, which means that
\begin{equation}
  \label{eq:phideck}
\tilde\varphi(g\cdot p) = \varphi_{*}(g)\cdot \tilde\varphi(p).
\end{equation}
In particular, if $g$ and $g'$ in $\pi_{1}(M_{1})$, have the same
action on $\Cov{M_{1}}$, then $\varphi_{*}(g)$ and $\varphi_{*}(g')$
have the same action at least on a point in $\Cov{M_{2}}$, and since
$\Cov{M_{2}}$ is connected, they have the same action on the whole
of $\Cov{M}_{2}$.

This means that
$\pi_{1}(\Cov{M_{1}})\xrightarrow{\varphi_{*}}\pi_{1}(\Cov{M}_{2})$
induces a map
$\Deck_{\Cov{M_{1}}}\xrightarrow{\varphi_{*}}\Deck_{\Cov{M}_{2}}$
for which we have~:
$$
\forall g\in\Deck_{\Cov{M}_{1}},\ \tilde{\varphi}\circ g =
\varphi_{*}(g)\circ \tilde{\varphi}.
$$

With this equality we have the following commutative diagram
$$
\xymatrix{
  \sublevel{\Cov{M_{1}}} {h} \ar_{g}[d]\ar^{\tilde{\varphi}}[r]&
  \sublevel{\Cov{M_{2}}} {h}\ar^{\varphi_{*}(g)}[d]
  \\
  \sublevel{\Cov{M_{1}}} {h+u_1(g)}\ar^{\tilde{\varphi}}[r]&
  \sublevel{\Cov{M_{2}}} {h+u_1(g)}
}
$$
with $\sublevel{\Cov{M_{2}}} {h+u_1(g)}=\sublevel{\Cov{M_{2}}}
{h+u_2(\varphi_{*}(g))}$, that induces the commutative diagram on
the Novikov $\pi_1$.

This ends the proof of point (1) of theorem \ref{thm:funct}. Point
(2) follows along the same lines and is no more difficult.

\subsection{Generators and relations}

\subsubsection{Generators up to deck transformations and completion}
In general, $\pi_{1}(\Cov{M},u)$ need not be finitely generated in
the usual sense. The object of this section is to define a suitable
notion of generators that takes both deck transformations and
projective limits into account, for which $\pi_{1}(\Cov{M},u)$ will
be finitely generated.

Given a subset $A\subset\pi_{1}(\Cov{M},u)=\pi_{1}(f_{\alpha})$,
consider its orbit $\Deck\cdot A=\{\tau\cdot g, \tau\in\Deck, g\in
A\}$ under all possible deck transformations, and the subgroup
$\PreSpan{A}$ generated by $\Deck\cdot A$ in $\pi_{1}(f_{\alpha})$.
For each $h\in\R$, the image of this subgroup in
$\zipped{\pi_{1}(f_{\alpha})}{h}$ defines a group
$$
\zipped{\PreSpan{A}}{h}=\zip{-}{h}(\PreSpan{A})\subset\zipped{\pi_{1}(f_{\alpha})}{h}
$$
making the following diagrams commutative
$$
\xymatrix{%
%A \ar[r]{}& B \ar[r]{} & C
\zipped{\PreSpan{A}}{h  }\ar[r]^{\zip{h
}{h'}}\ar@/_{2ex}/[rr]_{\zip{h}{h''}}& \zipped{\PreSpan{A}}{h'
}\ar[r]^{\zip{h'}{h''}} & \zipped{\PreSpan{A}}{h''}
}.
$$

\begin{definition}
  Define the subgroup generated by $A$ up to deck transformations and completion as
  the group
  $$
  \Span{A} =
  \varprojlim_{h}\zipped{\PreSpan{A}}{h}.
  $$
\end{definition}

\begin{remark}
  Notice that $\Span{A}$ is bigger in general than the subgroup
  $\PreSpan{A}$, since the latter involves only (arbitrary long but)
  finite products of elements of $\Deck\cdot A$, while the limit process
  allows for infinite products.

  In the sequel, generated subgroups will always be understood as
  generated up to deck transforms and completion.
\end{remark}

\begin{proposition}
  The group $\Span{A}$ generated by $A$ in the sense above only depends on the
  class $u$, and not on the choice of $\alpha$ nor $f_{\alpha}$ used to
  define it.
\end{proposition}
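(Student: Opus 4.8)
The plan is to reduce everything to the invariance argument of Remark~\ref{rk:Invariance}, and simply to check that it restricts to the subsystems defining $\Span{A}$. The first point I would make is that the ``algebraic part'' is already intrinsic: by the invariance theorem $\pi_{1}(f_{\alpha})$ is canonically identified with $\pi_{1}(\Cov{M},u)$, and by the discussion of deck transformations this identification intertwines the $\Deck$-actions. Hence, for a fixed subset $A\subset\pi_{1}(\Cov{M},u)$, both the orbit $\Deck\cdot A$ and the subgroup $\PreSpan{A}$ it generates in $\pi_{1}(f_{\alpha})$ are independent of $\alpha$ and $f_{\alpha}$. So the only thing that could depend on the choices is the completion step, i.e. the passage from $\PreSpan{A}$ to $\Span{A}=\varprojlim_{h}\zip{\infty}{h}(\PreSpan{A})$.

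Next I would run the invariance argument on these subgroups. Fix two closed $1$-forms $\alpha,\beta$ in $u$, primitives $f_{\alpha},f_{\beta}$, and a constant $K$ with $f_{\alpha}-K\le f_{\beta}\le f_{\alpha}+K$ as in the proof of the invariance theorem; the inclusions $\sublevel[f_{\alpha}]{\Cov{M}}{h-K}\subset\sublevel[f_{\beta}]{\Cov{M}}{h}\subset\sublevel[f_{\alpha}]{\Cov{M}}{h+K}$ give the interleaving diagram \eqref{eq:invariance-diagram}, whose diagonal maps $a_{h}\colon\zipped{\pi_{1}(f_{\alpha})}{h-K}\to\zipped{\pi_{1}(f_{\beta})}{h}$ and $b_{h}\colon\zipped{\pi_{1}(f_{\beta})}{h}\to\zipped{\pi_{1}(f_{\alpha})}{h+K}$ are the inclusion-induced morphisms. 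By the very way the canonical isomorphism $\pi_{1}(f_{\alpha})\xrightarrow{\sim}\pi_{1}(f_{\beta})$ is obtained --- both systems being cofinal in one interleaved projective system whose limit carries compatible projections to all its terms --- one has, after this identification, $a_{h}\circ\zip{\infty}{h-K}=\zip{\infty}{h}$ and $b_{h}\circ\zip{\infty}{h}=\zip{\infty}{h+K}$. Evaluating these identities on the single intrinsic subgroup $\PreSpan{A}$ yields
\begin{equation*}
  a_{h}\bigl(\zipped{\PreSpan{A}}{h-K}\bigr)=\zipped{\PreSpan{A}}{h},\qquad b_{h}\bigl(\zipped{\PreSpan{A}}{h}\bigr)=\zipped{\PreSpan{A}}{h+K},
\end{equation*}
each bracket being formed with the appropriate primitive. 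Thus \eqref{eq:invariance-diagram} restricts to an interleaving of the two projective systems $\bigl(\zipped{\PreSpan{A}}{h}\bigr)_{h}$ attached to $f_{\alpha}$ and to $f_{\beta}$; they are essentially equivalent in the sense of Remark~\ref{rk:Invariance}, so their projective limits are canonically isomorphic, the isomorphism being the restriction of $\pi_{1}(f_{\alpha})\xrightarrow{\sim}\pi_{1}(f_{\beta})$. This is exactly the asserted independence, and the same bookkeeping handles the independence from the primitive alone (take $\beta=\alpha$).

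I expect the only genuine subtlety to be the compatibility identity $a_{h}\circ\zip{\infty}{h-K}=\zip{\infty}{h}$, which is however nothing more than the observation already used to build the invariance isomorphism; the remainder is routine. The one thing I would be careful about is to keep $\zipped{\PreSpan{A}}{h}$ defined as the plain image $\zip{\infty}{h}(\PreSpan{A})$ of the fixed subgroup $\PreSpan{A}$, rather than as ``the subgroup of $\zipped{\pi_{1}(f_{\alpha})}{h}$ generated by $\Deck\cdot\zip{\infty}{h}(A)$'': the latter description would drag in the height shift built into the $\Deck$-action on the quotients $\zipped{\pi_{1}(f_{\alpha})}{h}$, which is irrelevant to the statement and would only obscure the argument.
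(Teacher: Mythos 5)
Your proof is correct and follows essentially the same route as the paper: the paper's proof consists precisely of restricting the interleaving diagram \eqref{eq:invariance-diagram} to the subgroups $\zipped{\PreSpan{A}}{h}$ and passing to projective limits. The only difference is that you spell out why the diagonal maps carry one system of images onto the other (via the compatibility $a_{h}\circ\zip{\infty}{h-K}=\zip{\infty}{h}$ applied to the intrinsic subgroup $\PreSpan{A}$), a point the paper leaves implicit.
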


\begin{proof}
Suppose $\alpha$ and $\beta$ are two 1-forms in the class $u$,
$f_{\alpha}$ and $f_{\beta}$ two primitives of $\alpha$ and $\beta$
on $\tilde{M}$.

Let $K=\Vert f_{\alpha}-f_{\beta}\Vert_{\infty}$. Then the inclusion
of sublevels induces, for any level $h\in\R$, the following
commutative diagram
$$
  \xymatrix@!C=5em{
    \dots\ar[r] & \zipped{\PreSpan{A}}{f_{\alpha}\leq h-K}     \ar[dr]\ar[rr]
               && \zipped{\PreSpan{A}}{f_{\alpha}\leq h+K}     \ar[dr]\ar[r] & \dots\\
    \dots\ar[rr]\ar[ur]
               && \zipped{\PreSpan{A}}{f_{\beta }\leq h  }     \ar[ur]\ar[rr]
               && \dots\\
  }
$$
 which again induces an isomorphism of projective limits.
\end{proof}

\begin{remark}
  The invariance principle used in this proof shows that the notion of
  ``clean'' elements defined in \ref{def:CleanElements} does not depend
  on the choice of $\alpha$ nor $f_{\alpha}$.
\end{remark}

\subsubsection{Relations  up to deck transformations and completion}
\label{sec:Relations}%
To define a notion of relation, we need a notion of free group
generated by some elements, that still takes the deck
transformations and the completion process into account.

Given a finite set $A=\{g_{1},\dots, g_{k}\}\subset
\pi_{1}(f_{\alpha})$, we consider the product $\Deck\times A$ and
denote by $\PreFree{A}$ the group freely generated by its elements.
Notice it supports an action of the deck transformations group, by
multiplication of the letters in a word~:
\begin{gather*}
  \tau\cdot(w_{1}\dots w_{k})= (\tau\cdot w_{1})\dots (\tau\cdot w_{k})
  \intertext{ and for each letter $w_{i}=(\tau_{i},g_{i})$}
  \tau\cdot(\tau_{i},g_{i}) = (\tau\tau_{i},g_{i}).
\end{gather*}

The height
\begin{equation}
  \label{eq:height}%
  h_{f_{\alpha}}(g)=\inf\{h\in\R,
  \zip{\infty}{h}(g)=1\in\zipped{\pi_{1}(f_{\alpha})}{h}\}
\end{equation}
of elements in $\pi_{1}(f_{\alpha})$ extends to $\PreFree{A}$ at the
level of letters by letting
$$
h_{f_{\alpha}}(\ (\tau,g)\ )=u(\tau)+h_{f_{\alpha}}(g) \quad\text{
and }\quad h_{f_{\alpha}}(\ w^{-1}\ )=h_{f_{\alpha}}(w)
$$
and at the level of words by letting
$$
h_{f_{\alpha}}(w_{1}\dots w_{k})=\sup_{1\leq i\leq
k}\{h_{f_{\alpha}}(w_{i})\}.
$$

\begin{remark}\label{rk:ArbitraryHeight}
  In fact, as long as there are finitely many letters, the height of the
  letters could be fixed arbitrarily.
\end{remark}

Now, for each level $h$ we select the subset
$$
\PreFree{A}^{\leq h}= \{w\in \PreFree{A}, h_{f_{\alpha}}(w)\leq h\},
$$
and consider  the group $\zipped{\PreFree{A}}{h} =
\PreFree{A}/\PreFree{A}^{\leq h}$.

Observe now that for $h<h'<h''$, we have compatible morphisms, still
denoted by $\zip{}{}$~:
$$
\xymatrix{%
\zipped{\PreFree{A}}{h
}\ar^-{\zip{h}{h'}}[r]\ar_-{\zip{h}{h''}}@/_{2ex}/[rr]&
\zipped{\PreFree{A}}{h' }\ar^-{\zip{h'}{h''}}[r] &
\zipped{\PreFree{A}}{h''}
}
$$
which map all the letters whose height is too small to $1$.

\begin{definition}
Define the group freely generated by $A$ up to deck transformations
and completion by
$$
\Free{A}=\varprojlim_{h} \zipped{\PreFree{A}}{h}.
$$
\end{definition}

The evaluation map sending each letter $g_{i}$ to the element
$g_{i}\in\pi_{1}(f_{\alpha})$ induces morphisms
\begin{equation}
  \label{eq:eval_h}
  \begin{array}{ccc}
    \zipped{\PreFree{A}}{h}&\xrightarrow{\eval_{h}}&\zipped{\pi_{1}(f_{\alpha})}{h+K}\\
    \prod_{j=1}^{N}(\tau_{j},g_{i_{j}})&\mapsto& %
    \zip{\infty}{h+K}(\prod_{j=1}^{N}\tau_{j}\cdot g_{i_{j}} )
\end{array}
\end{equation}
where $K$ is a constant, that can be taken to be $1$ when the
convention \eqref{eq:height} is used to define the height, but more
generally satisfies~:
$$
\forall i\in\{1,\dots,m\}: h_{f_{\alpha}}(g_{i})=h \Rightarrow
\zip{\infty}{h+K}(g_{i})=1.
$$
Finally, the applications $\eval_{h}$ induce a morphism~:
\begin{equation}\label{eq:evalFreeToPi1}
  \xymatrix{ \Free{A} \ar^-{\eval}@{>>}[r]& \Span{A}\subset
    \pi_{1}(f_{\alpha}). }
\end{equation}

\begin{definition}
  Define the group $\Rel(A)$ of relations associated to a generating
  family $A$ of $\pi_{1}(f_{\alpha})$ up to DTC as the kernel of
  $$
  \Free{A} \xrightarrow{\eval} \pi_{1}(f_{\alpha}).
  $$
\end{definition}

\begin{remark}
  This kernel is compatible with the filtration, i.e.
  $$
  \forall w\in\Free{A}, w\in\ker \eval
  \Leftrightarrow
  \forall h\in\Z, \zip{\infty}{h}(w)\in\ker\eval_{h} .
  $$
\end{remark}

The relations are not finitely generated in general, but there is
again a notion of subgroup ``normally'' generated up to DTC by a
collection of elements~: given a set $B\subset\Rel(A)$, define
$B'=\{g b g^{-1}, g\in \Free{A}, b\in B\}$  and let
$$
\NormalSpan{B}=\Span{B'}
$$
be the subgroup generated up to DTC by all the conjugates of
elements of $B$.

Given a set $A\subset\pi_{1}(f_{\alpha})$ of generators of
$\pi_{1}(f_{\alpha})$ up to DTC, we let
$$
\minrel(A)= \inf\{\sharp B, B\subset \Rel(A) \text{ with } \Rel(A) =
\NormalSpan{B}\} \in\bar{\R}
$$

\begin{definition}
  Let
  $$
  \mingen(\pi_{1}(f_{\alpha})) = \inf\{\sharp A, A\subset \pi_{1}(f_{\alpha}),
  \pi_{1}(f_{\alpha}) \text{ is generated by } A \text{ up to DTC}\}
  $$
  and
  $$
  \minrel(\pi_{1}(f_{\alpha})) = \inf\{\minrel(A), A\subset \pi_{1}(f_{\alpha}),
  \pi_{1}(f_{\alpha}) \text{ is generated by } A \text{ up to DTC}\}
  $$
  denote the minimal number of generators and relations (respectively) of
  $\pi_{1}(f_{\alpha})$.
\end{definition}

\begin{remark}
  The invariance argument (see remark \ref{rk:Invariance}) shows that the
  numbers $\mingen(\pi_{1}(f_{\alpha}))$ and
  $\minrel(\pi_{1}(f_{\alpha}))$ do not depend on the choices of $\alpha$
  and $f_{\alpha}$, but only on $\pi_{1}(\Cov{M},u)$.
\end{remark}

The object of the next section is to prove the following~:
\begin{theorem}\label{thm:main}
  The group $\pi_{1}(\Cov{M}, u)$ is finitely generated up to deck
  transformations and completion, and for all Morse $1$-form $\alpha$ in
  the cohomology class $u$, we have
  $$
  \sharp(\Crit_{1}(\alpha))\geq \mingen(\pi_{1}(f_{\alpha})).
  $$
  Moreover, the relations are also finitely generated up to deck
  transformations and completion, and
  $$
  \sharp(\Crit_{2}(\alpha))\geq \minrel(\pi_{1}(f_{\alpha})).
  $$
\end{theorem}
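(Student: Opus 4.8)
The plan is to build an explicit CW-like handle decomposition of $\Cov{M}$ from a Morse $1$-form $\alpha$ and read off generators and relations for $\pi_1(\Cov{M},u)$ from the index $1$ and index $2$ critical points respectively. Fix a Morse closed $1$-form $\alpha$ in the class $u$, lift it to an exact form $df_\alpha$ on $\Cov{M}$, and arrange (after a small perturbation, using a negative pseudo-gradient) that $f_\alpha$ is self-indexing in the usual sense: critical points of index $k$ have $f_\alpha$-value roughly $k$, up to translates by $\Deck$. The sublevel sets $\sublevel{\Cov{M}}{h}$ then change, as $h$ crosses a critical value, by attaching a handle; and $\zipped{\Cov{M}}{h}$, which collapses the sublevel to a point, changes by a correspondingly simple homotopy operation. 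The key point is that for a fixed window of heights $[a,b]$ containing exactly the critical values in a fundamental domain for the $\Deck$-action, the space $\zipped{\Cov{M}}{b}$ is built from $\zipped{\Cov{M}}{a}$ (a point, hence a wedge of cells relative to the basepoint) by attaching, for each index $1$ critical point, a $1$-cell, and for each index $2$ critical point, a $2$-cell. All other handles (index $0$: irrelevant after collapsing; index $\geq 3$: do not affect $\pi_1$) can be ignored.

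\textbf{First} I would make precise the relationship between the relative fundamental groups $\zipped{\pi_1(f_\alpha)}{h}$ and the handle attachments. Crossing an index $1$ critical value adds a free generator to $\pi_1$ of the zipped space; crossing an index $2$ critical value adds one relation. Because the critical points come in $\Deck$-orbits, the generators and relations produced in one fundamental domain $[a,b]$, together with all their $\Deck$-translates, generate (resp. normally generate) $\zipped{\pi_1(f_\alpha)}{b}$ up to the subtlety that deeper windows reintroduce the same picture shifted down. This is exactly where the ``up to deck transformations and completion'' bookkeeping enters: let $A$ be the finite set of elements of $\pi_1(f_\alpha)$ represented by the cores of the index $1$ handles in the fundamental domain (more precisely, by paths running from $-\infty$ up over the index-$1$ critical point and back down to $-\infty$ along flow lines — these are \emph{clean} elements in the sense of Definition \ref{def:CleanElements}). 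I claim $\Span{A} = \pi_1(f_\alpha)$. At each finite level $h$, the group $\zipped{\pi_1(f_\alpha)}{h}$ is generated, as an ordinary group, by the $1$-handle cores lying in $(-\infty,h]$, which form finitely many $\Deck$-orbits, hence lie in $\PreSpan{A}$ modulo the collapsed sublevel; passing to the projective limit gives $\pi_1(f_\alpha)\subset\Span{A}$, and the reverse inclusion is clear. This proves $\mingen(\pi_1(f_\alpha))\leq \sharp\Crit_1(\alpha)/\!\!\sim$, i.e. the number of $\Deck$-orbits of index $1$ critical points, which after accounting for the definition of $\sharp$ gives $\sharp(\Crit_1(\alpha))\geq\mingen(\pi_1(f_\alpha))$.

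\textbf{Next}, for the relations, I would take $B$ to be the finite set of words in $\Free{A}$ corresponding to the boundaries of the index $2$ handle cores in a fundamental domain: an index $2$ handle is attached along a circle in the sublevel set just below its critical value, and that attaching circle, expressed in terms of the $1$-handle generators, is an element of $\Rel(A)$. The standard Morse-theoretic fact that a CW structure with cells the handle cores computes the homotopy type — here applied levelwise to $\zipped{\Cov{M}}{h}$ and then passed to the inverse limit — shows that $\Rel(A)$ is normally generated up to DTC and completion by $B$, because at each level only finitely many $\Deck$-orbits of $2$-handles contribute and the higher handles are irrelevant for $\pi_1$. Hence $\minrel(A)\leq\sharp(B\text{-orbits})$ and therefore $\minrel(\pi_1(f_\alpha))\leq\sharp(\Crit_2(\alpha))$.

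\textbf{The hard part} will be making the inverse-limit arguments rigorous: one must check that the levelwise statements (``$\zipped{\pi_1(f_\alpha)}{h}$ is generated by $1$-handle cores below level $h$'', ``its relations are the $2$-handle boundaries'') are compatible with the maps $\zip{h}{h'}$ in a way that survives $\varprojlim$, and in particular that no generator or relation is ``lost at $-\infty$''. The subtlety illustrated in Figure \ref{fig:LoopSequence} — that an element trivial above some level need not have deep representatives staying below that level — means one cannot naively argue that a cofinal system of finite presentations assembles into a presentation of the limit. The resolution is to use the self-indexing structure to get uniform control: there is a constant $K$ (the maximal well-depth, cf. the invariance argument, Remark \ref{rk:Invariance}) such that the $1$-handle cores one writes down at level $h$ remain valid generators at all levels $h'<h$ after multiplying by a bounded correction; this is what forces the elements of $A$ to be clean and what makes $\Span{A}$ and $\NormalSpan{B}$ behave well under the limit. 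Threading this uniformity through both the generation and the relation statements — essentially reproving the classical ``Morse theory gives a CW structure'' in the zipped, $\Deck$-equivariant, completed setting — is the technical heart of the proof, and the rest is a matter of carefully unwinding the definitions of $\mingen$ and $\minrel$.
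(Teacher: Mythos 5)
Your overall strategy is the paper's: index $1$ critical points give generators via paths running from $-\infty$ over the critical point and back to $-\infty$ (the paper calls these Morse--Novikov steps), index $2$ critical points give relations via their attaching circles pushed down by the flow, and both statements are proved levelwise in $\zipped{\Cov{M}}{h}$ and passed to the projective limit. Two specific points, however, do not survive scrutiny as written.

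First, the self-indexing rearrangement and the ``fundamental domain window $[a,b]$'' are not available in general. When $u$ is not of rank one, $\Deck\cong\Z^{k}$ with $k\geq 2$ acts on critical values with dense orbits, so there is no interval containing exactly one representative of each $\Deck$-orbit of critical values, and ``index $k$ has value roughly $k$ up to translates'' is meaningless. Even for integral $u$, rearranging critical values of a closed $1$-form inside its cohomology class is a nontrivial matter. The paper avoids this entirely: it fixes an arbitrary regular level $h$, pushes a representing loop down by the negative gradient flow, and shows directly that the result decomposes into arcs of unstable manifolds of index $1$ points and arcs inside $\sublevel{\Cov{M}}{h}$; no ordering of critical values is ever used. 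Your window picture should be replaced by this flow argument (which you essentially also describe), after which the finiteness you need is just that $\Crit_{1}(\alpha)$ and $\Crit_{2}(\alpha)$ are finite sets on $M$. (Relatedly, the handle cores relevant to $\zipped{\pi_{1}(f_{\alpha})}{h}$ are those \emph{above} level $h$, not in $(-\infty,h]$ as you wrote; there are infinitely many of them, lying in finitely many $\Deck$-orbits, which is exactly why $\PreSpan{A}$ rather than a finitely generated subgroup appears.)

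Second, ``back down to $-\infty$ along flow lines'' is false for bounded flow lines: a descending trajectory from an index $1$ point may terminate at an index $0$ critical point $x$, not at $-\infty$. The paper handles this by choosing once and for all, for each index $0$ point, a path $\gamma_{x}$ descending to $-\infty$, extending every bounded flow line by the appropriate $\gamma_{x}$, and --- crucially --- recording the uniform bound $\kappa$ of \eqref{eq:gammaxHeightBound} on how high the $\gamma_{x}$ climb above their starting point. This constant $\kappa$ (one starts the flow argument at level $h-\kappa$ to land in $\zipped{\PreSpan{A}}{h}$) is what makes the levelwise generation statement compatible with the maps $\zip{h}{h'}$ and hence survive $\varprojlim$; it is the actual source of the ``uniform control'' your last paragraph asks for, not the self-indexing structure. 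The same extension-by-$\gamma_{x}$ device is needed when reading off the words $w_{z,h}$ from the attaching circles of index $2$ points, so that the words are compatible under $\zip{h}{h'}$ and define elements of the completed free group. With these two repairs your argument becomes the paper's proof.
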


\begin{remark}\label{rem:sharko}
   A Morse version  of Theorem \ref{thm:main}  is   stated in
   \cite[Theorem 7.10]{Sharko1993}.
\end{remark}

\begin{remark}
  The proof provides a slightly sharper result, where only ``clean''
  generators (see definition \ref{def:CleanElements}) are taken into
  account.
\end{remark}

\section{Morse theoretic interpretation}

\subsection{Morse-Novikov steps as generators}%
Suppose now that $\alpha$ is a Morse $1$-form in the cohomology
class $u\in H^{1}(M,\R)$ (with $u\neq 0$).

Pick also
\begin{itemize}
\item
  a metric $<,>$ on $M$ such that the pair $(\alpha,<,>)$ is Morse-Smale,
\item
  a primitive $f_{\alpha}$ of $\alpha$ on $\Cov{M}$,
\item
  a preferred lift $\tilde{\underbar{c}}$ to $\Cov{M}$ of each
  $\underbar{c} \in\Crit(\alpha)$~; this allows for the identification
  $$
  \Crit(f_{\alpha})=\{\tau\cdot\tilde{\underbar{c}},\
  \tau\in\Deck,\underbar{c}\in\Crit(\alpha)\}.
  $$
\item
  an arbitrary orientation on the unstable manifold of each
  $c\in\Crit(\alpha)$~; this picks a preferred orientation on the
  unstable manifolds of all the critical points of $f_{\alpha}$.
\end{itemize}

For convenience, we suppose $u\neq 0$. This allows us to pick, for
each index $0$ critical point $\underbar{x}\in\Crit_{0}(\alpha)$, an
arbitrary path $\gamma_{\tilde{\underbar{x}}}:[0,+\infty)\to\Cov{M}$
such that
\begin{itemize}
\item
  $\gamma_{\tilde{\underbar{x}}}(0)=\tilde{\underbar{x}}$
\item
  $\lim_{t\to+\infty}f_{\alpha}(\gamma_{\tilde{\underbar{x}}}(t))=-\infty$.
\end{itemize}
In fact, we can pick a loop $l$ in $M$ based at $\underbar{x}$ such
that $u(l)<0$, and lift the iterations of $l$ to $\Cov{M}$.

Notice that, letting
$$
\gamma_{\tau\cdot\tilde{\underbar{x}}} = \tau\cdot
\gamma_{\tilde{\underbar{x}}},
$$
automatically selects, for each index $0$ critical point $x$ of
$f_{\alpha}$, a preferred   path   $\gamma_{x}$ in $\Cov{M}$ from
$x$ to
$-\infty$.%

There are finitely many index $0$ critical points for $\alpha$, so
there is an upper bound $\kappa$ on the height these paths can reach
above their starting point~:
\begin{equation}\label{eq:gammaxHeightBound}
  \exists\kappa\in\R, \forall x\in\Crit_{0}(f_{\alpha}),\quad
  \gamma_{x}\subset\sublevel{\Cov{M}}{f_{\alpha}(x)+\kappa}
\end{equation}

\bigskip

To each index $1$ critical point $y$ of $f_{\alpha}$, the unstable
manifold of $y$, endowed with its preferred orientation, defines a
path $\gamma:(-\infty,+\infty)\to\Cov{M}$ such that near both ends,
we have the alternative
\begin{gather*}
\lim_{t\to -\infty}f_{\alpha}(\gamma(t))=-\infty \text{ or }
\lim_{t\to-\infty}f_{\alpha}(\gamma(t))=x_{-}\in\Crit_{0}(f_{\alpha})\\
\intertext{and} \lim_{t\to+\infty}f_{\alpha}(\gamma(t))=-\infty
\text{
  or }
\lim_{t\to+\infty}f_{\alpha}(\gamma(t))=x_{+}\in\Crit_{0}(f_{\alpha}).
\end{gather*}

In the former case, the corresponding flow line is said to be
infinite, or unbounded. In the latter case, it is said to be
bounded, and a suitable re-parameterisation (using the value of
$f_{\alpha}$ as the new parameter for instance) and concatenation
with the path $\gamma_{x_{\pm}}$ extends the flow line into a path
going to $-\infty$. Doing this continuation at one or both ends if
needed, we obtain in all cases a continuous path
\begin{equation*}
%\label{eq:gammay}
  \gamma_{y}:(-\infty,+\infty)\to\Cov{M}  \text{ with  }
  \lim_{t\to\pm\infty}f_{\alpha}(\gamma_{y}(t))=-\infty,
\end{equation*}
which we call {\it the Morse-Novikov step} associated to $y$.
\begin{figure}[h!]
  \centering \includegraphics[scale=.6]{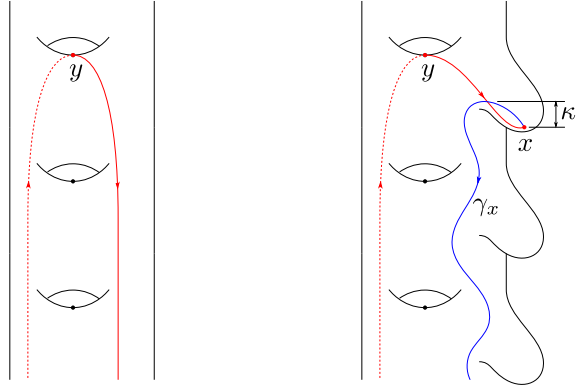}
  \caption{Morse-Novikov steps. Whenever a flow line rooted at $y$ ends
    at an index $0$ critical point $x$, it is extended with the path
    $\gamma_{x}$.} \label{fig:MorseNovikovSteps}
\end{figure}

\begin{definition}
Let $\Steps$ be the collection of Morse-Novikov steps associated to
the index $1$ critical points of $\alpha$~:
$$
\Steps = \{\gamma_{\tilde{\underbar{y}}},
\underbar{y}\in\Crit_{1}(\alpha)\}.
$$
Each such path $\gamma_{\tilde{\underbar{y}}}$ defines a class
$g_{\tilde{\underbar{y}}} = [\gamma_{\tilde{\underbar{y}}}] \in
\pi_{1}(f_{\alpha})$, and $\Steps$ will often be implicitly and
abusively considered as a finite subset in $\pi_{1}(f_{\alpha})$, by
considering the collection $\{g_{\tilde{\underbar{y}}}\}$ instead of
$\{\gamma_{\tilde{\underbar{y}}}\}$.
\end{definition}

\begin{remark}
  The Morse-Novikov steps are ``clean'' elements in the sense of
  definition \ref{def:CleanElements}.
\end{remark}

\begin{proposition}\label{prop:step}
The collection $\Steps$ is a finitely generating family (up to deck
transformations and completion) for $\pi_{1}(\Cov{M},u)$~:
$$
\pi_{1}(\Cov{M},u) = \Span{\Steps}= \Span{\{g_{\tilde{\underbar y}},
\underbar{y}\in\Crit_{1}(f_{\alpha})\}}.
$$
\end{proposition}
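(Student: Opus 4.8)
The plan is to show that the subgroup $\Span{\Steps}$ generated (up to deck transformations and completion) by the Morse--Novikov steps exhausts $\pi_1(\Cov M,u)$ by working level by level. Fix $h\in\R$; it suffices to show that $\zip{\infty}{h}\bigl(\pi_1(f_\alpha)\bigr)=\zipped{\PreSpan{\Steps}}{h}$ inside $\zipped{\pi_1(f_\alpha)}{h}$, since the projective limit of these images is the whole group. By the invariance argument (Remark \ref{rk:Invariance}), I may freely shift $h$ by bounded constants, so it is harmless to assume $h$ is a regular value of $f_\alpha$ avoiding the finitely many deck-translates of critical values in any bounded window. The key geometric input is the Morse-theoretic cell structure on $\sublevel{\Cov M}{h'}$ relative to $\sublevel{\Cov M}{h}$ for $h<h'$: attaching handles of index $k$ across the critical points of $f_\alpha$ with values in $(h,h']$. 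Since $\zipped{\Cov M}{h}$ is obtained from a CW-complex by collapsing the (connected, by the standing assumption that $\Cov M$ is connected — here one uses $u\neq 0$ so that sublevels are nonempty and, after shrinking, connected) sublevel to a point, the fundamental group $\zipped{\pi_1(f_\alpha)}{h}$ is generated by the $1$-cells, i.e. by the index $1$ handles lying above level $h$, with their ends joined to the basepoint through the $0$-skeleton.

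Concretely, I would argue as follows. First, a general loop representing a class in $\zipped{\pi_1(f_\alpha)}{h}$ can be pushed down by the gradient flow until it lies in a neighborhood of the union of the stable manifolds of critical points of index $\leq 1$ together with $\sublevel{\Cov M}{h}$; equivalently, it is homotopic rel basepoint (in $\zipped{\Cov M}{h}$) to a concatenation of: paths inside $\sublevel{\Cov M}{h}$ (trivial after collapsing), short arcs through index $0$ critical points, and the descending-manifold arcs of index $1$ critical points of $f_\alpha$ lying above $h$. This is the standard deformation argument underlying the Morse presentation of $\pi_1$. Second, each index $1$ critical point $y$ of $f_\alpha$ is a deck translate $\tau\cdot\tilde{\underbar y}$ of one of the chosen lifts, and the arc it contributes is $\tau\cdot\gamma_{\tilde{\underbar y}}$ up to the continuation through index $0$ points $\gamma_x$; comparing the two ways of continuing to $-\infty$ (the tautological descent inside the sublevel versus the chosen $\gamma_x$) only changes the class by something that dies after collapsing $\sublevel{\Cov M}{h}$, provided $h$ is low enough — which is where the height bound \eqref{eq:gammaxHeightBound} on the $\gamma_x$ enters, guaranteeing a uniform constant $\kappa$ independent of $h$. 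Thus the image of every class in $\zipped{\pi_1(f_\alpha)}{h}$ lies in $\zipped{\PreSpan{\Steps}}{h}$, and since the steps themselves are honest elements of $\pi_1(f_\alpha)$ (they are ``clean'', hence lift compatibly through all levels), we get the reverse inclusion for free. Passing to the projective limit over $h\to-\infty$ gives $\pi_1(\Cov M,u)=\Span{\Steps}$.

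The main obstacle is the bookkeeping in the deformation/handle-decomposition step: one must carefully choose the regular value $h$ (and auxiliary intermediate values) so that, on the one hand, only finitely many critical points are involved in any bounded window — this is where the Morse--Smale condition and the properness of $f_\alpha$ restricted to $f_\alpha^{-1}([h,h'])$ on the cover are used — and, on the other hand, the ambiguity introduced by the choices of $\gamma_x$ and of preferred lifts $\tilde{\underbar c}$ is absorbed by the collapsing of the sublevel. A subtlety I would flag is that a single descending arc of an index $1$ point may need to be continued at both ends through index $0$ points, and the two continuations may sit at heights as far as $\kappa$ above the starting level; choosing $h$ first and then only considering levels below $h-\kappa$ when reading off representatives resolves this, and is consistent with the projective-limit passage. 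I do not expect the finiteness claim (that $\Steps$ is a \emph{finite} generating family up to DTC) to require separate argument beyond noting $\Crit_1(\alpha)$ is finite since $M$ is compact.
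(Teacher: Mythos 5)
Your proposal is correct and follows essentially the same route as the paper's proof: push a representative loop down by the gradient flow, decompose it into arcs lying in the sublevel and arcs of unstable manifolds of index $1$ critical points, complete each arc into a Morse--Novikov step by inserting two-way trips to $-\infty$ (via the paths $\gamma_{x}$ at index $0$ points), and control heights by starting the argument at level $h-\kappa$ so that the completed sublevel pieces stay below $h$, exactly as in the paper. The only slip is the phrase ``stable manifolds of critical points of index $\leq 1$'' where you mean the unstable (descending) manifolds for the downward flow; this does not affect the argument.
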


A straightforward corollary of this proposition is the following~:
\begin{corollary}
  For all Morse $1$-form $\alpha$ in the class $u$ we have
  \begin{equation}\label{eq:Crit1LowerBound}
    \sharp\Crit_{1}(\alpha)\geq \mingen(\pi_{1}(\Cov{M},u)).
  \end{equation}
\end{corollary}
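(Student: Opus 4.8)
The plan is to read the bound off directly from Proposition~\ref{prop:step} together with the definition of $\mingen$ as an infimum over generating families; no new geometric input is required. I would begin by fixing the given Morse $1$-form $\alpha$, a compatible Morse--Smale metric, and a primitive $f_{\alpha}$ on $\Cov{M}$ as in the construction of the steps, so that $\mingen(\pi_{1}(\Cov{M},u))=\mingen(\pi_{1}(f_{\alpha}))$ by the invariance of these numbers under the choices of $\alpha$ and $f_{\alpha}$ (see Remark~\ref{rk:Invariance}).

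First I would record the elementary counting observation. The collection $\Steps=\{g_{\tilde{\underbar{y}}},\ \underbar{y}\in\Crit_{1}(\alpha)\}$ of Morse--Novikov steps is, by its very construction, a finite subset of $\pi_{1}(f_{\alpha})$ carrying at most one element per index $1$ critical point of $\alpha$. Hence
$$
\sharp\Steps \leq \sharp\Crit_{1}(\alpha),
$$
possibly with strict inequality should two distinct critical points give rise to the same class in $\pi_{1}(f_{\alpha})$.

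Next I would invoke Proposition~\ref{prop:step}, which states precisely that $\Steps$ generates $\pi_{1}(\Cov{M},u)$ up to deck transformations and completion, i.e. $\pi_{1}(\Cov{M},u)=\Span{\Steps}$. Thus $\Steps$ is an admissible generating family in exactly the sense entering the definition of $\mingen$, so evaluating that infimum at the particular choice $A=\Steps$ yields $\mingen(\pi_{1}(f_{\alpha}))\leq\sharp\Steps$. Combining this with the counting bound gives
$$
\mingen(\pi_{1}(\Cov{M},u))=\mingen(\pi_{1}(f_{\alpha}))\leq\sharp\Steps\leq\sharp\Crit_{1}(\alpha),
$$
which is exactly the asserted inequality \eqref{eq:Crit1LowerBound}.

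There is essentially no obstacle in this deduction, and indeed the statement is only a corollary: all of the substance has already been absorbed into Proposition~\ref{prop:step}, whose proof establishes that the index $1$ steps generate the Novikov fundamental group. The only points requiring any care here are the harmless bookkeeping that the number of steps never exceeds $\sharp\Crit_{1}(\alpha)$, and the (already justified) identification $\mingen(\pi_{1}(\Cov{M},u))=\mingen(\pi_{1}(f_{\alpha}))$ that lets one replace the intrinsic invariant by the quantity attached to the chosen primitive.
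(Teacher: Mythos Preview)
Your proposal is correct and matches the paper's reasoning: the paper states this as a ``straightforward corollary'' of Proposition~\ref{prop:step} with no further argument, and your deduction---that $\Steps$ is an admissible generating family with $\sharp\Steps\leq\sharp\Crit_{1}(\alpha)$, hence the infimum defining $\mingen$ is bounded by $\sharp\Crit_{1}(\alpha)$---is exactly the intended justification.
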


\begin{figure}[h!]
  \centering \includegraphics[scale=.9]{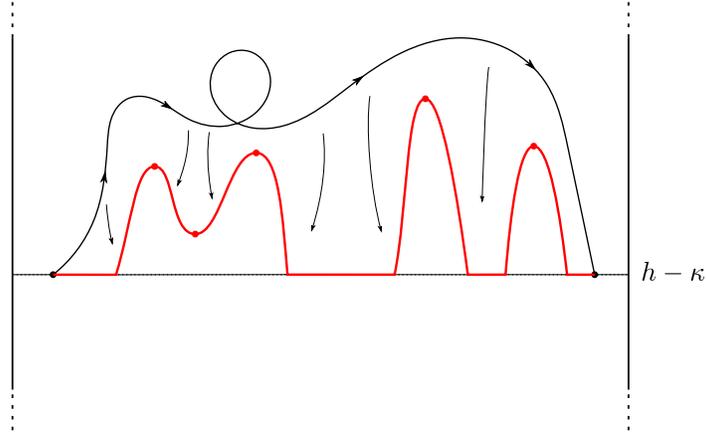} \caption{Starting
    from an arbitrary loop, flow it down...} \label{fig:HomotopyFlow}
\end{figure}
\begin{figure}[h!]
  \centering \includegraphics[scale=1.1]{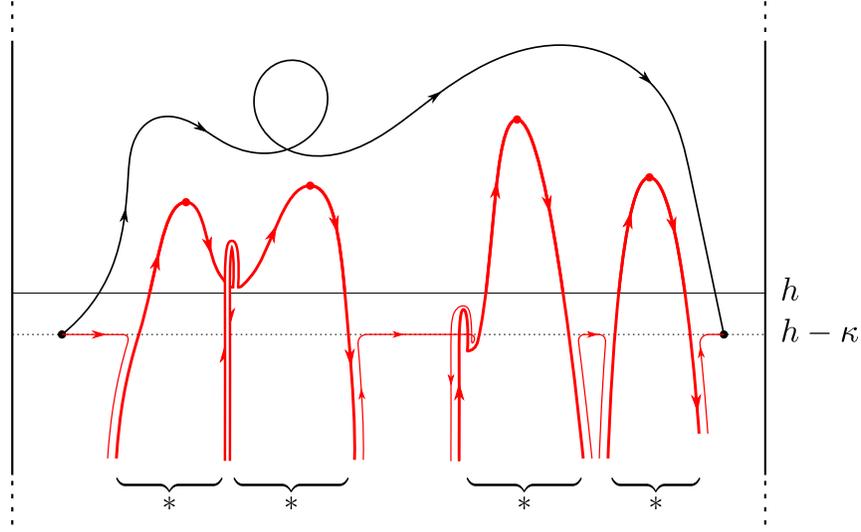} \caption{...and
    complete it to get a sequence of Morse-Novikov steps. The paths
    marked with a $(\ast)$ are Morse-Novikov steps, the other are
    contained in $\sublevel{\Cov{M}}{h}$.} \label{fig:HomotopyFinal}
\end{figure}

\begin{proof}
Fix some regular level $h$ of $f_{\alpha}$. Let
$g\in\pi_{1}(f_{\alpha})$, and
$\gamma:[0,1]\to\zipped{\Cov{M}}{h-\kappa}$ be a path representing
$\zip{}{h-\kappa}(g)$ in $\zipped{\pi_{1}(f_{\alpha})}{h-\kappa}$
(recall $\kappa$ is the constant defined in
\eqref{eq:gammaxHeightBound}).

The path $\gamma$ defines a finite collection of paths
$(\gamma_{1},\dots,\gamma_{k})$ from $[0,1]$ to $\Cov{M}$ with ends
on $\{f_{\alpha}=h-\kappa\}$.

Pushing a component $\gamma_{i}$ down by the gradient flow of
$-f_{\alpha}$ moves it either below level $h$ or onto unstable
manifolds of index $1$ critical points. More precisely, there is a
time $T$ after which the curve is contained in the union of the
sublevel $\sublevel{\Cov{M}}{h}$ and small neighborhoods of the
unstable manifolds of some index $1$ critical points. A classical
retraction argument then turns it into the concatenation
$\gamma'_{i}$ of paths that are (see figure \ref{fig:HomotopyFlow}).
\begin{enumerate}
\item
  either a path in $\sublevel{\Cov{M}}{h}$,
\item
  or a piece of unstable manifold of $y\in\Crit_{1}(f_{\alpha})$ whose
  ends are either in $\sublevel{\Cov{M}}{h}$ or some index $0$ critical
  point $x\in\Crit_{0}(f_{\alpha})$ with $f_{\alpha}(x)>h-\kappa$.
\end{enumerate}

Notice that since the initial path $\gamma_{i}$ starts and ends in
$\sublevel{\Cov{M}}{h}$, so does $\gamma'_{i}$.

To turn $\gamma'_{i}$ into a product of Morse-Novikov steps, we
finally apply the following modifications at the bottom of each
piece of flow line used in $\gamma'_{i}$~:
\begin{enumerate}
\item
  if the flow line is unbounded, we insert a two way trip down to $-\infty$
  along the flow line,
\item
  if the flow line is bounded, we insert a two way trip along the
  remaining part of the flow line if required, and along the preferred
  path $\gamma_{x}$ associated to this critical point down to $-\infty$.
\end{enumerate}

This operation completes each arc of unstable manifold in
$\gamma'_{i}$ into the associated Morse-Novikov step.

Each piece $\eta$ of $\gamma'_{i}$ that is contained in
$\sublevel{\Cov{M}}{h-\kappa}$ is now completed
\begin{itemize}
\item
  either by a flow line that goes to $-\infty$, which is necessarily
  also contained in $\sublevel{\Cov{M}}{h-\kappa}$,
\item
  or a piece of flow line down to some critical point
  $x\in\Crit_{0}(f_{\alpha})$, followed by the path $\gamma_{x}$. Since
  $x$ has to be on a lower level than the start or end of $\eta$, we have
  $f_{\alpha}(x)\leq h-\kappa$, and then by definition of $\kappa$,
  $\gamma_{x}$ cannot go higher than the level $(h-\kappa)+\kappa=h$.
\end{itemize}

As a consequence we obtain a sequence of paths from and to
$-\infty$, such that
\begin{enumerate}
\item
  each path is either a Morse-Novikov step or lies in
  $\sublevel{\Cov{M}}{h}$,
\item
  in $\zipped{\Cov{M}}{h}$, the concatenation of these paths is homotopic
  to $\gamma_{i}$.
\end{enumerate}

Finally, $\gamma'_{i}$ is homotopic in $\zipped{\Cov{M}}{h}$ the
projection of a product of Morse-Novikov steps~:
$$
\gamma'_{i} \sim \gamma_{y_{1}}^{\pm1}\cdots\gamma_{y_{k}}^{\pm1}
\quad\text{ in }\quad \zipped{\Cov{M}}{h}
$$

In particular, we obtain that the class $g$ we started with
satisfies
$$
\zip{}{h}(g)\in\zipped{\PreSpan{\Steps}}{h}.
$$
This proves that $\forall h\in\R$,
$\zipped{\pi_{1}(f_{\alpha})}{h}\subset\zipped{\PreSpan{\Steps}}{h}$
and hence
$$
\pi_{1}(f_{\alpha}) = \Span{\Steps}.
$$
\end{proof}

\subsection{Morse-Novikov relations }

Similarly, consider an index $2$ critical point
$z\in\Crit_{2}(f_{\alpha})$.

Given a level $h$ with $h<f_{\alpha}(z)$, the set
$W^{u}(z)\cap\{f_{\alpha}> h\}$ is a topological disc (notice that
once a trajectory enters the sublevel $\{f_{\alpha}\leq h\}$, it
will never exit it anymore). Moreover, starting with a small circle
around $z$ inside its unstable manifold, and pushing it down by the
flow using the technique described earlier, we obtain a loop
$\rho_{z,h}$ whose projection in $\zipped{\Cov{M}}{h}$ is that of a
product of Morse-Novikov steps~:
$$
\rho_{z,h} \sim \gamma_{y_{1}}^{\pm1}\cdots\gamma_{y_{k}}^{\pm1}
\quad\text{ in }\quad \zipped{\Cov{M}}{h}.
$$
In particular, considering the group $F=\Free{\Crit_{1}(\alpha)}$
freely generated by the index $1$ critical points of $\alpha$ (up to
deck transformations and completion), the sequence
$(y_{1}^{\pm1},\dots,y_{k}^{\pm1})$, after removal of the eventual
critical points $y$ such that $h-\kappa<f_{\alpha}(y)\leq h$,
defines a word
$$
w_{z,h}=y_{1}^{\pm1}\dots y_{k}^{\pm1}\in\zipped{F}{h}.
$$

Moreover, these words are compatible with inclusion of sublevels,
namely, for $h<h'$, we have~:
$$
\zip{h}{h'}(w_{z,h}) = w_{z,h'}.
$$
As a consequence, the words $(w_{z,h})_{h}$ define a class $w_{z}$
in $F$.

\begin{definition}
  Define the relation associated to a critical point
  $\underbar{z}\in\Crit_{2}(\alpha)$ as the word
  $w_{\tilde{\underbar{z}}}$, and let
  $$
  \Rels[\alpha] = \{w_{\tilde{\underbar{z}}}, \underbar{z}\in\Crit_{2}(\alpha)\}.
  $$
\end{definition}

\begin{proposition}\label{prop:Crit2SpansRelations}
  The relations associated to the generating family given by the index
  $1$ critical points of $\alpha$ is normally spanned by the relations
  associated to the index $2$ critical points~:
  $$
   \Rel(\Steps) = \NormalSpan{\Rels}.
  $$
\end{proposition}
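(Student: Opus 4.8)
The plan is to prove the two inclusions $\NormalSpan{\Rels}\subset\Rel(\Steps)$ and $\Rel(\Steps)\subset\NormalSpan{\Rels}$ separately, working at each finite level $h$ and then passing to the projective limit. For the first inclusion, I would argue that each relator $w_{\tilde{\underbar z}}$ genuinely lies in $\Rel(\Steps)$: by construction $w_{z,h}$ comes from the boundary circle $\rho_{z,h}$ of the truncated unstable disc $W^{u}(z)\cap\{f_\alpha>h\}$, and since this truncated disc is contractible rel nothing (it is a disc), the loop $\rho_{z,h}$ is null-homotopic in $\zipped{\Cov M}{h}$, i.e. $\eval_h(w_{z,h})=1\in\zipped{\pi_1(f_\alpha)}{h}$. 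Passing to the limit gives $w_{\tilde{\underbar z}}\in\ker\eval=\Rel(\Steps)$. Compatibility of the $w_{z,h}$ with the maps $\zip{h}{h'}$ (already noted before the definition) is what makes this limit argument legitimate. Since $\Rel(\Steps)$ is a subgroup closed under the DTC-completion and conjugation, it contains $\NormalSpan{\Rels}$.

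The substantive direction is $\Rel(\Steps)\subset\NormalSpan{\Rels}$. Here I would fix a regular level $h$ and take a word $w\in\Free{\Steps}$ with $\eval(w)=1$; I want to show $\zip{\infty}{h}(w)\in\zipped{\NormalSpan{\Rels}}{h}$, and then conclude by the limit. Represent $w$ (up to height $\leq h$) by an actual loop $\Gamma$ in $\zipped{\Cov M}{h-\kappa}$ obtained by concatenating the chosen Morse–Novikov step paths $\gamma_y$ appearing in $w$ (with the deck-translates prescribed by the letters). By hypothesis $\Gamma$ is null-homotopic in $\zipped{\Cov M}{h}$; the goal is to see that a null-homotopy can be decomposed into elementary homotopies each of which, read off on the level of Morse–Novikov steps, corresponds to multiplying by a conjugate of some $w_{\tilde{\underbar z}}$. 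The tool for this is the cell structure on $\Cov{M}$ (or on $\zipped{\Cov M}{h}$) coming from the Morse–Smale pair $(\alpha,<,>)$: sublevels attach cells as one crosses critical points, and $\zipped{\Cov M}{h}$ is built from a point by attaching cells indexed by critical points of index $\geq 1$ above level $h$. A $2$-dimensional null-homotopy of $\Gamma$ sweeps across finitely many index-$2$ cells, and crossing the cell attached at $z$ replaces the current word by the word obtained by splicing in $\rho_{z,\cdot}$ along a connecting path — i.e. by a conjugate of $w_{\tilde{\underbar z}}^{\pm1}$. Summing these contributions expresses $\zip{\infty}{h}(w)$ as a product of conjugates of the $w_{\tilde{\underbar z}}$, hence as an element of $\zipped{\NormalSpan{\Rels}}{h}$; taking the limit over $h$ gives $w\in\NormalSpan{\Rels}$.

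Two points need care, and together they are the main obstacle. First, the classical fact I am invoking — that in a CW complex the relation module of $\pi_1$ (relative to the generating family coming from the $1$-cells) is normally generated by the attaching maps of the $2$-cells — must be transported through the truncation/collapse $\Cov M\rightsquigarrow\zipped{\Cov M}{h}$ and, more delicately, made compatible with the deck action and with the bookkeeping of heights: the Morse–Novikov step $\gamma_y$ and the cell of $y$ agree only up to the tails $\gamma_x$ added at index-$0$ ends, and the relator read from the $2$-cell of $z$ is $w_{z,h}$ only after discarding the index-$1$ critical points with $h-\kappa<f_\alpha(y)\leq h$ (exactly the truncation built into the definition of $w_{z,h}$). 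I would handle this by replaying the "flow it down and complete" argument of Proposition~\ref{prop:step} one dimension up: push a null-homotopy of $\Gamma$ down by the gradient flow until it sits in $\sublevel{\Cov M}{h}$ together with neighborhoods of unstable manifolds of index-$\leq 2$ critical points, then retract, so that the homotopy visibly factors through the $2$-skeleton and each index-$2$ passage contributes precisely a $\rho_{z,h}$. Second, the passage to the projective limit: I must check that "$\zip{\infty}{h}(w)$ lies in $\zipped{\NormalSpan{\Rels}}{h}$ for every $h$" really implies "$w\in\NormalSpan{\Rels}$", which is the analogue of the remark that $\ker\eval$ is detected levelwise — this follows because $\NormalSpan{\Rels}=\Span{\Rels'}$ is by definition a projective limit of the levelwise images, so the verification is formal once the levelwise statement is in hand. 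The genuinely hard step is the first one: organizing the two-dimensional cellular homotopy argument so that it respects all of the Novikov-theoretic structure simultaneously.
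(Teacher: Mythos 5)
Your proposal is correct and follows essentially the same route as the paper's proof: represent the word at each level by a loop, take a null-homotopy disc in $\zipped{\Cov{M}}{h}$, push it down by the gradient flow so that it decomposes into unstable manifolds of index $2$ critical points and regions collapsed to the base point, and read off a product of conjugates of the $w_{\tilde{\underbar{z}}}$ before passing to the projective limit. The extra care you devote to the levelwise-to-limit passage and to the truncation bookkeeping is exactly the detail the paper leaves implicit.
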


A straightforward corollary of this proposition is the following~:
\begin{corollary}
  For all Morse $1$-form $\alpha$ in the class $u$ we have
  \begin{equation}\label{eq:Crit2LowerBound}
    \sharp\Crit_{2}(\alpha)\geq \minrel(\pi_{1}(\Cov{M},u)).
  \end{equation}
\end{corollary}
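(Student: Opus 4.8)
The plan is to read the inequality off directly from the two preceding propositions by unwinding the definition of $\minrel(\pi_{1}(\Cov{M},u))$, which is a nested pair of infima. Recall that $\minrel(\pi_{1}(f_{\alpha}))$ is the infimum, taken over all families $A$ generating $\pi_{1}(f_{\alpha})$ up to DTC, of the quantity $\minrel(A)$, and that the latter is itself the infimum of $\sharp B$ over all $B\subset\Rel(A)$ satisfying $\Rel(A)=\NormalSpan{B}$. Since both quantities are defined as infima, the strategy is not to control every admissible choice but simply to exhibit \emph{one} good generating family $A$ and \emph{one} good normally-generating set $B$ whose cardinality is bounded by $\sharp\Crit_{2}(\alpha)$, and then let the two infima absorb the rest.

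First I would invoke Proposition \ref{prop:step}, which states that $\Steps$ is a generating family for $\pi_{1}(\Cov{M},u)$ up to deck transformations and completion. Thus $\Steps$ is an admissible choice of $A$ in the outer infimum, and by definition of $\minrel(\pi_{1}(f_{\alpha}))$ this yields
$$
\minrel(\pi_{1}(f_{\alpha}))\leq\minrel(\Steps).
$$

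Next I would use Proposition \ref{prop:Crit2SpansRelations}, which asserts $\Rel(\Steps)=\NormalSpan{\Rels}$. This exhibits $\Rels$ as an admissible choice of $B$ in the inner infimum defining $\minrel(\Steps)$, so
$$
\minrel(\Steps)\leq\sharp\Rels.
$$
Finally, since $\Rels=\{w_{\tilde{\underbar{z}}},\ \underbar{z}\in\Crit_{2}(\alpha)\}$ contributes at most one element for each index $2$ critical point, we have $\sharp\Rels\leq\sharp\Crit_{2}(\alpha)$. Chaining the three inequalities and identifying $\pi_{1}(f_{\alpha})$ with $\pi_{1}(\Cov{M},u)$ (via the invariance isomorphism) gives $\minrel(\pi_{1}(\Cov{M},u))\leq\sharp\Crit_{2}(\alpha)$, which is precisely \eqref{eq:Crit2LowerBound}.

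There is no real obstacle here: all the work has already been carried out in Propositions \ref{prop:step} and \ref{prop:Crit2SpansRelations}, and this corollary is purely a matter of orienting the definitions. The only point requiring a moment's care is getting the two infima to point the right way — because $\minrel$ is built from infima, one bounds it \emph{from above} by substituting a single convenient choice at each stage (the family $\Steps$, then the set $\Rels$). One should also note that the assignment $\underbar{z}\mapsto w_{\tilde{\underbar{z}}}$ need not be injective, so $\sharp\Rels\leq\sharp\Crit_{2}(\alpha)$ is genuinely an inequality rather than an equality; this only strengthens the bound in our favour.
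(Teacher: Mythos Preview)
Your argument is correct and is exactly what the paper has in mind: it labels this corollary ``straightforward'' from Proposition~\ref{prop:Crit2SpansRelations} and gives no separate proof, leaving the unwinding of the two infima implicit. Your write-up simply makes that unwinding explicit, including the minor observation that $\sharp\Rels\leq\sharp\Crit_{2}(\alpha)$.
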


\begin{proof}
  By construction, the elements in $\NormalSpan{\Rels}$ are indeed
  relations. On the other hand, let $w\in F=\Free{\Steps}$ and suppose
  $w$ evaluates to $1$ in all $\zipped{\pi_{1}(f_{\alpha})}{h}$ (via the
  map defined in \eqref{eq:evalFreeToPi1}).

  Fix a level $h$, and consider $w_{h}=\zip{\infty}{h}(w)\in\zipped{F}{h}$ and
  $\gamma_{h}$ its evaluation in $\zipped{\Cov{M}}{h}$. Since
  $[\gamma_{h}]=1$ in $\zipped{\pi_{1}(f_{\alpha})}{h}$, there is a disc
  $\delta:D^{2}\to\zipped{\Cov{M}}{h}$ with boundary on $\gamma_{h}$.

  Pushing this disc down by the flow, we obtain a new disc
  $\delta'~:D^{2}\to\zipped{\Cov{M}}{h}$, that has the same boundary
  (since it did already consist of flow lines) and splits as a union of
  unstable manifolds of index $2$ critical points (seen in
  $\zipped{\Cov{M}}{h}$), and regions where it evaluates to the base point
  $[\sublevel{\Cov{M}}{h}]$.

  In particular, we obtain that $\gamma_{h}$ can be written in
  $\zipped{\Cov{M}}{h}$ as a product of conjugates of the relations
  associated to index $2$ critical points.
\end{proof}

\section{Hurewicz  homomorphism}\label{sec:hur}

In this section we construct a Hurewicz morphism in the Novikov
setting.

We refer to \cite{Sikorav1987, Farber2004} for a definition of the
Novikov homology $HN_*(\Cov{M}, u)$ associated to a cohomology class
$u\in H^1(M, \R)$ and an associated integration cover $\tilde M$.
The Novikov homology associated to the minimal integration cover
will be denoted as $HN_{*}(M,u)$, and the one associated to the
universal cover by $HN_{*}(M,u;\Z[\pi_{1}(M)])$.

\subsection{Commutators and projective limits}
To describe the kernel of the Hurewicz morphism in the Novikov
setting, we first need to discuss commutators in relation to
projective limits. Let $(G_{h})_{h}$ be a projective system of
groups and $G=\varprojlim G_{h}$. Then both the commutators
$([G_{h},G_{h}])_{h}$ and the abelianizations
$(\abelian{G_{h}})_{h}=(G_{h}/[G_{h},G_{h}])_{h}$ form two other
projective systems. The projective limit of the commutator subgroup
$\varprojlim[G_{h},G_{h}]$ form a normal subgroup of $G$ that
contains the usual commutator subgroup $[G,G]$ of $G$. We refer to
it as the pro-commutator subgroup.

Recall that a projective system $(G_{h})_{h}$ with maps
$G_{h}\xrightarrow{\zip{h}{h'}} G_{h'}$ is said to satisfy the
Mittag-Leffler condition if there is a constant $K$ such that, for
each fixed level $h_{0}$, the maps $\zip{h}{h_{0}}: G_{h}\to
G_{h_{0}}$ have the same range in $G_{h_{0}}$ for all $h\leq
h_{0}-K$.

\begin{proposition}
  If the projective system $(G_{h})_{h}$ satisfies the Mittag-Leffler
  condition, then
  $$
  G/\varprojlim[G_{h},G_{h}] \simeq \varprojlim
  (G_{h}/[G_{h},G_{h}]).
  $$
  This group will be denoted as $\proabelian{G}$ and referred to as the
  pro-abelianization of $G$ (although it depends on the projective system
  $(G_{h})_{h}$ and not on $G$ only).
\end{proposition}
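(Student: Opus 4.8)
The plan is to show directly that the natural map $G/\varprojlim[G_h,G_h] \to \varprojlim(\abelian{G_h})$ is an isomorphism, and the Mittag--Leffler hypothesis enters only to guarantee surjectivity. First I would set $C = \varprojlim[G_h,G_h]$, which sits inside $G = \varprojlim G_h$ as the subgroup of those compatible sequences $(g_h)$ with $g_h \in [G_h,G_h]$ for every $h$; this is plainly a normal subgroup, being the intersection of $G$ with $\prod_h [G_h,G_h]$, the latter being normal in $\prod_h G_h$. The abelianization maps $G_h \to \abelian{G_h}$ assemble into a morphism of projective systems, hence induce a homomorphism $\Phi\colon G \to \varprojlim(\abelian{G_h})$, and I claim $\ker \Phi = C$. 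The inclusion $C \subseteq \ker\Phi$ is immediate. Conversely, if $g = (g_h) \in \ker\Phi$, then each $g_h$ maps to $1$ in $\abelian{G_h}$, i.e.\ $g_h \in [G_h,G_h]$, so $g \in C$. Thus $\Phi$ factors through an injection $\bar\Phi\colon G/C \hookrightarrow \varprojlim(\abelian{G_h})$ with no hypothesis needed so far.

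The remaining, and genuinely substantive, step is surjectivity of $\bar\Phi$, for which Mittag--Leffler is needed. Given a compatible sequence $(\bar g_h) \in \varprojlim(\abelian{G_h})$, I must produce an element $(g_h) \in G = \varprojlim G_h$ lifting it. The issue is the standard one with projective limits: one can lift each $\bar g_h$ to some $g_h \in G_h$, but the lifts need not be compatible — $\zip{h}{h'}(g_h)$ and $g_{h'}$ differ by a commutator in $G_{h'}$, not by $1$ — and correcting the discrepancies level by level risks an infinite regress. This is exactly where one invokes Mittag--Leffler: I would reindex so that the stable image $G_h^{\mathrm{st}} := \bigcap_{h' \le h} \operatorname{im}(\zip{h'}{h}) = \operatorname{im}(\zip{h-K}{h})$ (for the uniform constant $K$) is attained, note that $(G_h^{\mathrm{st}})_h$ is again a projective system with \emph{surjective} transition maps, and observe that $\abelian{(G_h^{\mathrm{st}})}$ has the same projective limit as $\abelian{G_h}$ (since the inclusions $G_h^{\mathrm{st}} \hookrightarrow G_h$ become isomorphisms-on-images compatibly, or more carefully: cofinally the systems agree). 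For a tower with surjective transition maps the lifting goes through: build $g_h \in G_h^{\mathrm{st}}$ inductively from the top down, at each stage choosing a preimage of $g_{h+1}$ under the surjection $\zip{h}{h+1}$ restricted to a coset of $[G_h^{\mathrm{st}}, G_h^{\mathrm{st}}]$ so as to match $\bar g_h$ in $\abelian{G_h}$ — this is possible precisely because the transition map is onto, so it is onto on abelianizations and one can correct within the kernel.

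The main obstacle I anticipate is making the reduction to surjective transition maps fully rigorous while keeping track of abelianizations: one needs that passing to stable images does not change $\varprojlim \abelian{G_h}$, and that the commutator correction terms can always be absorbed, which is a small diagram-chase using right-exactness of abelianization (a surjection $A \twoheadrightarrow B$ induces a surjection $\abelian{A} \twoheadrightarrow \abelian{B}$ with kernel the image of $\ker(A\to B)$). Once surjectivity is in hand, $\bar\Phi$ is an isomorphism and the proposition follows; the definition of $\proabelian{G} := G/\varprojlim[G_h,G_h]$ and the remark about its dependence on the system rather than on $G$ alone require no further argument, since $\varprojlim[G_h,G_h]$ manifestly uses the system. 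I would also remark in passing that without Mittag--Leffler the map $\bar\Phi$ can genuinely fail to be surjective (the usual $\varprojlim^1$ obstruction), which is why the hypothesis is stated.
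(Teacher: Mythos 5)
Your argument is correct, but it takes a more self-contained route than the paper. The paper's proof is three lines: it writes $1\to[G_{h},G_{h}]\to G_{h}\to \abelian{G_{h}}\to 1$ as an exact sequence of projective systems, observes that $([G_{h},G_{h}])_{h}$ inherits the Mittag--Leffler condition from $(G_{h})_{h}$ (since $\zip{h}{h_{0}}([G_{h},G_{h}])=[\zip{h}{h_{0}}(G_{h}),\zip{h}{h_{0}}(G_{h})]$, stabilization of the images of the $G_{h}$ forces stabilization of the images of their commutator subgroups), and then invokes the standard exactness theorem for inverse limits of towers with Mittag--Leffler kernel system (the reference used elsewhere in the paper is Massey). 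What you have done is open that black box: your kernel computation is the hypothesis-free left-exactness of $\varprojlim$, and your surjectivity argument is exactly the proof that the $\lim^{1}$ obstruction vanishes under Mittag--Leffler. Your proof is fine, with one spot that deserves tightening: the claim that passing to the stable images $G_{h}^{\mathrm{st}}$ ``does not change $\varprojlim\abelian{G_{h}}$'' is not because the systems agree cofinally --- the map $\abelian{G_{h}^{\mathrm{st}}}\to\abelian{G_{h}}$ need be neither injective nor surjective. The correct statement is that the maps $\abelian{G_{h}^{\mathrm{st}}}\to\abelian{G_{h}}$ and $\abelian{G_{h}}\to\abelian{G_{h+K}^{\mathrm{st}}}$ (the latter induced by $\zip{h}{h+K}$, whose image is exactly $G_{h+K}^{\mathrm{st}}$) interleave the two towers, exhibiting a pro-isomorphism and hence an isomorphism of limits. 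Alternatively, you can bypass the reduction entirely: given $(\bar g_{h})\in\varprojlim\abelian{G_{h}}$, let $S_{h}\subset G_{h}$ be the coset of $[G_{h},G_{h}]$ lying over $\bar g_{h}$; the images $\zip{h}{h_{0}}(S_{h})$ are nested nonempty cosets of the stable commutator subgroup, hence stabilize, so the tower of nonempty sets $(S_{h})$ is Mittag--Leffler and has nonempty inverse limit (restricting to a countable cofinal set of levels), which is the desired lift. In exchange for being longer, your argument makes the mechanism visible and does not depend on the cited theorem; the paper's is shorter but leans on the observation about commutator subgroups inheriting Mittag--Leffler, which your write-up never needs to isolate.
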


Notice that since $[G,G]$ is a subgroup of $\varprojlim
[G_{h},G_{h}]$, there is a projection
$\abelian{G}\to\proabelian{G}$, but these groups are different in
general.

\begin{proof}
  At each level, we have the following exact sequence
  $$
  0\to [G_{h},G_{h}]\to G_{h} \to G_{h}/[G_{h},G_{h}]\to 0,
  $$
  which form in fact an exact sequence of projective systems.

  Observe now that if $(G_{h})_{h}$ satisfies the Mittag-Leffler
  condition, so does $([G_{h},G_{h}])_{h}$. As a consequence, we have an
  exact sequence among the projective limits~:
  $$
  0\to
  \varprojlim [G_{h},G_{h}]\to
  G \to
  \varprojlim (G_{h}/[G_{h},G_{h}])\to 0,
  $$
  which shows that $G/\varprojlim[G_{h},G_{h}]$ is isomorphic to
  $\varprojlim (G_{h}/[G_{h},G_{h}])$.
\end{proof}

\begin{lemma}
  The Novikov fundamental group satisfies the Mittag-Leffler condition,
  namely, if $\alpha$ is a $1$-form in the class $u$, and $f_{\alpha}$ a
  primitive of $\alpha$ on $\Cov{M}$, then there exist
  a constant $\kappa$ such that, for all level $h\in\R$ the maps
  $$
  \zipped{\pi_{1}(f_{\alpha})}{h'}\xrightarrow{\zip{h'}{h}}
  \zipped{\pi_{1}(f_{\alpha})}{h }
  $$
  have all the same range provided $h'<h-\kappa$.
\end{lemma}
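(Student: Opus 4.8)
The plan is to reduce the statement to the case of a Morse $1$-form and then to extract the Mittag-Leffler property from the generating construction of Proposition~\ref{prop:step}. For the reduction, note that given two $1$-forms $\alpha,\beta$ in the class $u$ with primitives $f_\alpha,f_\beta$ on $\Cov{M}$, the invariance argument (Remark~\ref{rk:Invariance}) presents the projective systems $(\zipped{\pi_1(f_\alpha)}{h})_h$ and $(\zipped{\pi_1(f_\beta)}{h})_h$ as essentially equivalent, i.e.\ as fitting into a ladder of the form \eqref{eq:invariance-diagram} with $K=\Vert f_\alpha-f_\beta\Vert_\infty$. A short diagram chase in that ladder shows that one of the two systems satisfies the Mittag-Leffler condition (say with constant $\kappa_0$) if and only if the other one does (then with constant $\kappa_0+2K$). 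Hence it suffices to treat the case where $\alpha$ is Morse and $(\alpha,<,>)$ is Morse-Smale; we fix such data, together with all the auxiliary choices of the previous section (primitive $f_\alpha$, preferred lifts of critical points, orientations, preferred paths $\gamma_x$), and we let $\kappa$ be the constant of \eqref{eq:gammaxHeightBound}.

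Assume then $\alpha$ is Morse. I claim that for every $h\in\R$ and all levels $h''\le h'\le h-\kappa$, the maps $\zip{h'}{h}$ and $\zip{h''}{h}$ have the same range inside $\zipped{\pi_1(f_\alpha)}{h}$; since the transition maps of an inverse system have a decreasing range, this is exactly the Mittag-Leffler condition with constant $\kappa$. The inclusion $\mathrm{Im}\,\zip{h''}{h}\subseteq\mathrm{Im}\,\zip{h'}{h}$ follows at once from $\zip{h''}{h}=\zip{h'}{h}\circ\zip{h''}{h'}$, so only the reverse inclusion needs an argument. Let $y\in\zipped{\pi_1(f_\alpha)}{h'}$, represent $\zip{h'}{h-\kappa}(y)$ by an actual loop in $\zipped{\Cov{M}}{h-\kappa}$, and push it down by the gradient flow of $-f_\alpha$ exactly as in the proof of Proposition~\ref{prop:step} (the argument there only uses that one starts with a loop in $\zipped{\Cov{M}}{h-\kappa}$, not that it comes from the projective limit). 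This rewrites $\zip{h'}{h}(y)$ in $\zipped{\pi_1(f_\alpha)}{h}$ as a product
$$
\zip{h'}{h}(y)=[\,\gamma_{y_1}^{\pm1}\cdots\gamma_{y_k}^{\pm1}\,],\qquad y_1,\dots,y_k\in\Crit_1(f_\alpha),
$$
of classes of (deck translates of) Morse-Novikov steps. Now each Morse-Novikov step is a path $\R\to\Cov{M}$ with $f_\alpha\to-\infty$ at both ends, so it projects to a loop based at $\star_t$ in $\zipped{\Cov{M}}{t}$ for every $t\in\R$, compatibly with the collapsing maps; hence the right-hand side above is the image under $\zip{h''}{h}$ of the analogous product formed in $\zipped{\pi_1(f_\alpha)}{h''}$. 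Therefore $\zip{h'}{h}(y)\in\mathrm{Im}\,\zip{h''}{h}$, which proves the claim.

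The one delicate point is hidden in the displayed identity: one has to make sure that none of the path completions inserted by the flow-down procedure — the round trips along unbounded flow lines, and the round trips along the remaining part of a bounded flow line followed by the preferred path $\gamma_x$ — ever climbs above level $h$. This is guaranteed precisely because the procedure is applied to a representative living in $\zipped{\Cov{M}}{h-\kappa}$ and because of the defining bound \eqref{eq:gammaxHeightBound} on the paths $\gamma_x$; it is exactly the situation already handled in the proof of Proposition~\ref{prop:step}. Everything else — the diagram chase used in the reduction step, and the verification that a product of Morse-Novikov steps lies in the range of $\zip{h''}{h}$ — is routine.
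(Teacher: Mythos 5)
Your proof is correct and follows essentially the same route as the paper's: reduce to the Morse case via the invariance ladder (with the same constant bookkeeping $\kappa_0\mapsto\kappa_0+2K$), then use the bound \eqref{eq:gammaxHeightBound} to show that any class in the image of $\zip{h-\kappa}{h}$ is represented above level $h$ by paths running to $-\infty$, hence lies in the image of $\zip{h''}{h}$ for every $h''\leq h-\kappa$. The only cosmetic difference is that you invoke the full Morse--Novikov step decomposition from the proof of Proposition~\ref{prop:step}, whereas the paper merely flows the endpoints of the constituent arcs down and completes them with the paths $\gamma_x$; the underlying mechanism is identical.
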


\begin{remark}
  The Mittag-Leffler condition is a condition on projective systems, and
  does not make sense for a single group. However, as observed in remark
  \ref{rk:Invariance}, the Novikov fundamental group comes with a
  preferred collection of essentially equivalent projective systems, and
  the statement claims they do all satisfy the Mittag-Leffler condition.
\end{remark}

\begin{proof}
  The invariance argument shows that this statement does not depend on
  the choice of the one form $\alpha$ nor on the primitive $f_{\alpha}$.

  More explicitly, let $f_{\alpha}$ and $f_{\beta}$ be two primitives of
  two $1$-forms $\alpha$ and $\beta$ in the cohomology class $u$, and
  suppose $\pi_{1}(f_{\alpha})$ satisfies the Mittag-Leffler condition
  for a constant $\kappa$. Recall that $K=\Vert
  f_{\alpha}-f_{\beta}\Vert_{\infty}<+\infty$. Then the following
  commutative diagram
  $$
  \xymatrix{
    \zipped{\pi_{1}(f_{\alpha})}{h}\ar[rrr]\ar[dr]&&&
    \zipped{\pi_{1}(f_{\alpha})}{h+2K+\kappa}\\
    &\zipped{\pi_{1}(f_{\beta})}{h+K}\ar[r]&
    \zipped{\pi_{1}(f_{\beta})}{h+K+\kappa}\ar[ur]\\
  }
  $$
  shows that $\pi_{1}(f_{\beta})$ satisfies the Mittag-Leffler condition for
  the constant $2K+\kappa$.

  So we can suppose without loss of generality that $\alpha$ is Morse,
  and consider the constant $\kappa$ already defined in
  \eqref{eq:gammaxHeightBound}~: for each index $0$ critical point $x$
  of $f_{\alpha}$, there is a path $\gamma_{x}$ that starts at $x$ and
  goes down to $-\infty$ (i.e.
  $\lim_{t\to+\infty}f_{\alpha}(\gamma_{x}(t))=-\infty$), and does not go
  higher than $f_{\alpha}(x)+\kappa$.

  In particular, if $\gamma:[0,1]\to\Cov{M}$ is a path that starts and
  ends below a level $h-\kappa$ (i.e. $f_{\alpha}(\gamma(0))\leq h$ and
  $f_{\alpha}(\gamma(1))\leq h$) with generic ends, we can flow its ends
  down to either $-\infty$ or an index $0$ critical point $x$, in which
  case we extend the resulting path by $\gamma_{x}$. Through this
  process, $\gamma$ is extended as a path $\bar{\gamma}:\R\to\Cov{M}$
  such that~:
  \begin{enumerate}
  \item
    $\lim_{t\to\pm\infty}f_{\alpha}(\bar\gamma(t))=-\infty$
  \item
    $\bar{\gamma}$ and $\gamma$ are the same above level $h$, more
    precisely $\bar\gamma:\R\to\zipped{\Cov{M}}{h}$ and
    $\gamma:\R\to\zipped{\Cov{M}}{h}$ (where $\gamma$ is extended away
    from $[0,1]$ as the constant map to the base point) are the same maps.
  \end{enumerate}

  In particular, this proves that if an element
  $[\gamma]\in\zipped{\pi_{1}(f_{\alpha})}{h}$ is in the image of
  $$
  \zipped{\pi_{1}(f_{\alpha})}{h-\kappa}
  \xrightarrow{\zip{h-\kappa}{h}},
  \zipped{\pi_{1}(f_{\alpha})}{h}
  $$
  it is also in the image of
  $$
  \zipped{\pi_{1}(f_{\alpha})}{h'}
  \xrightarrow{\zip{h'}{h}},
  \zipped{\pi_{1}(f_{\alpha})}{h}
  $$
  for all $h'\leq h-\kappa$.
\end{proof}

Moreover, the invariance argument also shows the pro-commutator
subgroup and the pro-abelianization group do not depend on the
choice of $\alpha$ or $f_{\alpha}$, so that we end up with a well
defined group
$$
\proabelian{\pi_{1}(\Cov{M},u)} = \proabelian{\pi_{1}(f_{\alpha})} =
\varprojlim\ \abelian{(\zipped{ \pi_{1}(f_{\alpha})}{h})}.
$$

\subsection{Hurewicz morphism}
The object of this section is to establish the following theorems.

In the case where $\Cov{M}$ is the minimal integration cover, we
have the following result~:
\begin{theorem}\label{thm:HurewiczMini}
  Let $u\in H^{1}(M,\R)$ be a cohomology class, and
  $\Cov{M}$ be its minimal integration cover.

  Then there is a surjective homomorphism
  $$
  \h: \pi_1(\Cov{M}, u) \to HN_{1}(M,u;\Z).
  $$
  The kernel of $\h$ consists of the pro-commutator subgroup of
  $\pi_{1}(\Cov{M},u)$. In particular, $\h$ induces an isomorphism
  $$
  \h: \proabelian{\pi_1(\Cov{M}, u)} \simeq HN_{1}(M,u;\Z).
  $$
\end{theorem}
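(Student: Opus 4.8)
The plan is to build the Hurewicz morphism $\h$ levelwise and then pass to the projective limit. For each regular level $h$, the zipped space $\zipped{\Cov{M}}{h}$ is a pointed space, so there is the classical Hurewicz map $\pi_{1}(\zipped{\Cov{M}}{h},\star_{h})\to H_{1}(\zipped{\Cov{M}}{h};\Z)$, which is surjective and whose kernel is exactly the commutator subgroup $[\zipped{\pi_{1}(f_{\alpha})}{h},\zipped{\pi_{1}(f_{\alpha})}{h}]$. The first task is to identify $H_{1}(\zipped{\Cov{M}}{h};\Z)$ with the degree-$1$ piece of a complex computing Novikov homology of the minimal integration cover. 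Since $\Deck\cong\Z^{k}$ acts on $\Cov{M}$ with $f_{\alpha}(\tau\cdot x)=f_{\alpha}(x)+u(\tau)$, and the Novikov ring is built precisely from formal sums supported in sublevels, the reduced homology $\tilde H_{*}(\zipped{\Cov{M}}{h})=H_{*}(\Cov{M},\sublevel{\Cov{M}}{h})$ organizes, as $h\to-\infty$, into $HN_{*}(M,u;\Z)$; this is the interpretation of the Novikov complex as a projective limit of relative complexes cited in the introduction (Sikorav, Usher). So I would first record a lemma stating $\varprojlim_{h} H_{1}(\zipped{\Cov{M}}{h};\Z)\cong HN_{1}(M,u;\Z)$, invoking that identification.

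Next I would assemble the levelwise Hurewicz maps into a morphism of projective systems: the squares
\[
\xymatrix{
\zipped{\pi_{1}(f_{\alpha})}{h'}\ar[r]\ar[d] & H_{1}(\zipped{\Cov{M}}{h'};\Z)\ar[d]\\
\zipped{\pi_{1}(f_{\alpha})}{h}\ar[r] & H_{1}(\zipped{\Cov{M}}{h};\Z)
}
\]
commute by naturality of Hurewicz under the inclusion-induced maps $\zip{h'}{h}$. Taking $\varprojlim_{h}$ yields $\h:\pi_{1}(\Cov{M},u)=\varprojlim\zipped{\pi_{1}(f_{\alpha})}{h}\to\varprojlim H_{1}(\zipped{\Cov{M}}{h};\Z)\cong HN_{1}(M,u;\Z)$. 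For surjectivity, I use that $\pi_{1}(f_{\alpha})$ satisfies the Mittag-Leffler condition (the Lemma just before the theorem), hence so do the commutator subsystems $([\zipped{\pi_{1}(f_{\alpha})}{h},\zipped{\pi_{1}(f_{\alpha})}{h}])_{h}$; then $\varprojlim^{1}$ of the commutator system vanishes, so applying $\varprojlim$ to the exact sequences of projective systems
\[
0\to[\zipped{\pi_{1}(f_{\alpha})}{h},\zipped{\pi_{1}(f_{\alpha})}{h}]\to\zipped{\pi_{1}(f_{\alpha})}{h}\to H_{1}(\zipped{\Cov{M}}{h};\Z)\to 0
\]
keeps it exact. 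This simultaneously gives surjectivity of $\h$ and identifies its kernel with $\varprojlim[\zipped{\pi_{1}(f_{\alpha})}{h},\zipped{\pi_{1}(f_{\alpha})}{h}]$, which is by definition the pro-commutator subgroup, and the induced isomorphism $\proabelian{\pi_{1}(\Cov{M},u)}\simeq HN_{1}(M,u;\Z)$ then follows from the Proposition identifying $G/\varprojlim[G_{h},G_{h}]$ with $\varprojlim\abelian{G_{h}}$ under Mittag-Leffler.

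The main obstacle is the first step: making precise the identification $\varprojlim_{h}H_{1}(\zipped{\Cov{M}}{h};\Z)\cong HN_{1}(M,u;\Z)$ in the minimal-cover case, and in particular checking it is natural with respect to the connecting maps so that it intertwines the levelwise Hurewicz morphisms with the standard Hurewicz map on Novikov homology. One has to be careful that the minimal integration cover is used (so that $\Deck=\Z^{k}$ and the relative homology groups carry the right module structure), and that $H_{*}(\Cov{M},\sublevel{\Cov{M}}{h})$ is finitely generated as needed for the limit to behave well; since $M$ is closed this holds after passing to a Morse $\alpha$, where $H_{*}(\Cov{M},\sublevel{\Cov{M}}{h})$ is computed by the finite-rank relative Novikov chain groups in the window above $h$. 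A minor secondary point is that one should phrase everything for regular levels $h$ and note the limit is insensitive to this restriction, exactly as in the invariance argument. Once this dictionary is in place, the rest is a formal diagram chase with Mittag-Leffler, as sketched.
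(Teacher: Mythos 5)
Your overall architecture --- levelwise Hurewicz maps, naturality with respect to the maps $\zip{h}{h'}$, the exact sequence of projective systems, and Mittag-Leffler for the commutator subsystem giving both surjectivity and the identification of the kernel with the pro-commutator subgroup --- is exactly the paper's route; the paper packages that part as a separate statement (Theorem \ref{thm:HurewiczGeneral}), valid for an arbitrary integration cover, whose conclusion is an isomorphism $\proabelian{\pi_1(\Cov{M},u)}\simeq\varprojlim_h H_1(\Cov{M},\sublevel{\Cov{M}}{h})$. Up to that point your argument is sound.

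The gap is in the step you yourself flag as the main obstacle: the identification $\varprojlim_h H_1(\Cov{M},\sublevel{\Cov{M}}{h})\cong HN_1(M,u;\Z)$. This is not a formal consequence of the Novikov complex being a projective limit of relative complexes, because homology does not commute with projective limits; the defect is measured by Sikorav's exact sequence
$$
0\to\lim\nolimits^{1} H_{2}(\Cov{M},\sublevel{\Cov{M}}{h})\to HN_1(M,u)\to\varprojlim_h H_1(\Cov{M},\sublevel{\Cov{M}}{h})\to 0,
$$
so the lemma you want is precisely the vanishing of $\lim\nolimits^{1} H_{2}(\Cov{M},\sublevel{\Cov{M}}{h})$. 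Your proposed justification --- that the relative chain groups in a window above $h$ have finite rank, so the relative homology is finitely generated and ``the limit behaves well'' --- does not work: above any fixed level there are infinitely many lifts of each critical point (since $u\neq 0$ forces $u(\tau)>0$ for infinitely many $\tau\in\Deck$), so those chain groups are not of finite rank; and even finite generation of each $H_{*}(\Cov{M},\sublevel{\Cov{M}}{h})$ would not kill $\lim\nolimits^{1}$ (the system $\Z\xleftarrow{2}\Z\xleftarrow{2}\cdots$ has all terms finitely generated and nonzero $\lim\nolimits^{1}$). What the paper actually uses, and what makes the minimal-cover hypothesis essential, is Usher's theorem: over the Novikov completion of $\Z[\Z^{r}]$, a chain that is a boundary is the boundary of a chain going no higher than a uniform constant above it. This yields the Mittag-Leffler condition for the system $(H_{*}(\Cov{M},\sublevel{\Cov{M}}{h}))_{h}$ and hence the vanishing of the $\lim\nolimits^{1}$ term. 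Without that input your argument only establishes the general statement with $\varprojlim_h H_1(\Cov{M},\sublevel{\Cov{M}}{h})$ in place of $HN_1(M,u;\Z)$.
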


For more general integration covers, we have the slightly weaker
result~:
\begin{theorem}\label{thm:HurewiczGeneral}
  Let $u\in H^{1}(M,\R)$ be a cohomology class, and $\Cov{M}$ be one of
  its integration cover.

  Then there is a surjective homomorphism
  $$
  \h: \pi_1(\Cov{M}, u) \to
  \varprojlim_{h}H_{1}(\Cov{M},\sublevel{\Cov{M}}{h}).
  $$
  The kernel of $\h$ consists of the pro-commutator subgroup of
  $\pi_{1}(\Cov{M},u)$. In particular, $\h$ induces an isomorphism
  $$
  \h: \proabelian{\pi_1(\Cov{M}, u)} \xrightarrow{\simeq}
  \varprojlim_{h}H_{1}(\Cov{M},\sublevel{\Cov{M}}{h}).
  $$
\end{theorem}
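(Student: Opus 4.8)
The plan is to construct the Hurewicz morphism one sublevel at a time and then pass to the projective limit, using the Mittag--Leffler lemma proved above to keep the limit exact. First I would fix a Morse $1$-form $\alpha$ in the class $u$ and a primitive $f_{\alpha}$ on $\Cov{M}$; by the invariance argument this costs nothing, and it guarantees that each $\sublevel{\Cov{M}}{h}$ is a codimension-$0$ submanifold with boundary, hence that $(\Cov{M},\sublevel{\Cov{M}}{h})$ is a good pair. Since $u\neq 0$, a deck transformation $\tau$ with $u(\tau)<0$ pushes any point arbitrarily far down, so $f_{\alpha}$ is unbounded below, $\sublevel{\Cov{M}}{h}$ is nonempty, and $\zipped{\Cov{M}}{h}=\Cov{M}/\sublevel{\Cov{M}}{h}$ is path-connected with base point $\star_{h}$. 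Writing $G_{h}=\zipped{\pi_{1}(f_{\alpha})}{h}=\pi_{1}(\zipped{\Cov{M}}{h},\star_{h})$, the classical Hurewicz theorem gives a surjection $G_{h}\twoheadrightarrow H_{1}(\zipped{\Cov{M}}{h};\Z)$ with kernel $[G_{h},G_{h}]$, and the good-pair identification $H_{1}(\Cov{M}/\sublevel{\Cov{M}}{h};\Z)\cong H_{1}(\Cov{M},\sublevel{\Cov{M}}{h};\Z)$ turns this into a short exact sequence $1\to[G_{h},G_{h}]\to G_{h}\to H_{1}(\Cov{M},\sublevel{\Cov{M}}{h})\to 0$.

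All of this is natural in $h$: for $h<h'$ the inclusion $\sublevel{\Cov{M}}{h}\subset\sublevel{\Cov{M}}{h'}$ induces a base-point-preserving collapse map $\zipped{\Cov{M}}{h}\to\zipped{\Cov{M}}{h'}$, and both the Hurewicz map and the quotient/relative identification are functorial for it. So we obtain a short exact sequence of projective systems indexed by $h\to-\infty$, and taking $\varprojlim_{h}$ gives, by left exactness, the exact sequence $1\to\varprojlim_{h}[G_{h},G_{h}]\to\pi_{1}(\Cov{M},u)\xrightarrow{\h}\varprojlim_{h}H_{1}(\Cov{M},\sublevel{\Cov{M}}{h})$. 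To upgrade this to surjectivity of $\h$ and an exact identification of the image, I need $\varprojlim^{1}$ of the commutator tower to be trivial. This is exactly the point supplied by the lemma above: $(G_{h})_{h}$ satisfies the Mittag--Leffler condition, and — as already observed in the proof of the pro-abelianization proposition — the subsystem $([G_{h},G_{h}])_{h}$ then inherits it; a Mittag--Leffler tower of groups has vanishing $\varprojlim^{1}$, so the limit of the short exact sequences is again short exact.

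Hence $\h$ is surjective with kernel exactly $\varprojlim_{h}[G_{h},G_{h}]$, which is by definition the pro-commutator subgroup of $\pi_{1}(\Cov{M},u)$; quotienting out gives the induced isomorphism $\proabelian{\pi_{1}(\Cov{M},u)}=\pi_{1}(\Cov{M},u)/\varprojlim_{h}[G_{h},G_{h}]\xrightarrow{\ \simeq\ }\varprojlim_{h}H_{1}(\Cov{M},\sublevel{\Cov{M}}{h})$. Applying the invariance argument simultaneously to the towers $(G_{h})_{h}$ and $(H_{1}(\Cov{M},\sublevel{\Cov{M}}{h}))_{h}$ attached to different pairs $(\alpha,f_{\alpha})$ shows that $\h$, its kernel, and the target limit do not depend on these choices, so the statement is well posed; and since each $\tau\in\Deck$ carries the level-$h$ data to the level-$(h+u(\tau))$ data and the Hurewicz maps are natural, $\h$ is automatically $\Deck$-equivariant. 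The only genuinely non-formal ingredient is the vanishing of $\varprojlim^{1}$ of the commutators — everything else is the ordinary Hurewicz theorem made functorial, together with minor point-set care that the sublevels form nonempty good subcomplexes. Note that, unlike Theorem \ref{thm:HurewiczMini}, here I do not attempt to identify $\varprojlim_{h}H_{1}(\Cov{M},\sublevel{\Cov{M}}{h})$ with a Novikov homology group, which is exactly why the conclusion is stated in this weaker form.
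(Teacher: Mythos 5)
Your proposal is correct and follows essentially the same route as the paper: level-wise Hurewicz surjections $G_{h}\twoheadrightarrow H_{1}(\Cov{M},\sublevel{\Cov{M}}{h})$ with kernel $[G_{h},G_{h}]$, naturality in $h$, and exactness of the limit of the resulting short exact sequence of towers via the Mittag--Leffler condition on $([G_{h},G_{h}])_{h}$ inherited from $(G_{h})_{h}$. The only (immaterial) difference is that you fix a Morse form to make the sublevels good pairs, where the paper takes an arbitrary form and works at regular levels.
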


Theorem \ref{thm:HurewiczMini} follows from theorem
\ref{thm:HurewiczGeneral} and an improved description of
$\varprojlim_{h}H_{1}(\Cov{M},\sublevel{\Cov{M}}{h})$ in terms of
Novikov homology in the particular cases when $\Cov{M}$ is the
minimal integration cover.

\begin{proof}[Proof of theorem \ref{thm:HurewiczMini}
  from theorem \ref{thm:HurewiczGeneral}]

  Recall that the Novikov complex can be interpreted as a projective
  limit of relative complexes \cite{Sikorav1987}~:
  $$
  CN_{*}(M,u) = \varprojlim_{h}C_{*}(\Cov{M},\sublevel{\Cov{M}}{h})
  $$
  In \cite{Sikorav1987}, J.-C. Sikorav shows that the comparison of the
  homology of the projective limit and the projective limit of the
  homologies gives rise to the following short exact sequence
  \begin{equation}
    \label{eq:lim1}%
    0\to\lim\nolimits^{1}
    H_{*+1}(\Cov{M},\sublevel{\Cov{M}}{h}) \to HN_{*}(\Cov{M},u)\to
    \varprojlim_{h}H_{*}(\Cov{M},\sublevel{\Cov{M}}{h})\to 0.
  \end{equation}

  In the particular case where $\Cov{M}$ is the minimal integration cover
  of $u$, the deck transformation group $\Deck$ is isomorphic to $\Z^{r}$
  for some $r>0$, and M.~Usher shows in \cite[Theorem~1.3,
  p.~1583]{Usher2008} that a Novikov chain $c$ that is a boundary of a
  Novikov chain $d$, is the boundary of a chain $d'$ that goes no higher
  than the maximal height of $c$ plus some constant $M$.

  This implies that $H_{*}(\Cov{M},\sublevel{\Cov{M}}{h})$ satisfies the
  Mittag-Leffler condition, and hence that the $\lim^{1}$ term in
  \eqref{eq:lim1} vanishes. This ends the proof of theorem
  \ref{thm:HurewiczMini} assuming theorem
  \ref{thm:HurewiczGeneral}.
\end{proof}

\begin{proof}[Proof of theorem \ref{thm:HurewiczGeneral}]
  Pick a $1$-form $\alpha$ in the class $u$ and a primitive $f_{\alpha}$
  of $\alpha$ on $\Cov{M}$. Observe that at each regular level $h\in\R$,
  we have a Hurewicz morphism
  $$
  \zipped{\h}{h}: \zipped{\pi_1(f_{\alpha})}{h}
  \to H_{1}(\zipped{\Cov{M}}{h})
  =   H_{1}(\Cov{M},\sublevel{\Cov{M}}{h}).
  $$
  ($H_1(X;A)\simeq H_1(X/A)$ holds in general when $A$ is a deformation
  retract of a neighborhood of $A$ in $X$~; when $h$ is a regular level,
  this is the case for $A=\sublevel{\Cov{M}}{h}$ in $X=\Cov{M}$, since every point has
  a neighborhood that is either contained in $A$ or pushed back in $A$ by
  the flow).

  Moreover, this sequence of morphisms $\zipped{\h}{h}$ are compatible
  with the morphisms associated to increasing levels $h<h'$. In other
  words, the following diagram
  $$
  \xymatrix{
    \dots\ar[r]&
    \zipped{\pi_1(f_{\alpha})}{h }\ar@{>>}^{\zipped{\h}{h }}[d]\ar^{\zip{h}{h'}}[r]&
    \zipped{\pi_1(f_{\alpha})}{h'}\ar@{>>}^{\zipped{\h}{h'}}[d]\ar[r]&\dots
    \\
    \dots\ar[r]&
    H_{1}(\Cov{M},\sublevel{\Cov{M}}{h })\ar^{p_{*}}[r]&
    H_{1}(\Cov{M},\sublevel{\Cov{M}}{h'})\ar[r]&
    \dots
    }
  $$
  (where $p_{*}$ is the map induced by the projection
  $\zipped{\Cov{M}}{h} \xrightarrow{p} \zipped{\Cov{M}}{h'}$) is
  commutative. In particular, the sequence of maps $(\zipped{\h}{h})_{h}$
  defines a morphism
  $$
  \pi_{1}(f_{\alpha}) \xrightarrow{\h} \varprojlim H_{1}(\Cov{M},\sublevel{\Cov{M}}{h}).
  $$

  Moreover, at each level, we have the exact sequence
  $$
  0\to\Big[\zipped{\pi_{1}(f_{\alpha})}{h},\zipped{\pi_{1}(f_{\alpha})}{h}\Big]\to
  \zipped{\pi_{1}(f_{\alpha})}{h}\xrightarrow{\zipped{\h}{h}}
  H_{1}(\Cov{M},\sublevel{\Cov{M}}{h})\to 0.
  $$
  This defines in fact an exact sequence of projective systems, and since
  the first term satisfies the Mittag-Leffler condition, we obtain an
  exact sequence for the projective limits (see \cite[Theorem A.14]{Massey1978}). In
  particular, we obtain that $\h$ induces an isomorphism
  $$
  \proabelian{\pi_{1}(f_{\alpha})} \xrightarrow[\h]{\ \sim\ }
  \varprojlim{H_{1}(\Cov{M},\sublevel{\Cov{M}}{h})}.
  $$
\end{proof}

\section{Examples}

In this section we focus on manifolds of the form
$$
M=\T^{n}\sharp X
$$
obtained as the connected sum of a torus and some closed manifold
$X$ of dimension $n\geq 4$. For convenience we let $G=\pi_{1}(X)$.

The main example we have in mind is the case where
$X=S\times\S^{n-3}$, where $S$ is the Poincaré sphere, in which
case we have~:
\begin{equation}
  \label{eq:PresentationOfpi1S}%
  G = \pi_{1}(S) =  <a,b\, |\, a^{5}=b^{3}, a^{5}=(ab)^{2}>,
\end{equation}
but other cases will also be considered, and the discussion below
holds for general $X$.

Let $\T^{n}\xrightarrow{\theta}\S^{1}$ be the first coordinate on
the torus, and consider the cohomology class
$$
u=\pi^{\star}d\theta,
$$
where $\pi:M\to\T^{n}$ is a projection mapping $X$ to a point.

The minimal integration cover of $u$ is then the manifold
$$
(\R\times \T^{n-1})\underset{\Z}{\sharp} X
$$
obtained by performing the connected sum of the cylinder with a
fresh copy of $X$ at each lift of the point where the connected sum
took place on the torus.

\subsection{Basic recollection of Novikov homology for $\T^{n}\sharp X$ }
We denote by $HN_{\star}(M,u)$ the Novikov homology associated to
this covering of $M$. Algebraically, this comes down to considering
the group $\Deck = \pi_{1}(M)/\ker u\simeq \Z$ as defining a local
coefficients system, and use coefficients in the Novikov completion
$\Lambda=\Z((t))$ of the group ring $\Z[\Deck]=\Z[t,t^{-1}]$.
Namely,
$$
\Lambda =\{\sum_{-N}^{+\infty}a_{n}t^{n}, a_{n}\in\Z\}
$$
is the set of Laurent power series over $\Z$, and we consider the
complex
$$
CN_{\ast}(M,u) = \Lambda\otimes_{\Z[t,t^{-1}]}
C_{\ast}(M,\Z[t,t^{-1}])
$$
where $C_{\ast}(M,\Z[t,t^{-1}])$ is (for instance) the simplicial
chain complex of $M$ with local coefficients in
$\Z[\Deck]=\Z[t,t^{-1}]$.

Finally, the Novikov homology $HN_{\ast}(M,u)$ is the homology of
$CN_{\ast}(M,u)$.

\begin{proposition}\label{prop:HNexample}
  With the above notations, we have $HN_{0}(M,u)=0$, and for
  $i=1,2$~:
  $$
  HN_{i}(M,u)=H_{i}(X)\underset{\Z}{\otimes}\Lambda.
  $$
\end{proposition}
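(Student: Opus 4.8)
The plan is to compute the Novikov homology of $M = \T^n \sharp X$ by setting up a Mayer--Vietoris argument adapted to the minimal integration cover, then exploiting flatness of the Novikov ring $\Lambda$ over $\Z[t,t^{-1}]$. First I would decompose $M$ (away from a point) as the union of $\T^n$-part and an $X$-part, glued along a small sphere $S^{n-1}$; equivalently, set $U = \T^n \setminus \{pt\}$ and $V = X \setminus \{pt\}$, so $U \cap V \simeq S^{n-1}$. The class $u$ is pulled back from the torus and vanishes on all of $X$, hence on $V$ and on $U \cap V$; this means the local system $\Z[t,t^{-1}] = \Z[\Deck]$ restricts trivially (as the constant $\Z[t,t^{-1}]$-module) to $V$ and to $U\cap V$, while on $U$ it is the genuinely nontrivial local system coming from the first $\S^1$ factor. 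The key homological input is then: $H_*(V;\Z) = \widetilde{H}_*(X;\Z)$ in the relevant range (for $* \le n-2$, removing a point from the $n$-manifold $X$ doesn't change low-degree homology), $H_*(U\cap V;\Z[t,t^{-1}]) = H_*(S^{n-1})\otimes\Z[t,t^{-1}]$ which vanishes in degrees $1,2$ since $n \ge 4$, and $H_*(U;\Z[t,t^{-1}])$ is the (twisted) homology of a punctured torus, which one computes directly — I expect it to vanish in low positive degrees once tensored up to $\Lambda$, precisely because $\Lambda$ inverts $(t-1)$-type obstructions (this is the standard fact that the Novikov homology of $\T^n$ with respect to an irrational — or here, primitive integral — class vanishes entirely).

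Concretely, I would run the Mayer--Vietoris sequence for the pair $(U,V)$ with coefficients in $\Z[t,t^{-1}]$, then apply $\Lambda \otimes_{\Z[t,t^{-1}]} -$. Since $\Lambda$ is flat over $\Z[t,t^{-1}]$ (it is a localization-completion; this is standard and used implicitly throughout Novikov theory), tensoring commutes with homology, so I get a Mayer--Vietoris sequence computing $HN_*(M,u)$ directly:
$$
\cdots \to HN_*(U\cap V) \to HN_*(U) \oplus HN_*(V) \to HN_*(M,u) \to HN_{*-1}(U\cap V) \to \cdots
$$
Now $HN_*(U\cap V) = H_*(S^{n-1}) \otimes \Lambda$ vanishes in degrees $1,2$ (and in degree $0$ it is just $\Lambda$, matching the sphere); $HN_*(U)$ is the Novikov homology of the punctured torus with respect to $d\theta_1$, which I claim vanishes in degrees $0,1,2$ — one checks this either by noting the punctured torus deformation retracts onto a wedge/skeleton and computing the twisted chain complex with the $t$-action explicitly, where the differential becomes an isomorphism after $\otimes\Lambda$, or by Mayer--Vietoris again comparing to the closed torus; and $HN_*(V) = \widetilde H_*(X)\otimes\Lambda = H_*(X)\otimes\Lambda$ in degrees $1,2$ since $u$ restricts to $0$ on $V$ so the Novikov coefficients are untwisted there, giving $HN_i(V) = H_i(V;\Z)\otimes\Lambda = H_i(X;\Z)\otimes\Lambda$ for $1\le i\le n-2$. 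Feeding these into the sequence: in degree $0$, the vanishing of $HN_0(U)$ and $HN_0(V)$ (wait — $HN_0(V) = \Z\otimes\Lambda \ne 0$; one must instead track that $HN_0(U\cap V)\to HN_0(U)\oplus HN_0(V)$ has the right cokernel, and use $HN_0(U)=0$ together with the connecting map) forces $HN_0(M,u)=0$; in degrees $1,2$, the terms $HN_*(U\cap V)$ and $HN_*(U)$ all vanish, so the sequence collapses to $HN_i(M,u) \cong HN_i(V) = H_i(X;\Z)\otimes\Lambda$ for $i=1,2$.

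The main obstacle I anticipate is the careful bookkeeping of which pieces carry a twisted local system and which do not — in particular getting the degree-$0$ statement $HN_0(M,u)=0$ exactly right, since $U$ and $U\cap V$ do not individually have vanishing $H_0$ and the cancellation happens through the connecting homomorphism; and secondly, justifying the vanishing $HN_0(U)=HN_1(U)=HN_2(U)=0$ for the punctured torus, which requires either an explicit twisted-chain-complex computation or a clean reduction to the known vanishing of $HN_*(\T^n, d\theta_1)$ via excision/Mayer--Vietoris around the removed point (the removed disc contributes an $S^{n-1}$ which is harmless in degrees $1,2$ as $n\ge 4$, so $HN_*(U) \cong HN_*(\T^n,u)$ in those degrees, and the latter is classically zero). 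A further minor point is confirming $H_i(X\setminus\{pt\};\Z) = H_i(X;\Z)$ for $i \le n-2$, which follows from the long exact sequence of the pair $(X, X\setminus\{pt\})$ together with $H_*(X, X\setminus\{pt\};\Z) = \widetilde H_*(S^n;\Z)$ concentrated in degree $n \ge 4$. Everything else is formal once flatness of $\Lambda$ over $\Z[t,t^{-1}]$ is invoked.
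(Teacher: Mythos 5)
Your argument is correct and is essentially the paper's proof: the paper runs the same Mayer--Vietoris decomposition, just upstairs in the minimal integration cover (splitting $\Cov{M}$ into $(\R\times\T^{n-1})\setminus\{p_{k}\}$ and a disjoint union of $\Z$ punctured copies of $X$, glued along $\Z$ copies of $\S^{n-1}\times[0,1]$, harmless in degrees $1,2$ since $n\geq4$) and tensors with $\Lambda$ afterwards by flatness, whereas you work downstairs with local coefficients and tensor first --- an equivalent repackaging. The torus contribution is killed by the same mechanism you invoke, namely that $t$ acts trivially on $H_{i}(\R\times\T^{n-1})$ while $(1-t)$ is invertible in $\Lambda$ (which is also what underlies the classical vanishing of $HN_{*}(\T^{n},d\theta_{1})$ you cite), and the degree-$0$ statement is standard for $u\neq0$.
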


\begin{proof}
  Vanishing of $HN_{0}(M,u)$ whenever $u\neq 0$ is a
  classical and general feature of Novikov homology.

  To compute  $HN_{i}(M,u)$ ($i=1,2$)  we use  the  universal coefficient
  formula and the flatness of the Novikov ring $\Z ((t))$  as a
  $\Z[t,t^{-1}]$-module \cite[Theorem 1.8, p. 339]{Pajitnov2006}, cf.
  \cite[Appendix C]{LO1995}

  \begin{equation*}
  %\label{eq:HNs1}
    HN_i (M, u)= H_i (\Cov{M})\otimes _{\Z[t,t^{-1}]}  \Z((t)).
  \end{equation*}

  To compute $H_{i}(\Cov{M})$, we split $\Cov{M}$ as the union of
  \begin{itemize}
  \item
    $A=(\R\times \T^{n-1})\setminus \{ p_{k},\ k\in\Z\}$   where the
    $p_{k}$ are the lifts of the point where the connected sum was
    performed,
  \item
    $B=\sqcup_{k\in\Z}\,(X\setminus \{ p\})_{k}$ the disjoint union of
    $\Z$ punctured copies of $X$.
  \end{itemize}

  The overlapping region $A\cap B$ is the disjoint union of $\Z$ copies
  of $\S^{n-1}\times[0,1]$, which has vanishing homology groups in degrees
  $1$ and $2$ provided $n\geq4$.

  By the Mayer-Vietoris sequence theorem we have for  $i =1, 2$
  $$
  H_i (\Cov{M}) =  H_i (X)^{\Z} \oplus
  H_i (\R \times  \T^{n-1}).
  $$

  Since $\Deck=\Z$ acts on $A = \R \times  \T^{n-1}\setminus \{ p_{k},
  k\in\Z\}$ by translation along the $\R$ factor, the induced action on
  the torus factor is trivial, and hence for all $x\in
  H_{i}(\R\times\T^{n-1})\otimes\Lambda$, we have
  $$
  (1-t)x=0.
  $$
  Since $(1-t)$ is invertible in $\Lambda$, we conclude that $x=0$, and
  finally, observing that $H_i (X)^{\Z} = H_i (X)\otimes{\Z[t,t^{-1}]}$,
  we have~:
  $$
  HN_i (\Cov{M})
  =  (H_i (X)\otimes{\Z[t,t^{-1}]})\underset{\Z[t,t^{-1}]}\otimes\Lambda
  =  H_i (X)\underset{\Z}{\otimes}\Lambda
  $$
\end{proof}

The universal cover $\UCov{M}$ of $M$ can be described as a tree,
with two kinds $a$ and $b$ of vertices~:
\begin{itemize}
\item
  $a$-vertices are associated to copies of $\R^{n}\setminus\Z^{n}$ where
  the lattice corresponds to the lifts of the attaching point on the
  torus, and have one edge for each such point to a vertex of type $b$,
\item
  $b$-vertices are associated to copies of $\UCov{X}\setminus\{g\cdot
  p,g\in\pi_{1}(X)\}$, where $\UCov{X}$ is the universal cover of $X$,
  and the points removed are the lifts of the attaching point on $X$, and
  have one edge for each such lift to a vertex of type $a$.
\end{itemize}
The edges then correspond to the lifts of the annulus introduced in
the connected sum.

The  Novikov homology $HN_{*}(M,u;\Z[\pi_1(M)])$ on the universal
cover is less pleasant to describe than on the minimal integration
cover, but we will only use the following basic facts~:
\begin{proposition}\label{prop:HNtildeExample}
  The minimal number of generators of the first Novikov homology group
  $HN_{1}(M,u;\Z[\pi_{1}(M)])$ associated to the universal cover is at most
  $\mu(\pi_{1}(X))$, where $\mu(\pi_{1}(X))$ is the minimal number of
  generators of~$\pi_1(X)$.

  If $\pi_{2}(X)=0$, then the second Novikov homology group associated to
  the universal cover vanishes~:
  $$
  HN_{2}(M,u;\Z[\pi_{1}(M)])=0
  $$
\end{proposition}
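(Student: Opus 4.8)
The plan is to compute the relevant homology groups of the universal cover $\UCov{M}$ using the tree decomposition already described, then apply the universal coefficient / flatness argument exactly as in the proof of Proposition \ref{prop:HNexample}, but now over the group ring $\Z[\pi_1(M)]$ and its Novikov completion rather than over $\Z[t,t^{-1}]$ and $\Z((t))$.

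\textbf{First}, I would set up the Mayer–Vietoris decomposition of $\UCov{M}$ adapted to the tree structure. Write $\UCov{M}=\Aa\cup\Bb$, where $\Aa$ is the disjoint union of the (punctured) $a$-vertices, i.e. copies of $\R^n\setminus\Z^n$, and $\Bb$ is the disjoint union of the (punctured) $b$-vertices, i.e. copies of $\UCov{X}\setminus\{g\cdot p\}$. The overlap $\Aa\cap\Bb$ is a disjoint union of copies of $\S^{n-1}\times[0,1]$, one per edge of the tree, and since $n\geq 4$ these have vanishing $H_1$ and $H_2$. The key point is that everything is equivariant for the $\pi_1(M)$-action, so Mayer–Vietoris is a sequence of $\Z[\pi_1(M)]$-modules. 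Then, exactly as before, for $i=1,2$ one gets (after tensoring with the Novikov completion, using flatness) that $HN_i(M,u;\Z[\pi_1(M)])$ is built from the equivariant homology of $\Aa$ and of $\Bb$ with Novikov coefficients.

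\textbf{Second}, I would argue that the "$\Aa$-part" again drops out after Novikov completion. The elements of the deck group that translate along the tree (there is a distinguished $\Z$-direction coming from $u$, generated by some $t\in\pi_1(M)$ with $u(t)\neq 0$) act on the $a$-vertex pieces essentially by translation, so that the relevant homology classes coming from $\R^n\setminus\Z^n$-factors satisfy $(1-t)x=0$; since $1-t$ is invertible in the Novikov ring $\widehat{\Z[\pi_1(M)]}_u$, these classes vanish. For the first statement this gives an epimorphism from an equivariant $H_1$ of $\Bb$ — essentially $H_1(\UCov{X}\setminus\{g\cdot p\})\otimes(\text{something})$ — onto $HN_1(M,u;\Z[\pi_1(M)])$; since $\UCov{X}$ is simply connected, the homology of the punctured universal cover $\UCov{X}\setminus\{g\cdot p, g\in\pi_1(X)\}$ is a free $\Z[\pi_1(X)]$-module in degree $n-1$ but in degree $1$ it reduces (for $n\geq 4$) to what is generated by the attaching spheres, and a choice of $\mu(\pi_1(X))$ loops in $X$ realizing generators of $\pi_1(X)$ lifts to a generating set of size $\mu(\pi_1(X))$. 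For the second statement, if $\pi_2(X)=0$ then $\pi_2(\UCov{X})=0$, hence $H_2(\UCov{X})=0$ by Hurewicz, and removing a discrete set from an $n$-manifold with $n\geq 4$ does not change $H_2$; combined with the vanishing of the $\Aa$-part and of $H_1,H_2$ of the overlap, Mayer–Vietoris forces $H_2(\UCov{M})$ to be built only from classes killed by $(1-t)$, so $HN_2(M,u;\Z[\pi_1(M)])=0$.

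\textbf{The main obstacle} I expect is bookkeeping the $\Z[\pi_1(M)]$-module structure carefully enough to justify the "$(1-t)x=0 \Rightarrow x=0$" step: unlike the minimal cover case where $\Deck=\Z$ acts transparently by translation and the torus-directions are fixed, here $\pi_1(M)=\Z^n\freeprod\pi_1(X)$ (or a related amalgam) acts on the tree with non-trivial vertex stabilizers, so one must identify precisely which subring of $\Z[\pi_1(M)]$ acts invertibly after completion and check that the $\Aa$-contribution is a module over it. A clean way around this is to invoke the tree structure directly: the equivariant chain complex of $\UCov{M}$ splits as an (infinite) graph of groups complex, and after Novikov completion the $\R$-translation direction makes the whole $A$-part acyclic in low degrees by a telescoping/geometric-series argument, leaving only the $B$-contribution, which is a direct sum over $a$-vertices of copies of $H_*(\UCov{X}\setminus\{pts\})$ induced up to $\Z[\pi_1(M)]$. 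I would only need the resulting bounds, not an exact computation, so I can afford to be slightly lossy and merely exhibit $\mu(\pi_1(X))$ generators in degree $1$ and prove exact vanishing in degree $2$ under $\pi_2(X)=0$.
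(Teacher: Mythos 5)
There is a genuine gap, and it sits in the very first step of your plan. You propose to run "the universal coefficient / flatness argument exactly as in the proof of Proposition \ref{prop:HNexample}, but now over $\Z[\pi_{1}(M)]$ and its Novikov completion". That argument is \emph{not} available for the universal cover: the flatness of $\Z((t))$ over $\Z[t,t^{-1}]$ used in Proposition \ref{prop:HNexample} is a special feature of the abelian (minimal-cover) case, and for the completion of $\Z[\pi_{1}(M)]$ it must fail here. Indeed, if the Novikov completion were flat over $\Z[\pi_{1}(M)]$, then $HN_{1}(M,u;\Z[\pi_{1}(M)])$ would equal the completion tensored with $H_{1}(\UCov{M})$; but $\UCov{M}$ is simply connected, so $H_{1}(\UCov{M})=0$, and you would conclude $HN_{1}(M,u;\Z[\pi_{1}(M)])=0$ --- contradicting the non-vanishing of this group that the paper asserts and uses in Section \ref{sec:ComparisonOnUniversalCover}. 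The same point shows that your Mayer--Vietoris computation of $H_{1}$ cannot see the generators at all: $H_{1}$ of each punctured copy of $\UCov{X}$ vanishes for $n\geq4$ (your phrase about degree $1$ being ``generated by the attaching spheres'' is off --- those spheres live in degree $n-1$), and a loop representing $g_{i}\in\pi_{1}(X)$ lifts to an \emph{open} path from $\tilde p$ to $g_{i}\cdot\tilde p$ in $\UCov{X}$, not to a cycle of $\Bb$. It only becomes a cycle in the Novikov complex after being completed by paths running to $-\infty$, which is an intrinsically chain-level, completed-coefficient phenomenon invisible to $H_{*}(\UCov{M})$.

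The paper therefore works directly with Novikov chains rather than with the homology of $\UCov{M}$: for degree $1$ it explicitly constructs cycles $\bar\gamma_{g_{i}}$ and $\bar\gamma_{x_{j}}$ by extending lifted paths to $-\infty$, shows every Novikov $1$-cycle is cobordant to a combination of these, and kills the $\bar\gamma_{x_{j}}$ by the translation/$(1-t)$ trick; for degree $2$ it chooses a cell structure avoiding the connecting annulus, splits a Novikov $2$-cycle as $\sum_{\lambda}s_{\lambda}+\sum_{\mu}s_{\mu}$, uses boundedness of $f_{\alpha}$ on each $B_{\mu}$ to see that $s_{\mu}$ is a \emph{finite} cycle bounding in $\UCov{X}$ (here $\pi_{2}(X)=0$ enters), checks that the chosen primitives assemble into a legitimate Novikov chain, and disposes of the $s_{\lambda}$ via invertibility of $1-t_{1}$. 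Your closing paragraphs gesture in this direction (explicit generators, the $(1-t)$ argument, working with the equivariant chain complex), and the degree-$2$ part of your sketch is essentially salvageable along the paper's lines; but as written the proposal's main mechanism would not produce the degree-$1$ bound and relies on a flatness statement that is false in this setting. To repair it, drop the homology-of-$\UCov{M}$/flatness route entirely and argue at the level of Novikov cycles, paying particular attention to why the infinite sums of primitives you produce are again Novikov chains.
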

\begin{remark}
  Notice that $\pi_{2}(X)\simeq H_{2}(\UCov{X};\Z)$ where $\UCov{X}$ is
  the universal cover of $X$, and is naturally a $\Z[\pi_{1}(X)]$-module.
\end{remark}

\begin{proof}
  Given a family $(g_{1},\dots,g_{\mu})$ that generates $\pi_{1}(X)$, for
  each $i$, consider a preferred lift $\gamma_{g_{i}}$ to $\UCov{X}$ of a
  loop in $X$ based at the attaching point $p$ representing $g_{i}$.
  Similarly, in $\R^{n}$ consider for $1\leq j\leq n$, the path
  $\gamma_{x_{j}}$, that describes the $[0,1]$-segment along the $j^{th}$
  coordinate axis.

  Finally, choose a preferred path $\delta:[0,+\infty)\to\R^{n}$ starting
  at the origin, such that $\lim_{t\to+\infty} x_{1}(\delta(t))=-\infty$,
  and that avoids $\Z^{n}$ for $t>0$, and use the suitable shifts of
  $\delta^{-1}$ and $\delta$ to extend each $\gamma_{g_{i}}$ and
  $\gamma_{x_{j}}$ into a path $\R\to\UCov{M}$ along which the coordinate
  $x_{1}$ runs from and to $-\infty$~: each such path then defines a
  Novikov $1$-cycle denoted by $\bar\gamma_{g_{i}}$ or
  $\bar\gamma_{x_{j}}$.
    Notice that $\bar\gamma_{x_{j}}$ and $t\cdot\bar\gamma_{x_{j}}$ are
    cobordant so that the Novikov homology class $[\bar\gamma_{x_{j}}]$
    vanishes in $HN_{1}(M,u~;\Z[\pi_{1}(M)])$.

  It is not hard to see now that every Novikov $1$-cycle $\sigma$ in
  $HN_{1}(M,u~;\Z[\pi_{1}(M)])$, is cobordant to a cycle $\sigma'$ of the
  form
  $$
  \sigma' = \sum_{\substack{g\in\pi_{1}(M)\\
      \gamma\in\{\gamma_{g_{i}}\}\cup\{\gamma_{x_{j}}\}}} n_{g}\ g\cdot\gamma.
  $$
  Moreover, $\sigma'$ being a cycle, the boundary extensions when
  replacing $\gamma$ by $\bar\gamma$ in the previous sum do not
  contribute, and we have
  $$
  \sigma' =
  \sum_{\substack{g\in\pi_{1}(M)\\
      \gamma\in\{\gamma_{g_{i}}\}\cup\{\gamma_{x_{j}}\}}} n_{g}\ g\cdot\bar\gamma
  = \sum_{_{\gamma\in\{\gamma_{g_{i}}\}\cup\{\gamma_{x_{j}}\}}}\lambda_{\gamma}\bar\gamma,
  $$
  where the $\lambda_{\gamma}$ are Novikov coefficients in
  $\Cov{\Lambda}$. Finally, since $[\bar\gamma_{x_{j}}]=0$ for each $j$, we obtain
  the desired result for the first Novikov homology group.

  \bigskip

  To describe the second Novikov homology group, it is convenient to use
  the cellular homology associated to a cell decomposition that is well
  suited to the connected sum. In  particular, we choose a cell
  decomposition of $M$ in which no $2$-cell meets the annulus
  $C=\S^{n-1}\times[0,1]$ introduced when performing the connected sum.

  As a consequence, the $2$-cells
  split in two disjoint families, according to the
  component $A=\T^{n}\setminus C$ or $B=X\setminus C$ of $M\setminus C$
  they are contained in.

  The collection of all the lifts of the cells of $M$  then defines a
  cell decomposition of the universal cover $\UCov{M}$. Since the
  complement in $\UCov{M}$ of the lifts of the annulus $C$ is the disjoint
  union $\sqcup_{\lambda}A_{\lambda}\sqcup_{\mu}B_{\mu}$ of copies of
  open sets $A$ and $B$ where
  \begin{itemize}
  \item
    $A=\R^{n}\setminus\Z^{n}$ is the universal cover $\R^{n}$ of $\T^{n}$
    with the lifts of the attaching point removed.
  \item
    $B=\UCov{X}\setminus\{\pi_{1}(X)\cdot p\}$ is the universal cover
    $\UCov{X}$ of $X$ with the lifts of the attaching point removed,
  \end{itemize}
  we can again split the collection of $2$-cells according to this
  collection of components.

  Consider now a Novikov $2$-cycle $s$ (using the cellular homology). It
  can be split according to the components its cells belong to~:
  $$
  s = \sum_{\lambda}s_{\lambda}+\sum_{\mu}s_{\mu}
  $$
  where $s_{\lambda}$ is supported in $A_{\lambda}$ and $s_{\mu}$ in
  $B_{\mu}$. Then because the $A_{\lambda}$'s and $B_{\mu}$'s are
  disjoint we have~:
  \begin{gather*}
     \forall\lambda, \partial s_{\lambda}=0
    \quad\text{ and }\quad
     \forall\mu, \partial s_{\mu}  =0.
  \end{gather*}
  Moreover, a primitive $f_{\alpha}$ of a $1$-form $\alpha$ in the class
  $u$ is bounded on $B_{\mu}$. This implies that $s_{\mu}$ is finite, and
  hence defines a $2$-cycle in $\UCov{X}$.

  Since $H_{2}(B_{\mu})=H_{2}(\UCov{X})=\pi_{2}(X)=0$, this implies that
  $s_{\mu}$ is a boundary~: there is a chain $\omega_{\mu}$ in $B_{\mu}$
  such that $s_{\mu}=\partial \omega_{\mu}$.

  The chain $\omega = \sum_{\mu}\omega_{\mu}$ is then a Novikov
  chain (above any level, there are only finitely many $\mu$ such
  that $s_{\mu}\neq0$, so that $\omega$ is finite above any level),
  and
  $$
  \partial \omega = \sum_{\mu}s_{\mu}.
  $$

  In particular, the homology class $[s]$ can be written as
  $$
  [s] = \sum_{\lambda} [s_{\lambda}]
  $$

  On the other hand, denote by $t_{1}\in\pi_{1}(M)$ the loop associated
  to the first coordinate in the torus. It acts on each $A_{\lambda}$ as
  the translation along $(1,0,\dots,0)$, and any Novikov $2$-cycle $\tau$
  in $A_{\lambda}$ is homologous to  $t_{1}\cdot\tau$ (this is clear in
  $\R^{n}$, and the points that are removed from $A_{\lambda}$ are
  sufficiently high codimensional to be avoided).

  As a consequence, for all $\lambda$~:
  $$
  t_{1}\cdot[s_{\lambda}] = [s_{\lambda}],
  $$
  (where the brackets denote the homology class). We conclude that
  $$
  (1-t_{1})\cdot [s_{\lambda}]=0,
  $$
  and since $(1-t_{1})$ is invertible
  in the Novikov ring, $[s_{\lambda}]=0$, and finally $[s]=0$.
\end{proof}

\subsection{Novikov fundamental group for $\T^{n}\sharp X$}

In this section we compute and investigate the Novikov fundamental
group associated to the minimal integration cover in the particular
situation described above where $M=\T^{n}\sharp X$. For convenience,
the fundamental group of $X$ will be denoted by $G$.

\subsubsection{Computation of the Novikov fundamental group}
The Novikov fundamental group $\pi_{1}(\T^{n}\sharp X,u)$ naturally
appears as a completion of the free product of infinitely many
copies of $G$. More precisely, consider the infinite sequence of
groups $(G_{k})_{k\in\Z}$ where all the groups $G_{k}$ are copies of
$G$, and define for each $h\in\Z$ the group
$$
\Pi_{h}=\freeprod_{k\geq h}G_{k}.
$$
For two integers $h,h'$ with $h<h'$, we have
$$
\Pi_{h'} = \Pi_{h}/(G_{k}=1, h\leq k <h').
$$
Hence there are projections
\begin{equation}
  \label{eq:formalzip}
\zip{h}{h'}:\Pi_{h}\to\Pi_{h'}
\end{equation}
such that for $h<h'<h''$, we have~:
$$
\zip{h}{h''}=\zip{h'}{h''}\circ\zip{h}{h'}.
$$

Define then
$$
\Pi = \varprojlim_{h}\Pi_{h}.
$$
\begin{proposition}
  With the above notations, $\pi_{1}(\Cov{M},u)=\Pi$.
\end{proposition}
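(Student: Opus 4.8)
The plan is to reduce to one convenient choice of $1$-form and primitive, read off each ``zipped'' group $\zipped{\pi_{1}(f_{\alpha})}{h}$ by a van Kampen argument, recognize the transition maps as the projections \eqref{eq:formalzip}, and pass to the projective limit.

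First I would invoke the invariance theorem to fix a good primitive on $\Cov{M}=(\R\times\T^{n-1})\underset{\Z}{\sharp}X$: choose $\alpha$ and $f_{\alpha}$ so that $f_{\alpha}$ agrees with the first coordinate $x_{1}$ of the cylinder away from the connected-sum regions, and takes values in $[k-\tfrac13,k+\tfrac13]$ on the copy $X_{k}$ of $X$ glued at the lift $p_{k}$ of the attaching point ($k\in\Z$, $p_{k}$ at height $k$). Then every half-integer level $h=m+\tfrac12$, $m\in\Z$, is regular, so $\sublevel{\Cov{M}}{h}$ is a codimension $0$ submanifold with boundary $\{f_{\alpha}=h\}\cong\{h\}\times\T^{n-1}$, and $\Cov{M}=\sublevel{\Cov{M}}{h}\cup\suplevel{\Cov{M}}{h}$ glued along this slice. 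Hence
$$
\zipped{\Cov{M}}{h}=\Cov{M}/\sublevel{\Cov{M}}{h}=\suplevel{\Cov{M}}{h}/\{f_{\alpha}=h\},
$$
where $\sublevel{\Cov{M}}{h}$ is connected and $\suplevel{\Cov{M}}{h}$ is the half-infinite cylinder $[h,+\infty)\times\T^{n-1}$, with a small ball removed near each $p_{k}$, $k\ge m+1$, and a punctured copy of $X_{k}$ glued in there.

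Next I would compute $\pi_{1}(\zipped{\Cov{M}}{h})$. Collapsing the slice $\{h\}\times\T^{n-1}$ turns the half-infinite cylinder into the (half-open) cone on $\T^{n-1}$, which is contractible; removing the balls and collapsing changes nothing at the level of $\pi_{1}$ since those balls sit strictly above level $h$. Writing $\suplevel{\Cov{M}}{h}/\{f_{\alpha}=h\}$ as the increasing union of the subspaces retaining only finitely many connected-sum copies, van Kampen's theorem applies at each stage — the gluing spheres $\S^{n-1}$ are simply connected because $n\ge4$ — and in the limit gives
$$
\zipped{\pi_{1}(f_{\alpha})}{h}=\pi_{1}(\zipped{\Cov{M}}{h})=\freeprod_{k\ge m+1}G_{k}=\Pi_{m+1}.
$$
(Equivalently, algebraically: $\pi_{1}(\Cov{M})=\ker u=\Z^{n-1}\freeprod\big(\freeprod_{k\in\Z}G_{k}\big)$, and since $\pi_{1}(\zipped{\Cov{M}}{h})=\pi_{1}(\Cov{M})/\langle\langle\,\mathrm{im}\,\pi_{1}(\sublevel{\Cov{M}}{h})\,\rangle\rangle$ with $\pi_{1}(\sublevel{\Cov{M}}{h})=\Z^{n-1}\freeprod\big(\freeprod_{k\le m}G_{k}\big)$ mapping onto the corresponding sub–free-product, normally killing those free factors leaves exactly $\freeprod_{k\ge m+1}G_{k}$.)

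Finally I would identify the transition maps: for $m<m'$ and the levels $h=m+\tfrac12<h'=m'+\tfrac12$, the map $\zip{h}{h'}$ is induced by the collapse $\zipped{\Cov{M}}{h}\to\zipped{\Cov{M}}{h'}$, which additionally collapses $X_{m+1},\dots,X_{m'}$; under the identifications above this is precisely the projection $\Pi_{m+1}\to\Pi_{m'+1}$ killing the factors $G_{m+1},\dots,G_{m'}$, i.e.\ the map $\zip{m+1}{m'+1}$ of \eqref{eq:formalzip}. Since the half-integer levels are cofinal in $\R$, the projective system $(\zipped{\pi_{1}(f_{\alpha})}{h})_{h}$ is isomorphic to $(\Pi_{m})_{m\in\Z}$, and therefore
$$
\pi_{1}(\Cov{M},u)=\varprojlim_{h}\zipped{\pi_{1}(f_{\alpha})}{h}\cong\varprojlim_{m}\Pi_{m}=\Pi.
$$
The main obstacle is the homotopy-theoretic input of the second step: one must check carefully that collapsing the bottom of the half-infinite cylinder (with its infinitely many punctures) really yields a simply connected space and that van Kampen may be applied across infinitely many connected-sum pieces — this is exactly where the direct-limit presentation and the hypothesis $n\ge4$ (so $\pi_{1}(\S^{n-1})=1$) are used. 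The rest is bookkeeping with free products and the reindexing $h\mapsto\lfloor h\rfloor+1$.
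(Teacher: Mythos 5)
Your proof is correct and follows essentially the same route as the paper's: choose a $1$-form whose primitive separates the successive copies of $X$, identify each $\zipped{\pi_{1}(f_{\alpha})}{h}$ with a free product $\freeprod_{k\geq m}G_{k}=\Pi_{m}$, check that the transition maps are the projections of \eqref{eq:formalzip}, and pass to the projective limit over a cofinal family of levels. The only difference is that you spell out the van Kampen computation (contractibility of the collapsed half-cylinder, simple connectivity of the gluing spheres for $n\geq 4$) that the paper leaves implicit.
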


\begin{proof}
  Pick a $1$-form $\alpha$ that ``separates'' the different copies of $X$,
  i.e. such that a primitive $f_{\alpha}$ has a level $h_{0}$, that can
  be supposed to be $0$, that does not touch any lift of $X$ nor of the
  annulus where the connected sum takes place.

  By rescaling the cohomology class $u$, we can suppose it has period
  $1$, i.e. that $\alpha(\pi_{1}(M))=\Z$, so that the integral levels of
  $f_{\alpha}$ separate the different lifts of $X$.

  Then for any $h\in\Z$, we have
  $$
  \zipped{\pi_{1}(f_{\alpha})}{h} =
  \pi_{1}(\zipped{\Cov{M}}{h}) =
  \freeprod_{k\geq h}\pi_{1}(X) = \Pi_{h}.
  $$

  Since  the projections $\zipped{\Cov{M}}{h}\to\zipped{\Cov{M}}{h'}$
  given by inclusion of sublevels induce the same maps $\zip{h}{h'}$ as
  in \eqref{eq:formalzip}, we derive $\pi_{1}(\Cov{M},u)=\Pi$.
\end{proof}

With the notations above we obviously have
\begin{equation*}
  %\label{eq:muDTC-exple}%
  \mu_{DTC}(\pi_{1}(\Cov{M},u))\leq \mu(\pi_{1}(X)),
\end{equation*}
where $\mu(\pi_{1}(X))$ is the minimal number of generators of
$\pi_{1}(X)$.

This estimate is straightforward, but the reverse inequality, which
we expect to hold, is surprisingly far from obvious. We will limit
ourselves to the following statement~:

\begin{proposition}\label{prop:noncyclic}
  Unless $\pi_{1}(X)$ is a cyclic group,
  $\mu_{DTC}(\pi_{1}(\Cov{M},u))\geq 2$. In particular, when
  $X=S\times\S^{n-3}$, we have $\mu_{DTC}(\pi_{1}(\Cov{M},u))=2$.
\end{proposition}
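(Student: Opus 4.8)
The plan is to argue by contradiction: suppose $\mu_{DTC}(\pi_1(\Cov{M},u))=1$, witnessed by a single element $g$, and deduce that $G=\pi_1(X)$ is cyclic. The first step is to translate the hypothesis into a statement about the groups $\Pi_h=\freeprod_{k\ge h}G_k$ using the previous proposition $\pi_1(\Cov{M},u)=\Pi=\varprojlim_h\Pi_h$. Since the transition maps $\Pi_h\to\Pi_{h'}$ are surjective (they are the quotients killing the bottom factors), the projections from the limit are surjective; the same holds for the subsystem of images of $\PreSpan{\{g\}}$, so $\Span{\{g\}}=\Pi$ forces the image of $\PreSpan{\{g\}}$ in each $\Pi_h$ to be all of $\Pi_h$. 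In other words, for every $h$ the $\Deck$-orbit $\{\zip{h}{}(t^{n}g)\}_{n\in\Z}$ generates $\freeprod_{k\ge h}G_k$, where $t$ generates $\Deck\cong\Z$. Replacing $g$ by a deck translate, I may assume the top free factor "seen" by $g$ is $G_{0}$; then $\zip{h}{}(t^{n}g)$ is obtained by shifting the reduced word of $g$ restricted to the factors $[h-n,\,h-n+N]$ (a "window" of $g$), and its content in the slot $G_{j}$ is the element $c_{j-n}\in G$ (the image of $g$ under the retraction onto the $(j-n)$-th factor), which vanishes for $j-n>0$. Retracting onto one factor already yields $\langle c_{m}:m\le 0\rangle=G$, which will be used throughout.

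For the core argument I would fix $N$ and retract $\Pi_h$ onto the free factor $\freeprod_{k=h}^{h+N}G_{k}\cong G^{*(N+1)}$; its generation by the $\Deck$-orbit of $g$ persists. On one side, Grushko's theorem gives $\mu(G^{*(N+1)})=(N+1)\mu(G)\ge 2(N+1)$ as soon as $G$ is not cyclic. On the other side, each $t^{n}g$ maps to (a deck-shift of) the width-$(N+1)$ window of $g$, and by the normalization above only the windows meeting the "support" of $g$ survive nontrivially; when $g$ has finite width only $O(N)$ of them do, so $G^{*(N+1)}$ is generated by $O(N)$ elements, and comparison with Grushko as $N\to\infty$ forces $\mu(G)=1$ — a contradiction. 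This Grushko comparison is the main idea. Independently, I would supply a second, width-free argument that handles the case where $H_{1}(X;\Z)=\abelian{G}$ is not cyclic: abelianizing and identifying $(\abelian{G})^{N+1}$ with $\abelian{G}\otimes_{\Z}\Z[t]/(t^{N+1})$, the abelianized windows are the $\Z[t]/(t^{N+1})$-multiples of one element, so their span is a \emph{cyclic} $\Z[t]/(t^{N+1})$-submodule; since $\abelian{G}$ is non-cyclic, reducing modulo $t$ shows this module is not cyclic, hence the span is proper and cannot equal $(\abelian{G})^{N+1}$.

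The delicate point I expect is the remaining case: $\abelian{G}$ cyclic (in particular $G$ perfect, which is exactly the situation of the examples $X=S\times\S^{n-3}$, where neither $\abelian{G}$ nor any characteristic abelian quotient detects the non-cyclicity of $G$) together with a generator $g$ of \emph{unbounded} width, so that infinitely many windows survive and the naive count above fails. Here one has to exploit the rigidity of the family of windows (the projection of $t^{n}g$ is obtained from that of $t^{n+1}g$ by deleting the bottom factor and shifting) together with the elementary fact that a finitely generated group such as $G^{*(N+1)}$ is already generated by a finite subset of any generating set, in order to bound the number of windows genuinely needed and re-run the Grushko comparison. Granting this, one gets $\mu_{DTC}(\pi_1(\Cov{M},u))\ge 2$ for every non-cyclic $\pi_1(X)$; combined with the elementary inequality $\mu_{DTC}(\pi_1(\Cov{M},u))\le\mu(\pi_1(X))$ and the fact that $\mu(\pi_1(S))=2$ for the Poincaré sphere, this yields $\mu_{DTC}(\pi_1(\Cov{M},u))=2$ when $X=S\times\S^{n-3}$.
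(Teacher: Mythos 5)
Your reduction to the statement ``for every $h$, the images $\zip{\infty}{h}(t^{n}g)$ generate $\Pi_{h}=\freeprod_{k\geq h}G_{k}$ as an ordinary subgroup'' is correct, and your Grushko comparison (retract onto $\freeprod_{k=h}^{h+N}G_{k}\cong G^{\ast(N+1)}$, count the nontrivial windows, let $N\to\infty$) does dispose of the case where $g$ has bounded width: there one gets $(N+1)\mu(G)\leq N+O(1)$, hence $\mu(G)\leq 1$. But this is not the hard case, and the remaining one is a genuine gap, not a loose end. An element of $\Pi=\varprojlim_{h}\Pi_{h}$ is generically an \emph{infinite} word whose projections $\zip{\infty}{h}(g)$ keep acquiring new letters as $h\to-\infty$ --- that is precisely what the completion buys --- so ``unbounded width'' is the generic situation, and there infinitely many of the $N+1$-windows of the orbit $\{t^{n}g\}$ are nontrivial. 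Your proposed repair, that a finitely generated group is generated by a finite subset of any generating set, gives finiteness but no bound: nothing prevents the minimal number of windows needed from exceeding $2(N+1)$, so the comparison with Grushko's $(N+1)\mu(G)$ cannot be ``re-run'' without a new idea. (Your fallback via abelianization also does not close this: the width-$(N+1)$ windows of $t^{n}g$ are truncations of shifts of an infinite word, not the $\Z[t]/(t^{N+1})$-multiples of a single element of $\abelian{G}\otimes\Z[t]/(t^{N+1})$ --- an upward shift imports letters from below the window --- so their span need not be a cyclic module, and reducing mod $t$ just recovers the subgroup generated by all letters of $\abelian{g}$, which is all of $\abelian{G}$.)

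The paper's proof avoids width considerations entirely by a normal-form analysis in the free product. First one shows (Lemmas \ref{lem:power_is_singleletter} and \ref{lem:top_is_single}) that if $\PreSpan{\{g\}}$ surjects onto every $\Pi_{h}$, the top nontrivial level $h_{0}$ of $g$ carries a single letter $a\in G\setminus\{1\}$. One then locates the first level $h_{1}$ at which the normal form of $g$ contains a letter outside $a^{\Z}$, writes $\omega=\zip{\infty}{h_{1}}(g)=A_{-}\omega'A_{+}$ with $A_{\pm}$ made of powers of $a$ below level $h_{0}$, and shows that for \emph{any} finite product $x=\prod_{i}s^{d_{i}}(g^{\nu_{i}})$, the normal form of $\zip{\infty}{h_{1}}(x)$ is obtained from concatenating the normal forms of the blocks $\omega'^{\beta_{j}}$ and of interstitial $a$-words with only boundary cancellations; since each $\omega'^{\beta_{j}}$ occupies at least two levels, $\zip{\infty}{h_{1}}(x)$ can never be a single letter $c\in G\setminus a^{\Z}$, which contradicts generation as soon as $G$ is not cyclic. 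If you want to salvage your approach you would need an analogue of this cancellation control to bound the windows genuinely needed; as written, the unbounded-width case --- the only one that matters --- is unproved.
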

\begin{remark}
From the usual group theory point of view, deck transformations turn
a single generator ``up to DTC'' into an infinite collection of
generators. Therefore, telling if a group has more than one
generator up to DTC is far from obvious in general.

To emphasize this issue, consider the same question for the infinite
cartesian product $\Pi'=G^{\Z}$ instead of the free product~: when
$G$ is the group given in \eqref{eq:PresentationOfpi1S} for
instance, $\Pi'$ has a single generator up to shift while neither
$G$ nor $\Pi'$ are cyclic...
\end{remark}

The end of this section is dedicated to the proof of proposition
\ref{prop:noncyclic}.

\begin{definition}
  Given a collection of groups $G_1, \dots, G_N$, we say that a reduced
  sequence is a sequence $g_1, \dots, g_m$ such that for $1\leq i\leq m$
  we have $g_i\in G_k$ for some $k$, $g_i\neq 1$ and $g_i, g_{i+1}$ are
  not in the same factor $G_l$ for some $l$.

  We say that $g=g_1\dots g_m\in \freeprod_{1\leq k\leq N}G_{k}$ is in
  the normal form if $g_1,\dots, g_m$ is a reduced sequence. According to
  the Normal Form Theorem for free product of groups, every element of
  the free product $\freeprod_{1\leq k\leq N}G_{k}$ can be written in a
  unique way in a normal form.
\end{definition}

\begin{definition}
  An element $g$ in a free product of groups $\freeprod_{1\leq k\leq
    N}G_{k}$ is said to be a single letter (resp. non trivial single
  letter) if its normal form has length $\leq 1$ (resp. $1$), i.e. if $g\in\
  G_{k_{0}}\subset \freeprod_{1\leq k\leq N}G_{k}$ (resp. $g\in
  G_{k_{0}}\setminus\{1\}$) for some index $k_{0}$.

  Similarly, an element $g$ in $\Pi$ will be said to be a single letter
  if $g\in G_{k}\subset \Pi$ for some $k$.
\end{definition}

\begin{lemma}\label{lem:power_is_singleletter}
  Let $g\in\freeprod_{k=1}^{N}G_{k}$ be an element of a finite free
  product of groups. If $g^{\nu}$ is a single letter for some $\nu\neq0$, then $g$
  itself is a single letter.
\end{lemma}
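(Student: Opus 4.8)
The plan is to use the Normal Form Theorem for free products, which is already invoked in the paper, and argue by contradiction on the normal-form length of $g$.

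Suppose $g = g_1 \cdots g_m$ is the normal form of $g$ with $m \geq 2$, so $g$ is not a single letter. The key observation is to understand the normal form of powers $g^\nu$. First consider $\nu > 0$. If $g_1$ and $g_m$ lie in \emph{different} factors (i.e., $g_m g_1$ does not collapse), then writing $g^\nu = (g_1 \cdots g_m)(g_1 \cdots g_m) \cdots (g_1 \cdots g_m)$, no cancellation occurs at any of the junctions, so the normal form of $g^\nu$ has length $m\nu \geq 2\nu \geq 2$, contradicting that $g^\nu$ is a single letter. If instead $g_1$ and $g_m$ lie in the \emph{same} factor $G_l$, then at each junction $g_m g_1$ multiplies within $G_l$; there are two sub-cases. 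If $g_m g_1 \neq 1$ in $G_l$, the normal form of $g^\nu$ is $g_1 \cdots g_{m-1}(g_m g_1)g_2 \cdots g_{m-1}(g_m g_1)\cdots g_{m-1} g_m$, which has length $m\nu - (\nu - 1) = (m-1)\nu + 1 \geq \nu + 1 \geq 2$, again a contradiction (using $m \geq 2$, so $m - 1 \geq 1$; and if $m = 2$ this gives length $\nu + 1 \geq 2$). If $g_m g_1 = 1$ in $G_l$, i.e. $g_m = g_1^{-1}$, then $g = g_1 h g_1^{-1}$ where $h = g_2 \cdots g_{m-1}$ is a nonempty reduced word (here $m \geq 3$, since $m = 2$ with $g_2 = g_1^{-1}$ would force $g_1, g_2$ in the same factor, which is allowed, but then $h$ is empty and $g = g_1 g_1^{-1} = 1$, excluded). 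Then $g^\nu = g_1 h^\nu g_1^{-1}$, and since $h$ starts and ends in factors different from $G_l$ and $h^\nu$ has the same normal form analysis, one checks $h^\nu \neq 1$ and the conjugation by $g_1$ adds length, so $g^\nu$ has normal-form length $\geq 3$, a contradiction. The case $\nu < 0$ reduces to $\nu > 0$ by replacing $g$ with $g^{-1}$, whose normal form is $g_m^{-1} \cdots g_1^{-1}$ with the same length.

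The main technical point — and the only place needing a little care — is the conjugation case $g = g_1 h g_1^{-1}$: one must verify that $h^\nu$ is nonempty and that it is not a single letter lying in $G_l$ (the factor containing $g_1$), so that after conjugating back the result genuinely has length $\geq 3$ rather than collapsing. This follows by induction on $m$: $h = g_2 \cdots g_{m-1}$ is a strictly shorter reduced word, so by the inductive hypothesis (applied to $h$, noting $h \neq 1$) if $h^\nu$ were a single letter then $h$ would be a single letter in some factor $G_{l'}$; but $g_2 \notin G_l$ (reducedness of $g$ at the junction $g_1 g_2$) and $g_{m-1} \notin G_l$ (junction $g_{m-1}g_m = g_{m-1}g_1^{-1}$), so $l' \neq l$, and then $g = g_1 h g_1^{-1}$ with $h \in G_{l'}\setminus\{1\}$ has normal form of length exactly $3$, so $g^\nu = g_1 h^\nu g_1^{-1}$ has length $3$ as well (as $h^\nu \in G_{l'}\setminus\{1\}$ when $h^\nu \neq 1$, and length $1$ when $h^\nu = 1$ — but in a free product the factors $G_{l'}$ could have torsion, so $h^\nu = 1$ is possible; in that degenerate case $g^\nu = 1$, a single letter, so there is genuinely nothing to prove as the hypothesis $g^\nu$ single letter is vacuously consistent — wait, this needs $g$ itself to be a single letter, which it is not).

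I should therefore handle the torsion subtlety explicitly: the statement can fail if $g^\nu = 1$ is allowed to count as a single letter while $g \neq 1$ is not a single letter. For instance in $\Z/2 \freeprod \Z/2 = \langle a\rangle \freeprod \langle b\rangle$, the element $g = ab$ has $g^\nu$ never equal to $1$ and never a single letter, so no contradiction there; but $g = a b a^{-1} = ab a$ (in $\Z/2\freeprod\Z/2$, $a^{-1}=a$) has $g^2 = (aba)(aba) = ab \cdot aa \cdot ba = ab \cdot ba = a \cdot bb \cdot a = aa = 1$, which is a single letter, yet $g = aba$ is not a single letter! So \textbf{the lemma as literally stated is false} unless one excludes $\nu$ with $g^\nu = 1$, or interprets "single letter" to require the conclusion only when $g^\nu \neq 1$. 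Re-reading the statement: it says "if $g^\nu$ is a single letter for some $\nu\neq 0$, then $g$ itself is a single letter" — presumably the intended reading, consistent with its use in the paper, is that $g^\nu$ is a single letter lying in a factor in a way that is witnessed at a deep enough level, or more likely the groups $G_k$ in the application are \emph{torsion-free} (here $G = \pi_1(X)$, and in the Poincaré sphere case $\pi_1(S)$ is the binary icosahedral group, which \emph{does} have torsion — so this is a real gap, or I am missing a hypothesis). Assuming the intended statement restricts to $g^\nu$ being a \emph{non-trivial} single letter (or equivalently assuming the ambient factors are torsion-free, or that the power $\nu$ is taken large), the length computations above go through and give the result by the induction on $m$ just described; the conjugation case is the crux and the torsion phenomenon is the subtle point to flag. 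I would prove it under the hypothesis that $g^{\nu}\neq 1$, which is what is actually needed downstream, and remark on the torsion caveat.
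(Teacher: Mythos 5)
Your argument is essentially the paper's: both proofs strip the conjugating word off the two ends of the normal form to reach a cyclically reduced core (the paper extracts the maximal prefix $w$ with $g=wg'w^{-1}$ in one step; you peel one letter at a time and induct on the length), and both then observe that powers of a core of length $\geq 2$ cannot collapse to length $\leq 1$. Your case analysis in the non-collapsing cases (first and last letters in different factors, or in the same factor with $g_mg_1\neq 1$) is correct, as is the bookkeeping of first and last letters needed to rule out cancellation against the conjugating word.

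The substantive point is your torsion counterexample, and it is valid: with the paper's definition of ``single letter'' (normal form of length $\leq 1$, so the identity qualifies), the element $g=aba$ in $\Z/2\freeprod\Z/2$ satisfies $g^{2}=1$, a single letter, while $g$ has length $3$. The paper's own proof has exactly the gap you identify: the assertion that the normal form of $g^{\nu}$ is $w\,{g'}^{\nu}\,w^{-1}$ fails when $g'$ is a single torsion letter with ${g'}^{\nu}=1$, in which case $g^{\nu}=1$ even though $w\neq 1$. This is not a vacuous concern in context, since the factors in the intended application are copies of $\pi_{1}(S)$, the binary icosahedral group, which is finite. Your repair --- requiring $g^{\nu}\neq 1$, i.e.\ reading ``non-trivial single letter'' in the hypothesis --- is the right minimal fix, and your proof of that version is correct. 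It does force a second look at the one place the lemma is invoked (Lemma \ref{lem:top_is_single}): in the branch where $k_{0}<h_{0}$ one has $[\zip{\infty}{h_{0}}(g)]^{\nu}=1$, so the repaired lemma does not apply there as written and that step needs a separate argument.
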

\begin{proof}
  Let $g= g_{1}\dots g_{N}$ be in the normal form.

  Let $k$ be the maximal index such that $(g_{1}\dots g_{k}) =
  (g_{N-k+1}\dots g_{N})^{-1}$. Then letting $w=g_{1}\dots g_{k}$ and
  $g'=g_{k+1}\dots g_{N-k}$ (notice that $g'\neq 1$), we have $g = w g'
  w^{-1}$ and the normal form of $g^{\nu}$ is
  $$
  g^{\nu}=w\, {g'}^{\nu}\, w^{-1}.
  $$

  For $g^{\nu}$ to be a single letter hence requires that $w=1$ and $g'$
  to be a single letter, which means that $g$ itself is a single letter.
\end{proof}
\begin{remark}\label{rk:samefirstlastletters}
  Notice for future use that the proof also shows that if $g$ is not a
  single letter, the first and last letters of the normal form of any non
  trivial power of $g$ are the same as that of $g$ or $g^{-1}$.
\end{remark}

\begin{lemma}\label{lem:top_is_single}
  Suppose $g\in\Pi=\freeprod_{k\in\Z}G_{k}$ is such that the sub-group
  generated by $g$ up to DTC contains a non trivial single letter $a\in
  G_{k_{0}}\setminus\{1\}$.  Let $h_{0}$ be the first level above which
  $\zip{\infty}{h_{0}}(g)\neq 1$. Then the normal form of
  $\zip{\infty}{h_{0}}(g)$ is a single letter.
\end{lemma}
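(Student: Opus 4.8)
The plan is to argue inside the explicit model $\Pi=\varprojlim_{h}\Pi_{h}$, $\Pi_{h}=\freeprod_{k\geq h}G_{k}$, in which the generator $\tau$ of $\Deck\cong\Z$ sends $G_{k}$ to $G_{k+1}$ and $\zip{h}{h'}$ kills the factors $G_{k}$ for $h\leq k<h'$. Since the subgroup generated by $g$ up to DTC is $\Span{\{g\}}=\varprojlim_{h}\zipped{\PreSpan{\{g\}}}{h}$, the hypothesis means that at every level $h$ the truncation $\zip{\infty}{h}(g')$, for $g'=a$, is a \emph{finite} product of powers of the elements $\zip{\infty}{h}(\tau^{j}g)$, $j\in\Z$. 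Because $\Span{\{g\}}$ is $\Deck$-invariant we may replace $a$ by $\tau^{h_{0}-k_{0}}(a)$ and so assume $k_{0}=h_{0}$, i.e. $a\in G_{h_{0}}\setminus\{1\}$. Write $\bar g=\zip{\infty}{h_{0}}(g)\neq 1$; the maximality of $h_{0}$ gives $\zip{h_{0}}{h_{0}+1}(\bar g)=1$, so $\bar g$ lies in the normal closure of the top free factor $G_{h_{0}}$ of $\Pi_{h_{0}}$, and more generally $\tau^{j}g$ has top level $h_{0}+j$ with $\zip{\infty}{h_{0}+j}(\tau^{j}g)=\tau^{j}(\bar g)$. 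The goal is to prove that $\bar g$ is a single letter.

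The one computation I would isolate is a truncation identity: for a product $\prod_{i=1}^{r}(\tau^{j_{i}}g)^{\epsilon_{i}}$ with $J=\max_{i}j_{i}$, applying $\zip{\infty}{h_{0}+J}$ kills every factor with $j_{i}<J$ and turns every factor with $j_{i}=J$ into $\tau^{J}(\bar g)^{\pm1}$, so that the image of the product equals $\tau^{J}(\bar g)^{m}$ with $m=\sum_{\{i\,:\,j_{i}=J\}}\epsilon_{i}$. Now take a representation $a=\zip{\infty}{h_{0}}(a)=\prod_{i=1}^{r}(\tau^{j_{i}}g)^{\epsilon_{i}}$, discarding factors that already vanish at level $h_{0}$ (so $j_{i}\geq0$), and set $J=\max_{i}j_{i}\geq0$. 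If $J=0$, every $j_{i}=0$ and the identity gives $a=\bar g^{\,m}$ with $m\neq0$; Lemma~\ref{lem:power_is_singleletter}, applied in the free product $\Pi_{h_{0}}$, then says $\bar g$ is a single letter and we are done. If $J>0$, then $\zip{\infty}{h_{0}+J}(a)=1$ since $a\in G_{h_{0}}$ and $h_{0}<h_{0}+J$; hence $\tau^{J}(\bar g)^{m}=1$, and since by Remark~\ref{rk:samefirstlastletters} no non-trivial power of a non-single-letter of a free product is trivial, either $\tau^{J}(\bar g)$ — hence $\bar g$ — is a single letter, or else $m=0$.

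The remaining case $J>0$ with $m=0$ is the crux, and the main obstacle. Here I would prove, by downward induction on $J$ (the base case $J=0$ being the previous paragraph), that $a$ admits a representation with strictly smaller maximal shift. The extra input is that $\zip{\infty}{h}(a)=1$ holds for \emph{all} levels $h$ in the range $h_{0}<h\leq h_{0}+J$, not merely at $h=h_{0}+J$; feeding this into the normal forms of the free groups $\Pi_{h}$, together with Remark~\ref{rk:samefirstlastletters}, should pin down enough cancellation among the shift-$J$ factors — whose signed multiplicity vanishes — to rewrite the product with maximal shift $<J$ (one uses that a conjugate of a shift $\tau^{j}g$ by a word in the shifts is again a product of shifts of $g$ and its inverse). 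Carrying this rewriting out carefully — checking that it genuinely lowers $J$ and terminates — is the delicate point, and it is exactly where one must use that $a$ is a bona fide single letter of $G_{h_{0}}$ rather than merely an element of the normal closure of $G_{h_{0}}$. Once $J=0$ is reached, Lemma~\ref{lem:power_is_singleletter} applies as above and in fact exhibits $\bar g=\zip{\infty}{h_{0}}(g)$ as a single letter lying in the top factor $G_{h_{0}}$.
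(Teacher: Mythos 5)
Your first two paragraphs reproduce, up to a change of normalization, the whole of the paper's own proof: the paper normalizes the representing product so that $\max d_k=0$ and then truncates at level $h_0$, whereas you normalize $a$ to sit in $G_{h_0}$ and truncate at level $h_0+J$; in both cases one lands on the identity $\tau^{J}(\bar g)^{m}=\zip{\infty}{h_0+J}(a)$ and invokes Lemma~\ref{lem:power_is_singleletter}. The case you single out as the crux ($J>0$ with $m=0$) is exactly the case the paper's argument passes over (there it is $k_0<h_0$ with $\nu=\sum_{d_k=0}\nu_k=0$), so you have correctly located the delicate point --- but you have not proved it. Your third paragraph is a plan, not an argument: the claim that the vanishing of the signed multiplicity of the top-shift factors lets one rewrite the product with strictly smaller maximal shift is precisely what needs to be established, and it does not follow formally from $m=0$; it requires tracking how the letters of $\tau^{J}(\bar g)$ at level $h_0+J$ cancel against each other inside the interleaved product, i.e.\ a normal-form analysis of the kind the paper only carries out later, in the proof of Proposition~\ref{prop:noncyclic} (the decomposition $\omega=A_{-}\omega'A_{+}$ and the conditions \eqref{eq:clean_b0}--\eqref{eq:clean_bNb0}). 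As submitted, the proposal is therefore incomplete at the step you yourself flag.

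There is also a genuine error in the part you do carry out. From $\tau^{J}(\bar g)^{m}=1$ you deduce ``$\bar g$ is a single letter or $m=0$'' by asserting that no non-trivial power of a non-single-letter in a free product is trivial. That assertion fails when the factors have torsion: in $\Z/2\ast\Z/2=\langle x\rangle\ast\langle y\rangle$ one has $(xyx)^{2}=1$ although $xyx$ is not a single letter, and more generally any $wcw^{-1}$ with $w\neq 1$ and $c$ of finite order has trivial powers without being a single letter. Remark~\ref{rk:samefirstlastletters} and Lemma~\ref{lem:power_is_singleletter} are only safe when the power in question is itself a \emph{non-trivial} single letter, which is the situation in your $J=0$ case (where $\bar g^{m}=a\neq 1$) but not in the $J>0$ case (where the power is $1$). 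Since the paper's main example has $G=\pi_1(S)$ finite, so that every letter is torsion, this is not a removable technicality: your dichotomy leaves open the possibility $\bar g=wcw^{-1}$ with $w\neq1$ and $c^{m}=1$, $m\neq 0$, which would have to be excluded by a separate argument before the reduction to $m=0$ can even begin.
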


\begin{proof}
  By assumption, there are shifts of powers of $g$ such that above the
  level $k_{0}$ of this single letter $a$, we have~:
  $$
  a = \zip{\infty}{k_{0}}\left(\prod_{k=1}^{N} s^{d_{k}}(g^{\nu_{k}})\right),
  $$
  where $s$ denotes the upward shift. Since everything is defined up to
  shift, we can suppose that $d=\max\{d_{k},1\leq k\leq N\}=0$. Then
  $k_{0}\leq h_{0}$, so that
  \begin{equation}\label{eq:ziph0}%
    \zip{\infty}{h_{0}}\left(\prod_{k=1}^{N}
      s^{d_{k}}(g^{\nu_{k}})\right) = \zip{\infty}{h_{0}}(a)=
    \begin{cases}
      a &\text{ if } k_{0}=h_{0},\\
      1 &\text{ if } k_{0}<h_{0}.\\
    \end{cases}
  \end{equation}
  In both cases, $\zip{\infty}{h_{0}}(a)$ is a single,
  possibly trivial, letter $b$.

  But for all $d<0$, we have $\zip{\infty}{h_{0}}(s^{d}g)=1$, so that
  \eqref{eq:ziph0} reduces to
  \begin{equation*}%\label{eq:ziph0reduced}%
    \zip{\infty}{h_{0}} (g^{\nu}) = b = [\zip{\infty}{h_{0}} (g)]^{\nu},
  \end{equation*}
  where $\nu=\sum_{k/d_{k}=0}\nu_{k}$. From lemma
  \ref{lem:power_is_singleletter} it follows that
  $\zip{\infty}{h_{0}}(g)$ is a single letter, which is non trivial by the
  definition of $h_{0}$.
\end{proof}

\begin{proof}[Proof of proposition \ref{prop:noncyclic}]
  Suppose we can find a single generator $g$ for $\Pi$ up to DTC.

  Let $h_{0}$ be the first level above which
  $\zip{\infty}{h_{0}}(g)\neq 1$. From lemma \ref{lem:top_is_single} it follows that
  $$
  \zip{\infty}{h_{0}}(g)=s^{h_{0}}(a)
  $$
  is a single letter $a$ in level $h_{0}$. Here $G$ is identified with
  $G_{0}$.

  Let then $h_{1}$ be the first level above which the normal form of $g$
  contains a letter that is not a power of $a$~:
  \begin{gather*}
    %\label{eq:h1}%
    \forall k\in \{h_{1}+1,\dots,h_{0}\},\
    \zip{\infty}{k}(g)\in \freeprod_{h_{1}<h} \big(s^{h}(a)\big)^{\Z},\\
      \zip{\infty}{h_{1}}(g)\notin \freeprod_{h_{1}\leq h} \big(s^{h}(a)\big)^{\Z}.
  \end{gather*}

  Let then
  \begin{equation*}
    %\label{eq:notation_a_omega}%
    \omega = \zip{\infty}{h_{1}}(g)\in \freeprod_{k\geq h_{1}} G_{k}.
  \end{equation*}
  Observe that $\omega$ is not a single letter, and hence none of its non
  trivial powers either. The normal form of $\omega$
  \begin{equation*}
    %\label{eq:zip0g}%
    \omega = s^{k_{0}}(b_{0})s^{k_{1}}(b_{1})\dots s^{k_{N}}(b_{N})
  \end{equation*}
  is a sequence of letters $b_{i}\in G\setminus\{1\}$ in levels $k_{i}$
  such that $k_{i}\geq h_{1}$ and $|i-j|=1\Rightarrow k_{i}\neq k_{j}$.

  Let $A_{-}$ and $A_{+}$ be the longest possible words at the beginning
  and at the end of $\omega$ whose letters are all powers of $a$ in a
  level $<h_{0}$. More precisely, let~:
  $$
  A_{-}=\prod_{0\leq i< i_{-}}s^{k_{i}}(b_{i}),
  \quad
  \omega'=\prod_{i_{-}\leq i\leq i_{+}}s^{k_{i}}(b_{i}),
  \quad
  A_{+}=\prod_{i_{+}< i\leq N}s^{k_{i}}(b_{i}),
  $$
  where $i_{-}=\inf\{i, b_{i}\notin a^{\Z} \text{ or } k_{i}=h_{0}\}$ and
  $i_{+}=\sup\{i, b_{i}\notin a^{\Z} \text{ or } k_{i}=h_{0}\}$. Then
  \begin{equation}
    \label{eq:A-omegaA+} \omega = A_{-}\omega' A_{+}
  \end{equation}
  and the first and last letters of $\omega'$,
  $s^{k_{i_{-}}}(b_{i_{_{-}}})$ and $s^{k_{i_{+}}}(b_{i_{_{+}}})$ and are
  such that~:
  \begin{gather}
    \label{eq:clean_b0}%
    k_{i_{-}}\geq h_{0} \text{ or } \forall\alpha\in\Z, a^{\alpha}\neq
    1\Rightarrow a^{\alpha}b_{i_{-}}\neq 1,\\
    \label{eq:clean_bN}%
    k_{i_{+}}\geq h_{0} \text{ or } \forall\alpha\in\Z, a^{\alpha}\neq
    1\Rightarrow b_{i_{+}}a^{\alpha}\neq 1.
    \intertext{  Moreover, if $k_{i_{-}}= k_{i_{+}}$ and
      $\exists\alpha\in\Z, b_{i_{+}}a^{\alpha}b_{i_{-}}=1$, we replace
      $b_{i_{-}}$ by $a^{\alpha}b_{i_{-}}$ and $A_{-}$ by $A_{-}s^{k_{i_{-}}}(a^{-\alpha})$,
      so that  $b_{i_{+}}b_{i_{-}}=1$, and hence}%
    k_{i_{-}}\neq k_{i_{+}} \text{ or }
    \label{eq:clean_bNb0}%
    \forall\alpha\in\Z, a^{\alpha}\neq 1\Rightarrow
    b_{i_{+}}a^{\alpha}b_{i_{-}}\neq 1.
  \end{gather}
  Finally, recall for future use that from remark
  \ref{rk:samefirstlastletters}, the first and last letters $b_{i_{-}}$
  and $b_{i_{+}}$ of $\omega'$ are also the first and last letters of any non
  trivial power of $\omega'$.

  \medskip

  Since $G$ is not cyclic, there is an element
  \begin{equation}
    \label{eq:newelem}
    c\in G\setminus\{a^{\Z}\}
  \end{equation}
  which we regard as a single letter in $G_{0}\subset \Pi$. Since $g$
  generates $\Pi$ up to DTC, there is a word
  $x=\prod_{i}s^{d_{i}}(g^{\nu_{i}})$ such that
  $$
  \zip{\infty}{0}(x)=c\in G_{0}.
  $$
  Observe that $d = \max(d_{i})$ satisfies $d+h_{1}\geq 0$, since
  otherwise all the letters in $\zip{\infty}{0}(x)$ would belong to
  $a^{\Z}$, which contradicts \eqref{eq:newelem}. To reduce notations,
  and since everything is defined up to shift, we shift everything by
  $-d$, so that we can consider that $d=0$ and $c$ belongs to the level
  $-d$, which is not higher than $h_{1}$.

  Then the normal form of $\zip{\infty}{h_{1}}(x)$ is either $1$ (if
  $-d<h_{1}$) or the single letter $c$ (if $-d=h_{1}$)~:
  \begin{equation*}
    %\label{eq:reduced_x}%
    \zip{\infty}{h_{1}}(x)=1 \quad\text{ or }\quad
    \zip{\infty}{h_{1}}(x)=c\in G_{h_{1}}
  \end{equation*}

  On the other hand, $\zip{\infty}{h_{1}}(x)$ expands as
  $$
  \zip{\infty}{h_{1}}(x)=\prod_{i}\zip{\infty}{h_{1}}(s^{d_{i}}(g))^{\nu_{i}}
  $$
  and when $d_{i}<0$, the normal form of
  $\zip{\infty}{h_{1}}(s^{d_{i}}(g))$ is a word whose letters are all
  powers of $a$ in levels in $\{h_{1},\dots,h_{0}-1\}$, while for
  $d_{i}=0$, we have $\zip{\infty}{h_{1}}(s^{d_{i}}(g))=\omega$. As a
  consequence, we can write $\zip{\infty}{h_{1}}(x)$ as
  \begin{equation*}
    %\label{eq:nonreduced_x}%
    \zip{\infty}{h_{1}}(x) =
    A_{0}\,\omega^{\beta_{1}}A_{1}\dots
    \omega^{\beta_{N'}} A_{N'},
  \end{equation*}
  where $\omega^{\beta_{i}}\neq 1$ for $1\leq i \leq N'$ and $N'$ is
  indeed at least $1$ because of assumption \eqref{eq:newelem}.
  \begin{equation}
    \label{eq:A}%
    A_{i}\in\freeprod_{h_{1}\leq k<h_{0}}\Big(s^{k}(a)\Big)^{\Z}.
  \end{equation}
  Using \eqref{eq:A-omegaA+} and merging the $A_{i}$ and the possible
  $A_{-}$ or $A_{+}$, we can rewrite this as
  \begin{equation}
    \label{eq:nonreduced_xbis}%
    \zip{\infty}{h_{1}}(x) =
    A'_{0}\,\omega'^{\beta'_{1}}A'_{1}\dots
    \omega'^{\beta'_{N''}} A'_{N''},
  \end{equation}
  where the $A'_{i}$ also satisfy \eqref{eq:A} and
  $$
  A'_{i}\neq 1 \text{ for } 0< i<N''.
  $$

  Observe now that from \eqref{eq:clean_b0}, \eqref{eq:clean_bN}, and
  \eqref{eq:clean_bNb0}, the normal form of $\zip{\infty}{h_{1}}(x)$
  is obtained from the concatenation of the normal form of all the
  $A'_{i}$ and $\omega'^{\beta'_{j}}$ given by \eqref{eq:nonreduced_xbis} only
  by merging the first and/or last letter of $A'_{i}$ with the last and/or
  first letter of its neighbors if ever possible.

  In particular, the collection of levels supporting non trivial letters
  in the normal form of $\zip{\infty}{h_{1}}(x)$ is the union of the
  levels supporting non trivial letters in the normal form of all
  the $A'_{i}$ and $\omega'^{\beta'_{i}}$.

  Since none of the $\omega'^{\beta'_{i}}$ are single letters, this
  collection contains at least two levels and $\zip{\infty}{h_{1}}(x)$
  cannot be a single letter, which is a contradiction.
\end{proof}

\subsubsection{An upper bound for the deficiency}

Estimating $\mu_{DTC}$ and $\rho_{DTC}$ is hard in general, but the
object of this section is to give an upper bound for the deficiency
of $\Pi$ up to DTC, i.e. of the maximum of the difference $m-r$ of
the number of generators $m$ and relations $r$ in a finite
presentation up to
 DTC.

For the group $\pi_{1}(X)$ itself, the deficiency is bounded above
by the dimension of the real vector space
$$
L=\Hom(\pi_{1}(X),\R)
$$
of group morphisms from $\pi_{1}(X)$ to $(\R,+)$. The naive Novikov
counterpart of this space would be the space
$$
L_{\Lambda_{\R}} = \Hom^{\Deck}(\pi_{1}(\Cov{M},u),\Lambda_{\R})
$$
of shift equivariant morphisms from $\pi_{1}(\Cov{M},u)$ to the
field
$$
\Lambda_{\R}=\R((t))
$$
of Laurent formal series, seen as an additive group. However, we
need to keep track of the filtration and we restrict attention to
morphisms that are compatible with the inverse systems in the
following way.

First observe that $\Lambda_{\R}$ is in fact the inverse limit of
the inverse system
$$
\cdots \xrightarrow{[\cdot]_{d+1}}
\R_{d+1}((t))\xrightarrow{[\cdot]_{d}}
\R_{d}((t))\xrightarrow{[\cdot]_{d-1}}\cdots
$$
where $\R_{d}((t))$ denotes the Laurent polynomials of degree at
most $d$, and the transition maps are given by the truncation of
high degree terms. For convenience, we will write $\Lambda_{d} =
\R_{d}((t))$. Then~:
$$
\Lambda_{\R} = \varprojlim_{d\to +\infty}\Lambda_{d}.
$$

Let then
$$
L_{\Lambda_{\R}} = \ProHom^{\Deck}(\Pi, \Lambda)
$$
be the space of shift-equivariant morphisms from $\Pi$ to
$\Lambda_{\R}$ that are limits of morphisms from the inverse system
$(\Pi_{h})_{h\in\Z}$ to the inverse system $(\Lambda_{d})_{d\in\Z}$,
i.e. the space of morphisms $\phi:\Pi\to\Lambda_{\R}$ such that
there is a constant $d_{0}\in\Z$ and a collection
$(\phi_{h})_{h\in\Z}$ with
$\phi_{h}\in\Hom(\Pi_{h},\Lambda_{d_{0}-h})$, such that
\begin{enumerate}
\item
  the following diagram
  $$
  \xymatrix{
    \cdots\ar[r]&
    \Pi_{h-1}\ar^{\zip{h-1}{h}}[r]\ar^{\phi_{h-1}}[d]&
    \Pi_{h}\ar[r]\ar^{\phi_{h}}[d]&\cdots
    \\
    \cdots\ar[r]& \Lambda_{d_{0}-h+1}\ar^{[\cdot]_{d_{0}-h}}[r]& \Lambda_{d_{0}-h}\ar[r]&\cdots
  }
  $$
  is commutative,
\item
  each $\phi_{h}$ is shift equivariant~:
  $$
  \forall g\in\Pi_{h},\
  \phi_{h}(s(g)) = t^{-1}\phi_{h-1}(g),
  $$
\item
  and
$$
\phi = \varprojlim_{h}\phi_{h}.
$$
\end{enumerate}
Here, $\Pi_{h}=\freeprod_{k\geq h}G_{k}$ and $s$ denotes the map
$\Pi_{h-1}\to\Pi_{h}$ induced by the positive shift.

Given an element $\phi$ which is the limit of a morphism
$$(\phi_{h}:\Pi_{h}\to\Lambda_{d_{0}-h})_{h\in\Z}\in
L_{\Lambda_{\R}},$$ and $\lambda\in\Lambda_{\R}$, the morphisms
$$
\psi_{h}:
\begin{array}{ccc}
  \Pi_{h}&\to &\Lambda_{d_{0}-\nu-h}\\
        g &\mapsto& [\lambda\phi_{h}(g)]_{d_{0}-\nu-h}
\end{array}
$$
where $\nu$ is the $t$ valuation of $\lambda$, define a new element
$\psi = \lambda\phi$ in $L_{\Lambda_{\R}}$ that only depends on
$\phi$ and not on the choice of $d_{0}$ and $(\phi_{h})_{h}$.
Endowed with this operation, it is not hard to check that~:
\begin{proposition}
  The space $L_{\Lambda_{\R}}$ is a $\Lambda_{\R}$-vector space.
\end{proposition}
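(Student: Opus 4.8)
The plan is to verify, level by level and then in the inverse limit, the axioms of a $\Lambda_{\R}$-vector space for $L_{\Lambda_{\R}}$ equipped with pointwise addition and the scalar multiplication introduced above. Since $\Lambda_{\R}$ is a field, it is enough to produce an abelian group structure together with an action of $\Lambda_{\R}$ satisfying $1\cdot\phi=\phi$, $(\lambda\mu)\phi=\lambda(\mu\phi)$, $(\lambda+\mu)\phi=\lambda\phi+\mu\phi$ and $\lambda(\phi+\psi)=\lambda\phi+\lambda\psi$. Each of these identities holds pointwise in the ring $\Lambda_{\R}$, so once the two operations are shown to be well defined, i.e.\ to produce morphisms that again lie in $L_{\Lambda_{\R}}$, all the axioms follow by evaluating on an arbitrary $g\in\Pi$.

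The first step is a shrinking lemma for the representing data. If $\phi=\varprojlim_{h}\phi_{h}$ with $\phi_{h}\in\Hom(\Pi_{h},\Lambda_{d_{0}-h})$, then post-composing each $\phi_{h}$ with the transition map $\Lambda_{d_{0}-h}\to\Lambda_{d_{0}-1-h}$ produces a system $(\phi'_{h})_{h}$ with the smaller constant $d_{0}-1$, still satisfying the commutativity and shift-equivariance conditions, and with $\varprojlim_{h}\phi'_{h}=\phi$ (truncating at level $d_{0}-1-h$ and letting $h\to-\infty$ recovers every coefficient of $\phi(g)$). Iterating, any $D\le d_{0}$ may be used. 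Consequently, given $\phi,\psi\in L_{\Lambda_{\R}}$ represented at constants $d_{0}$ and $d_{0}'$, one may bring both to the common constant $D=\min(d_{0},d_{0}')$; then $\phi_{h}+\psi_{h}\in\Hom(\Pi_{h},\Lambda_{D-h})$ makes sense (the target is abelian), the squares still commute because the transition maps $[\,\cdot\,]$ are additive, shift-equivariance is additive, and $\phi+\psi:=\varprojlim_{h}(\phi_{h}+\psi_{h})$ is the pointwise sum and lies in $L_{\Lambda_{\R}}$. The zero morphism satisfies all three conditions with any constant and negatives are produced in the same way, so $(L_{\Lambda_{\R}},+)$ is an abelian group.

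Next I would check that $\lambda\phi$ really is an element of $L_{\Lambda_{\R}}$. Each $\psi_{h}\colon g\mapsto[\lambda\phi_{h}(g)]_{d_{0}-\nu-h}$ is a group homomorphism $\Pi_{h}\to\Lambda_{d_{0}-\nu-h}$, being $\phi_{h}$ followed by the additive map ``multiply by $\lambda$ and truncate''. The commutativity of the defining square for $(\psi_{h})_{h}$ is deduced from that for $(\phi_{h})_{h}$ together with the compatibility of multiplication by $\lambda$ with the truncations, the shift of the truncation index by the valuation $\nu$ being exactly what makes this work; shift-equivariance follows by combining $\phi_{h}(s(g))=t^{-1}\phi_{h-1}(g)$ with the commutation of multiplication by $t^{-1}$ with the truncation maps. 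Hence $\psi=\varprojlim_{h}\psi_{h}\in L_{\Lambda_{\R}}$, and its value on $g$ is the honest product $\lambda\cdot\phi(g)$ computed in $\Lambda_{\R}$; this makes $\lambda\phi$ manifestly independent of the representing data, as remarked in the construction, and makes the module axioms immediate (after, when necessary, bringing the finitely many representing systems involved to a common constant via the shrinking lemma so the levelwise identities make sense). Since $\Lambda_{\R}$ is a field, this yields the claimed vector space structure.

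The only point requiring genuine care — though, as already indicated, it is still routine — is the compatibility between multiplication by $\lambda$ and the inverse systems $(\Pi_{h})_{h}$ and $(\Lambda_{d})_{d}$: one must track the degree shift induced by the valuation $\nu$ and confirm that $d_{0}-\nu-h$ is precisely the truncation index for which the square in the first defining condition commutes. Everything else reduces to the operations being pointwise in the field $\Lambda_{\R}$, combined with the shrinking lemma allowing any two elements of $L_{\Lambda_{\R}}$ to be represented over a common constant.
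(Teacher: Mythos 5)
The paper offers no proof of this proposition (it is prefaced by ``it is not hard to check that''), and your write-up supplies precisely the routine verification that is intended: reduction of two representing systems to a common constant $d_{0}$ via the shrinking lemma, levelwise additivity of the truncation maps, and the degree bookkeeping for scalar multiplication, after which every vector-space axiom holds pointwise in the field $\Lambda_{\R}$. This is correct and consistent with the paper's approach; the one caveat is that the truncation index for which the scalar-multiplication squares genuinely commute is $d_{0}+\nu-h$ (knowing $x$ modulo degrees $>d$ determines $\lambda x$ only modulo degrees $>d+\nu$), so the paper's $d_{0}-\nu-h$ is the right bound only when $\nu\geq 0$ --- a sign quibble inherited from the source rather than a flaw in your argument, and one that does not affect the conclusion.
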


\bigskip

Recall that a finite presentation
$<g_{1},\dots,g_{m}|w_{1},\dots,w_{r}>$ of $G=\pi_{1}(X)$, allows to
identify $L$ as the kernel of a linear map
\begin{equation}
  \label{eq:MorphismsAsLinearSystem}%
  0\to L\to \R^{m}\to \R^{r} \to 0
\end{equation}
defined by the abelian rewriting of the words $w_{1},\dots,w_{r}$.

To establish a similar statement for $L_{\Lambda_{\R}}$, some
restriction on the family of generators is required. Observe that
the kernels $K_{h}$ of the evaluation maps
$\F_{h}\xrightarrow{\eval_{h}}\Pi_{h}$ defined in \eqref{eq:eval_h}
form an inverse system.

\begin{definition}
  A presentation up to DTC of $\pi_{1}(\Cov{M},u)$ is said to have well
  behaved relations if the inverse system $(K_{h})$ satisfies the Mittag-Leffler condition.
\end{definition}

This condition is not restrictive for the Morse interpretation of
the Novikov fundamental group since it always gives well-behaved
relations~:
\begin{proposition}
  The presentation of $\pi_{1}(\Cov{M},u)$ up to DTC associated to the
  index $1$ and $2$ critical points of a Morse $1$-form $\alpha$ in the
  class $u$ always has well behaved relations.
\end{proposition}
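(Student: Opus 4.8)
The plan is to prove the stronger statement that every transition map of the inverse system $(K_{h})$ is surjective, which immediately yields the Mittag--Leffler condition (with constant $0$). Recall that $\F_{h}=\zipped{\PreFree{\Steps}}{h}$ is the free group on the index $1$ critical points of $f_{\alpha}$ lying above level $h$, that $K_{h}=\ker\eval_{h}$ with $\eval_{h}\colon\F_{h}\to\zipped{\pi_{1}(f_{\alpha})}{h+K}$ the map of \eqref{eq:eval_h}, and that for $h<h_{0}$ the transition morphism $\zip{h}{h_{0}}\colon\F_{h}\to\F_{h_{0}}$ is the obvious surjection killing the generators of height in $(h,h_{0}]$. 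Since the $\eval_{h}$ form a morphism of inverse systems, $\zip{h}{h_{0}}$ carries $K_{h}$ into $K_{h_{0}}$, and the claim is that this restriction is onto for every $h<h_{0}$.

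First I would establish the fixed--level analogue of Proposition \ref{prop:Crit2SpansRelations}: at each regular level $h_{0}$, the normal subgroup $K_{h_{0}}\subset\F_{h_{0}}$ is the \emph{ordinary} normal closure of the family $\{w_{z,h_{0}}\ :\ z\in\Crit_{2}(f_{\alpha})\}$ of Morse--Novikov relation words, where $w_{z,h_{0}}=1$ whenever $f_{\alpha}(z)\le h_{0}$. This is exactly the disc--pushing argument in the proof of Proposition \ref{prop:Crit2SpansRelations}, run at a single regular level: if $w\in\F_{h_{0}}$ evaluates to $1$, its evaluation bounds a disc in $\zipped{\Cov{M}}{h_{0}}$ (regularity ensures $\sublevel{\Cov{M}}{h_{0}}$ is a neighborhood deformation retract), and pushing that disc down by the gradient flow writes $w$ as a \emph{finite} product of conjugates of the $w_{z,h_{0}}$; no completion enters since no projective limit is taken, and regular levels are cofinal so it suffices to treat them.

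Next I would record two compatibilities. By construction the relation words are coherent, $\zip{h}{h_{0}}(w_{z,h})=w_{z,h_{0}}$, and $w_{z,h}\in K_{h}$ as soon as $h<f_{\alpha}(z)$: indeed $\eval_{h}(w_{z,h})$ is the class in $\zipped{\pi_{1}(f_{\alpha})}{h+K}$ of the loop $\rho_{z,h}$ obtained by flowing a small circle of $W^{u}(z)$ down to level $h$, and $\rho_{z,h}$ bounds the corresponding flowed piece of $W^{u}(z)$ inside $\zipped{\Cov{M}}{h}$, hence is null--homotopic there and a fortiori after projecting to level $h+K$ (one takes the constant $K$ of $\eval_{h}$ to be at least $\kappa$, so that the low letters deleted when forming $w_{z,h}$, those of height $\le h$, also die at level $h+K$ by \eqref{eq:gammaxHeightBound}). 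Surjectivity of $\zip{h}{h_{0}}\colon K_{h}\to K_{h_{0}}$ then follows by lifting conjugators: given $k=\prod_{j}a_{j}\,w_{z_{j},h_{0}}^{\pm1}\,a_{j}^{-1}\in K_{h_{0}}$ with each $f_{\alpha}(z_{j})>h_{0}$ (trivial factors discarded), pick preimages $\tilde a_{j}\in\F_{h}$ of the $a_{j}$ under $\F_{h}\twoheadrightarrow\F_{h_{0}}$; since $h<h_{0}<f_{\alpha}(z_{j})$ we have $w_{z_{j},h}\in K_{h}$ and $\zip{h}{h_{0}}(w_{z_{j},h})=w_{z_{j},h_{0}}$, so $\tilde k=\prod_{j}\tilde a_{j}\,w_{z_{j},h}^{\pm1}\,\tilde a_{j}^{-1}\in K_{h}$ maps to $k$. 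Hence all transition maps are onto and $(K_{h})$ satisfies the Mittag--Leffler condition.

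The step requiring the most care is the fixed--level identification of $K_{h_{0}}$ in the second paragraph: one must check that the retraction/pushing--down procedure of Proposition \ref{prop:Crit2SpansRelations} genuinely produces an \emph{honest} normal closure and not merely one ``up to deck transformations and completion'' — which is why it is run level by level at regular values — and one must keep track of the index $0$ critical points (already absorbed into the Morse--Novikov steps) and of the harmlessly deleted low letters, which is the reason for enlarging the constant $K$ of the evaluation map to at least $\kappa$.
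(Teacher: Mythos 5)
Your proposal is correct and follows essentially the same route as the paper: the paper's proof is the one-line observation that the disc-pushing argument of Proposition \ref{prop:Crit2SpansRelations} shows the transition maps $K_{h}\to K_{h'}$ are surjective, and your argument is precisely a detailed working-out of that claim (fixed-level normal closure via pushing discs down, coherence of the words $w_{z,h}$, and lifting conjugators). The extra care you take with regular levels, the deleted low letters, and the constant $\kappa$ is a welcome elaboration of what the paper leaves implicit.
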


\begin{proof}
  The proof of statement \ref{prop:Crit2SpansRelations} shows that the
  maps $K_{h}\to K_{h'}$ are in fact surjective.
\end{proof}

We can now generalize $\eqref{eq:MorphismsAsLinearSystem}$ to the
Novikov
 situation.
\begin{lemma}
  Let $(g_{1},\dots,g_{m})$ and $(w_{1},\dots,w_{r})$ be a presentation
  up to DTC of $\Pi = \pi_{1}(\Cov{M},u)$ with well-behaved relations.
  Then there is a linear map $\Lambda_{\R}^{m}\to\Lambda_{\R}^{r}$ whose
  kernel is $L_{\Lambda_{\R}}$, i.e. there is a short exact sequence
  \begin{align}
  \label{eq:exactsequenceOLLambda} 0\to
    L_{\Lambda_{\R}}\xrightarrow{\epsilon}
    \Lambda_{\R}^{m}\xrightarrow{\rho} \Lambda_{\R}^{r}.
  \end{align}
  In particular, $\dim_{\Lambda_{\R}}L_{\Lambda_{\R}}\geq m-r$.
\end{lemma}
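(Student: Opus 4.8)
The strategy is to transpose to the Novikov setting the classical fact that a presentation $\langle g_{1},\dots,g_{m}\mid w_{1},\dots,w_{r}\rangle$ of a group $G$ identifies $\Hom(G,\R)$ with the kernel of the linear map $\R^{m}\to\R^{r}$ given by the abelianised relation matrix; the only new ingredients are the deck action and the filtrations. Set $A=\{g_{1},\dots,g_{m}\}$, $F=\Free{A}$ (the free group up to DTC) and $\eval\colon F\to\Pi$, so that $\Rel(A)=\ker\eval=\NormalSpan{\{w_{1},\dots,w_{r}\}}$. The first step is the \emph{Novikov abelianised rewriting}: writing $w_{j}$ as a (possibly infinite) word $\prod_{\ell}(\tau_{\ell},g_{i_{\ell}})^{\epsilon_{\ell}}$ in its letters, set
$$
a_{ji}=\sum_{\ell\,:\,i_{\ell}=i}\epsilon_{\ell}\,t^{-u(\tau_{\ell})},
$$
the signed generating series of the deck shifts applied to $g_{i}$ inside $w_{j}$, the exponent convention being the one forced by the equivariance $\phi(\tau\cdot x)=t^{-u(\tau)}\phi(x)$ built into the definition of $L_{\Lambda_{\R}}$. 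Because $w_{j}\in F$ has only finitely many letters of height above any given level, and the heights of $g_{1},\dots,g_{m}$ are bounded, the exponents $-u(\tau_{\ell})$ occurring in $a_{ji}$ tend to $+\infty$, so $a_{ji}$ is a well-defined element of $\Lambda_{\R}$ with only finitely many terms of negative degree; moreover $a_{ji}$ is independent of the word chosen for $w_{j}$, being the coefficient of the formal variable $X_{i}$ in the image of $w_{j}$ under the homomorphism from $F$ to the additive group of the free $\Lambda_{\R}$-module on $X_{1},\dots,X_{m}$ obtained by abelianising and sending $(\tau,g_{i})\mapsto t^{-u(\tau)}X_{i}$ (which converges by the same height estimate). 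Define $\rho\colon\Lambda_{\R}^{m}\to\Lambda_{\R}^{r}$ by $\rho((x_{i})_{i})=\bigl(\sum_{i}a_{ji}x_{i}\bigr)_{j}$ and $\epsilon\colon L_{\Lambda_{\R}}\to\Lambda_{\R}^{m}$ by $\epsilon(\phi)=(\phi(g_{1}),\dots,\phi(g_{m}))$; both are $\Lambda_{\R}$-linear.

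For exactness, first note that for $\phi\in L_{\Lambda_{\R}}$ the composite $\phi\circ\eval\colon F\to\Lambda_{\R}$ is a shift-equivariant morphism into an abelian group that is continuous for the inverse-limit topologies, hence agrees with the abelianised rewriting that sends $X_{i}\mapsto\phi(g_{i})$; evaluating at $w_{j}$ gives $\sum_{i}a_{ji}\phi(g_{i})=\phi(\eval(w_{j}))=\phi(1)=0$, so $\rho\circ\epsilon=0$. If in addition $\phi(g_{i})=0$ for all $i$, then $\phi$ vanishes on every deck translate $\tau\cdot g_{i}$, hence on $\PreSpan{A}$, hence — by continuity and $\Pi=\Span{A}$ — on all of $\Pi$; thus $\epsilon$ is injective. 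Conversely, given $(x_{i})_{i}\in\ker\rho$, let $\tilde\phi\colon F\to\Lambda_{\R}$ be the abelianised rewriting with $X_{i}\mapsto x_{i}$, i.e. $\tilde\phi\bigl(\prod_{\ell}(\tau_{\ell},g_{i_{\ell}})^{\epsilon_{\ell}}\bigr)=\sum_{\ell}\epsilon_{\ell}t^{-u(\tau_{\ell})}x_{i_{\ell}}$: this series converges, $\tilde\phi$ is a shift-equivariant group homomorphism, and it respects the filtrations (a word of height $\le h$ is sent to an element whose $t$-valuation is bounded below in terms of $h$). By hypothesis $\tilde\phi(w_{j})=\sum_{i}a_{ji}x_{i}=0$ for each $j$; and since $\Lambda_{\R}$ is abelian and $\tilde\phi$ is shift-equivariant and filtration-compatible, $\tilde\phi$ kills every conjugate and deck translate of every $w_{j}$ and every (possibly infinite) product of such, i.e. all of $\NormalSpan{\{w_{j}\}}=\Rel(A)=\ker\eval$. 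Hence $\tilde\phi$ descends to a morphism $\phi\colon\Pi\to\Lambda_{\R}$ which belongs to $L_{\Lambda_{\R}}$ and satisfies $\epsilon(\phi)=(x_{i})_{i}$. This gives the exact sequence $0\to L_{\Lambda_{\R}}\xrightarrow{\epsilon}\Lambda_{\R}^{m}\xrightarrow{\rho}\Lambda_{\R}^{r}$; since $\Lambda_{\R}$ is a field, $\dim_{\Lambda_{\R}}L_{\Lambda_{\R}}=m-\operatorname{rank}\rho\ge m-r$.

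The main obstacle is the converse direction, and specifically the point at which the hypothesis of \emph{well-behaved relations} is essential: one must not merely produce an abstract homomorphism $\Pi\to\Lambda_{\R}$, but exhibit it as an element of $L_{\Lambda_{\R}}$, i.e. as the limit of a morphism of inverse systems $(\phi_{h}\colon\Pi_{h}\to\Lambda_{d_{0}-h})_{h}$ with a \emph{uniform} degree offset $d_{0}$. The uniform $d_{0}$ comes from the bounded heights of $w_{1},\dots,w_{r}$; and checking that the level-$h$ rewriting actually descends to $\Pi_{h}=\F_{h}/K_{h}$, compatibly as $h$ varies, is exactly where one needs the Mittag-Leffler condition on the system $(K_{h})$ of kernels of the $\eval_{h}$, since this forces the level-wise relations to agree — up to a bounded shift in $h$ — with the deck translates and conjugates of the $w_{j}$ that $\tilde\phi$ visibly kills. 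The remaining verifications (convergence of the rewritings, equivariance, filtration-compatibility, $\Lambda_{\R}$-linearity of $\rho$ and $\epsilon$) are routine.
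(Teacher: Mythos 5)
Your proposal is correct and follows essentially the same route as the paper: $\epsilon$ by evaluation on the generators (injective because they generate up to DTC), $\rho$ by the abelianised rewriting of the $w_{j}$ with weights $t^{-k}$ for the deck shifts, and the converse inclusion $\ker\rho\subset\operatorname{im}\epsilon$ obtained by descending the rewriting morphism from the free group to the $\Pi_{h}$, with the well-behaved-relations (Mittag--Leffler) hypothesis invoked exactly where the paper invokes it, namely to identify $\ker(\eval_{h})$ with the image of the global relations up to a bounded level shift so that the level-wise morphisms descend with a uniform degree offset. The only difference is cosmetic: you package $\rho$ globally as a matrix $(a_{ji})$ over $\Lambda_{\R}$ rather than level by level, which is equivalent.
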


\begin{proof}
  The map $\epsilon$ is defined by evaluating a morphism $\phi\in
  L_{\Lambda_{\R}}$ on $(g_{1},\dots,g_{m})$~:
  $$
  \epsilon(\phi) = (\phi(g_{1}),\dots,\phi(g_{m})).
  $$
  To check it is injective, consider some $\phi\in \ker\epsilon$, and a
  sequence $$(\phi_{h}:\Pi_{h}\to\Lambda_{d_{0}-h})_{h\in\Z}$$ defining it.
  Since $\phi$ is supposed to be shift equivariant, we have then
  $\phi_h(\zip{\infty}{h}(s^{k}(g_i)))=0$ for all $i\in\{1,\dots,m\}$ and
  all $k\in\Z$. Since $(g_{1},\dots,g_{m})$ generates $\Pi$ up to DTC,
  this means that $\phi_{h}$ vanishes on the image of
  $\Pi\xrightarrow{\zip{\infty}{h}}\Pi_{h}$ and hence that $\phi=0$.

  \bigskip

  The definition of the map $\rho$ requires some extra care because of
  the completion process involved in the definition of
  $\Free[\Z]{\{g_{1},\dots,g_{m}\}}$. Recall that the deck transformation
  group $\Deck$ is isomorphic to $\Z$ in our case. The completion process
  rests on a notion of height for the letters, and in order to keep the
  notations reasonably simple, following remark \ref{rk:ArbitraryHeight},
  we fix the height of all the letters $g_{1},\dots,g_{m}$ to be $0$ and
  assume that the downward shift decreases the height by $1$. The
  associated restriction maps, in the completion of the free group, will
  still be denoted by $\zip{\infty}{h}$~:
  $$
  \Free[\Z]{\{g_{1},\dots,g_{m}\}} \xrightarrow{\zip{\infty}{h}}
  \zipped{\PreFree[\Z]{\{g_{1},\dots,g_{m}\}}}{h}.
  $$

  Let $w$ be one of the relations $w_{i}$, $1\leq i\leq r$. It is an
  element of the projective limit $\varprojlim
  \zipped{\PreFree[\Z]{\{g_{1},\dots,g_{m}\}}}{h}$, and hence, above each
  level $h$, $\zip{\infty}{h}{w}$ is a finite word in shifts of the
  letters $g_{1},\dots,g_{m}$. Using the notations of section
  \ref{sec:Relations}, we can write
  $$
  \zip{\infty}{h}(w) = \prod_{j=1}^{N}(k_{j},g_{i_{j}}^{\alpha_{j}}),
  $$
  where the shift components $k_{j}$ are such that $k_{j}\geq h$.

  Given an element $\lambda =
  (\lambda_{1},\dots,\lambda_{m})\in\Lambda_{\R}^{m}$, we define an
  element $\rho_{w}(\lambda)\in\Lambda_{\R}$ in the following way.

  If $\lambda=0$, we simply let $\rho_{w}(\Lambda)=0$. If not, consider
  an integer $\nu$ such that
  \begin{equation}
    \label{eq:nu}%
    \nu\leq \min\{\nu(\lambda_{i}) : \lambda_{i}\neq0 \},
  \end{equation}
  where $\nu(\lambda_{i})$ denotes the $t$-valuation of $\lambda_{i}$.
  For $h\in\Z$, define
  $$
  \rho_{w,h}(\lambda) =
  \big[\sum_{j=1}^{N}\alpha_{j}t^{-k_{j}}\lambda_{i_{j}} \big]_{\nu-h}\in\Lambda_{\nu-h}.
  $$

  Recall that for $h<h'$, we have
  $$
  \zip{\infty}{h'}(w)=\zip{h}{h'}(\zip{\infty}{h}(w)),
  $$
  i.e. that $\zip{\infty}{h'}(w)$ is obtained from $\zip{\infty}{h}(w)$
  by removing the letters whose levels are in $\{h,\dots,h'-1\}$. In
  particular, we have~:
  \begin{align*}
    [\rho_{w,h}(\lambda)]_{\nu-h'}
    &=
    \big[\sum_{\substack{1\leq j\leq N\\k_{j}<h'}}\alpha_{j}t^{-k_{j}}\lambda_{i_{j}}
    \big]_{\nu-h'} +
    \big[\sum_{\substack{1\leq j\leq N\\k_{j}\geq h'}}\alpha_{j}t^{-k_{j}}\lambda_{i_{j}}
    \big]_{\nu-h'}\\
    &=\big[\sum_{\substack{1\leq j\leq N\\k_{j}\geq h'}}\alpha_{j}t^{-k_{j}}\lambda_{i_{j}}
    \big]_{\nu-h'}\\
    &=\rho_{w,h'}(\lambda),
  \end{align*}
  which means that the sequence $(\rho_{w,h}(\lambda))$ defines an element
  $\rho_{w}(\lambda)$ in the inverse limit $\Lambda_{\R}$.

  Observe moreover that this limit does not depend on the choice of $\nu$
  provided $\nu$ satisfies \eqref{eq:nu}. In particular, an integer $\nu$
  that suits two different $\lambda$ can always be found, i.e. for
  $\lambda$, $\lambda'$ there exists
    \begin{equation*}
    \nu\leq \min\{\nu(\lambda_{i}), \nu(\lambda_{i}'): \lambda_{i}, \lambda_{i}'\neq0 \},
  \end{equation*}
  and it is not hard to check that the map
  $$
  \begin{array}{ccl}
    \Lambda_{\R}^{m}&\xrightarrow{\rho_{w}}& \Lambda_{\R}\\
    (\lambda_{1},\dots,\lambda_{m})&\mapsto&\rho_{w}(\lambda).
  \end{array}
  $$
  is $\Lambda_{\R}$ linear.

  \medskip
  We still have to prove the exactness of
  \eqref{eq:exactsequenceOLLambda}. Since each $w_{i}$ is a relation, we
  have $\rho\circ\epsilon=0$. Conversely, consider some
  $(\lambda_{1},\dots,\lambda_{m})\in\ker\rho$. Fix some integer $\nu$
  satisfying \eqref{eq:nu}, and consider the morphisms
  $$
  \begin{array}{ccc}
    F_{h}&\xrightarrow{\tilde{\phi}_{h}}&\Lambda_{\nu-h}\\
    \prod_{j}s^{k_{j}}(g_{i_{j}}^{\alpha_{j}})&\mapsto&
    \sum_{j}\alpha_{j}t^{-k_{j}}\lambda_{i_{j}}
  \end{array}.
  $$
  Since $\lambda\in\ker\rho$, we have
  $$
  \Rel_{h}\subset\ker \tilde{\phi}_{h},
  $$
  where $\Rel_{h}=\zip{\infty}{h}\Rel(A)$.

  Recall that, since the maps $\Pi_{h}\to\Pi_{h+1}$ are surjective and
  $A$ generates $\Pi$ up to DTC, every element $g\in\Pi_{h}$ can be written
  in the form
  \begin{equation}
    \label{eq:gdecomp}%
    g = \prod_{j=1}^{N}\zip{\infty}{h}(
    s^{k_{j}}(g_{i_{j}}^{\alpha_{j}})).
  \end{equation}
  In particular, this means that the projection $F_{h}\to\Pi_{h}$ is
  surjective.

  Notice however that although $\Rel_{h} \subseteq
  \ker(F_{h}\xrightarrow{\eval_{h}} \Pi_{h})$, the inclusion  might be
  strict in general, so that $\tilde{\phi}$ may not induce a morphism on
  $\Pi_{h}$ in general.

  The assumption that the relations are well behaved is made precisely to
  fix this~: let $\kappa$ be a constant such that
  $$
  \forall h'\in\Z, \forall h\in\Z,
  h \leq h'-\kappa \Rightarrow
  \zip{h}{h'}(K_{h}) = \zip{h'-\kappa}{h'}(K_{h'-\kappa}),
  $$
  where $K_{h}=\ker(F_{h}\xrightarrow{\eval_{h}}\Pi_{h})$. Then, for
  $h\leq h'-\kappa$, we have
  $$
  \zip{h}{h'}(K_{h})=\zip{\infty}{h'}(\Rel(A))=\Rel_{h'}.
  $$

  Observe now that $\zip{h-\kappa}{h}:\Pi_{h-\kappa}\to\Pi_{h}$ has a natural section
  $$
  \xymatrix{
    \Pi_{h-\kappa}\ar_{\zip{h-\kappa}{h}}[r]& \Pi_{h}\ar_{\iota}@/_2ex/[l]
  }
  $$
  mapping $\Pi_{h}$ to
  $\{1\}\ast\dots\ast\{1\}\ast\Pi_{h}\subset\Pi_{h-\kappa}$. Let
  $F'_{h-\kappa} = \eval_{h-\kappa}^{-1}(\iota(\Pi_{h})$ and $F''_{h} =
  \zip{h-\kappa}{h}(F'_{h-\kappa})$~:
  $$
  \xymatrix{
    F_{h-\kappa}\ar@{}[r]|-*[@]{\supset}\ar_{\eval_{h-\kappa}}[d]&
    F'_{h-\kappa}\ar^{\zip{h-\kappa}{h}}[r]\ar_{\eval_{h-\kappa}}[d]&
    F''_{h}\ar@{}[r]|-*[@]{\subset}&
    F_{h}\ar^{\eval_{h}}[d]\\
    \Pi_{h-\kappa}\ar@{}[r]|-*[@]{\supset}&
    \iota(\Pi_{h})\ar_{\zip{h}{h+\kappa}}[rr]&
    &
    \Pi_{h+\kappa}\ar_{\iota}@/_2ex/[ll]
  },
  $$
  and consider the restriction $\eval'_{h}$ of $\eval_{h}$ to $F'_{h}$.
  By construction, $\eval'_{h}$ is surjective, and its kernel satisfies~:
  $$
  \ker(\eval'_{h}) = \zip{h-\kappa}{h}(F'_{h-\kappa}\cap K_{h-\kappa})
  \subset\Rel_{h}.
  $$

  As a consequence, the morphism $\tilde{\phi}_{h}:F''_{h}\to\Lambda_{\nu-h}$ satisfies
  $$
  \ker(\eval'_{h})\subset\Rel_{h}\subset\ker(\tilde{\phi})
  $$
  and hence induces a morphism $\phi_{h}:\Pi_{h} =
  F''_{h}/\ker\eval'_{h}\to\Lambda_{\nu-h}$. Moreover, by
  construction we have
  $$
  \phi_{h}(g_{i}) = \zip{\infty}{\nu-h}(\lambda_{i}).
  $$
  As a consequence, the collection of morphisms $(\phi_{h})_{h\in\Z}$
  defines an element $\phi\in L_{\Lambda_{\R}}$ such that
  $$
  \lambda = \epsilon(\phi).
  $$
\end{proof}

\begin{proposition}\label{prop:LLambda=LotimesLambda}
  We have $L_{\Lambda_{\R}}=L\otimes\Lambda_{\R}$. In particular,
  $$
  \dim_{\R}L = \dim_{\Lambda_{\R}}L_{\Lambda_{\R}}.
  $$
\end{proposition}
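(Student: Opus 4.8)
The plan is to deduce the statement from flat base change, by applying the preceding lemma to the presentation of $\Pi$ that comes from a finite presentation of $G=\pi_{1}(X)$.

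First I would fix a finite presentation $\langle g_{1},\dots,g_{m}\mid w_{1},\dots,w_{r}\rangle$ of $G$ (possible since $X$ is a closed manifold), view the $g_{i}$ as single letters in $G_{0}\subset\Pi$, and take $(g_{1},\dots,g_{m})$ together with $(w_{1},\dots,w_{r})$ as a presentation of $\Pi=\pi_{1}(\Cov{M},u)$ up to DTC, all relation letters being declared to sit at height $0$. This is indeed a presentation up to DTC: the shifts of the $g_{i}$ generate $\freeprod_{k}G_{k}$ and hence generate $\Pi$ up to DTC, and since a free product of presentations is a presentation, $\Rel(\{g_{i}\})$ is, level by level, the normal closure of the shifted relations $\{s^{k}(w_{l})\}$, i.e. $\NormalSpan{\{w_{l}\}}$. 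Moreover this presentation has well-behaved relations: arguing as in the Morse case, the transition maps $K_{h}\to K_{h'}$ are surjective, since removing the letters of height $\le h'$ from the normal closure of the shifts $s^{k}(w_{l})$ of height $>h$ yields the normal closure of those of height $>h'$.

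Next I would run the preceding lemma on this presentation, obtaining the exact sequence $0\to L_{\Lambda_{\R}}\xrightarrow{\epsilon}\Lambda_{\R}^{m}\xrightarrow{\rho}\Lambda_{\R}^{r}$, so that $L_{\Lambda_{\R}}=\ker\rho$. The point is that for \emph{this} choice of presentation $\rho$ is simply the base change of the real map $\rho_{\R}\colon\R^{m}\to\R^{r}$ appearing in \eqref{eq:MorphismsAsLinearSystem}. Indeed, for $h$ sufficiently negative $\zip{\infty}{h}(w_{l})$ is $w_{l}$ itself, all of whose letters sit at level $k_{j}=0$; hence in the formula defining $\rho_{w_{l},h}$ every factor $t^{-k_{j}}$ equals $1$ and one finds $\rho_{w_{l}}(\lambda)=\sum_{j}\alpha_{j}\lambda_{i_{j}}$, which is exactly the abelian rewriting of $w_{l}$ evaluated on $(\lambda_{1},\dots,\lambda_{m})$. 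Thus $\rho=\rho_{\R}\otimes_{\R}\Lambda_{\R}$.

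Finally, since $\Lambda_{\R}=\R((t))$ is free, hence flat, over the field $\R$, taking kernels commutes with $-\otimes_{\R}\Lambda_{\R}$, so
$$
L_{\Lambda_{\R}}=\ker(\rho_{\R}\otimes_{\R}\Lambda_{\R})=(\ker\rho_{\R})\otimes_{\R}\Lambda_{\R}=L\otimes_{\R}\Lambda_{\R},
$$
and the dimension statement follows at once. I expect the only mildly delicate points to be verifying that this algebraic presentation really satisfies the ``well-behaved relations'' hypothesis (handled by the surjectivity of the maps $K_{h}\to K_{h'}$, exactly as in the Morse case) and unwinding the definition of $\rho$ far enough to recognize it as a base change; any torsion in $\abelian{G}$ is harmless, being killed by both $\R$ and $\Lambda_{\R}$. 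Alternatively, one could bypass the lemma and argue directly: an element $\phi\in L_{\Lambda_{\R}}$ is determined by its restriction to $G_{0}\subset\Pi$, a homomorphism $G\to\Lambda_{\R}$; since $G$ is finitely generated and $\Lambda_{\R}$ is torsion-free, $\Hom(G,\Lambda_{\R})$ is canonically $\Hom(G,\R)\otimes_{\R}\Lambda_{\R}=L\otimes_{\R}\Lambda_{\R}$, and shift-equivariance then recovers $\phi$ on every single letter $s^{k}(g)$, hence everywhere.
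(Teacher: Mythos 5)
Your proposal is correct, and your \emph{main} argument takes a genuinely different route from the paper, while the ``alternative'' you sketch in your final sentences is essentially the paper's own proof. The paper argues directly: a morphism $\phi=(\phi_{h})\in L_{\Lambda_{\R}}$ is determined by its restriction to $G_{0}\subset\Pi$, which is split according to the degree of $t$ into a family of real-valued morphisms $\alpha_{h,d}\in L$, and then along a finite basis of $L$; the coefficients assemble into elements of $\Lambda_{\R}$, giving $\phi=\sum_{i}\lambda_{i}\alpha^{(i)}$. Your main route instead feeds the tautological presentation of $\Pi$ (coming from a finite presentation of $G$, all letters at height $0$) into the exact sequence $0\to L_{\Lambda_{\R}}\to\Lambda_{\R}^{m}\to\Lambda_{\R}^{r}$ of the preceding lemma, observes that for this presentation all level indices $k_{j}$ vanish so that $\rho=\rho_{\R}\otimes_{\R}\Lambda_{\R}$ with $\rho_{\R}$ the map of \eqref{eq:MorphismsAsLinearSystem}, and concludes by flatness of $\Lambda_{\R}$ over $\R$. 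This is valid, and you correctly identify and handle the two points that actually need checking: that the shifted relations $s^{k}(w_{l})$ normally span $\Rel(\{g_{i}\})$ up to DTC (free product of presentations, level by level, then pass to the limit), and that the transition maps $K_{h}\to K_{h'}$ are surjective, so the relations are well behaved. As for what each approach buys: the paper's direct computation is shorter and presentation-free, using only that $G$ is finitely generated (so that $\Hom(G,-)$ of an $\R$-vector space is $L\otimes_{\R}(-)$); yours makes completely transparent that the $\Lambda_{\R}$-linear system computing $L_{\Lambda_{\R}}$ is literally the base change of the classical one computing $L$ --- which is the conceptual reason the deficiency bound of the following corollary reduces to $\dim_{\R}\Hom(\pi_{1}(X),\R)$ --- at the cost of verifying the hypotheses of the lemma for one specific presentation.
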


\begin{proof}
  Let $\phi$ be a morphism in $L_{\Lambda_{\R}}$. Observe that a morphism
  $\phi=(\phi_{h})_{h\in\Z}\in L_{\Lambda_{\R}}$ is fully determined by
  its behaviour on single letters in $G_{0}\subset\Pi$, i.e. by the maps
  $$
  \phi_{h,0}={\phi_{h}}_{|_{G_{0}}}: G_{0}\to \Lambda_{h}.
  $$

  Splitting $\phi_{h,0}$ according to the degree of $t$, we can write
  $$
  \phi_{h,0}=\sum_{d=-N}^{-h} t^{d}\alpha_{h,d},
  $$
  where the $\alpha_{h,d}$ are now morphisms from $G_{0}$ to $\R$, i.e.
  elements of $L$.

  Splitting each $\alpha_{h,d}$ according to a basis
  $(\alpha^{(1)},\dots, \alpha^{(k)})$ of $L$, we write $\phi_{h,0}$ as
  $$
  \phi_{h,0}=\sum_{i=1}^{k} \lambda_{i,h} \alpha^{(i)},
  $$
  where each $\lambda_{i,h}$ is in $\Lambda_{-h}$. Moreover, for $h<h'$,
  $\lambda_{h'}=[\lambda_{h}]_{-h'}$, so that the sequence
  $\lambda_{i,h}$ defines an element $\lambda_{i}\in\Lambda_{\R}$, and we
  have
  $$
  \phi=\sum_{i=1}^{k} \lambda_{i}\alpha^{(i)}.
  $$
  Here $\alpha^{(i)}$ is seen as the element of $L$
  naturally induced by $\alpha^{(i)}$ on the $0$-level. This ends the
  proof of \ref{prop:LLambda=LotimesLambda}.
\end{proof}

\begin{corollary}
  The real dimension $\dim_{\R} \Hom(\pi_{1}(X),\R)$ is an upper bound for
  the deficiency up to DTC of $\pi_{1}(\Cov{M},u)$. Here the deficiency
  up to DTC is the maximum of $m-r$, where $m$ is the cardinal of a
  generating family, and $r$ of the associated relations, both up to DTC.
\end{corollary}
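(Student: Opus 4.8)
The plan is to read the corollary off directly from the exact sequence \eqref{eq:exactsequenceOLLambda} and Proposition \ref{prop:LLambda=LotimesLambda}, so that essentially no new argument is needed. Concretely, I would start from an arbitrary finite presentation $\langle g_{1},\dots,g_{m}\mid w_{1},\dots,w_{r}\rangle$ of $\pi_{1}(\Cov{M},u)=\Pi$ up to DTC realizing a given value $m-r$ of the deficiency, and arrange (this is where the preceding hypothesis intervenes) that its relations are well behaved; recall that the presentations coming from Morse $1$-forms always have this property. The lemma furnishing \eqref{eq:exactsequenceOLLambda} then gives the exact sequence
$$
0\to L_{\Lambda_{\R}}\xrightarrow{\epsilon}\Lambda_{\R}^{m}\xrightarrow{\rho}\Lambda_{\R}^{r},
$$
hence $L_{\Lambda_{\R}}\cong\ker\rho$, and since $\rho$ takes values in $\Lambda_{\R}^{r}$ this forces $\dim_{\Lambda_{\R}}L_{\Lambda_{\R}}=\dim_{\Lambda_{\R}}\ker\rho\ge m-r$.

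Next I would invoke Proposition \ref{prop:LLambda=LotimesLambda}, which identifies $L_{\Lambda_{\R}}$ with $L\otimes_{\R}\Lambda_{\R}$ and therefore yields $\dim_{\Lambda_{\R}}L_{\Lambda_{\R}}=\dim_{\R}L=\dim_{\R}\Hom(\pi_{1}(X),\R)$. Chaining this with the previous inequality gives $m-r\le\dim_{\R}\Hom(\pi_{1}(X),\R)$ for the chosen presentation, and since it was arbitrary (among those with well-behaved relations), passing to the supremum over all such finite presentations up to DTC shows that the deficiency up to DTC of $\pi_{1}(\Cov{M},u)$ is at most $\dim_{\R}\Hom(\pi_{1}(X),\R)$, which is exactly the claim.

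The one point I would treat with care — and the only place where anything could go wrong — is the bookkeeping around the ``well-behaved relations'' hypothesis. The exact sequence \eqref{eq:exactsequenceOLLambda} is established only for such presentations: injectivity of $\epsilon$ uses merely that the $g_{i}$ generate $\Pi$ up to DTC, and $\rho\circ\epsilon=0$ holds because each $w_{i}$ is a relation, but the exactness at $\Lambda_{\R}^{m}$ relies on the Mittag-Leffler property of the system $(K_{h})$. So one should either build well-behavedness into the definition of the deficiency up to DTC, or note that the presentations relevant for applications — in particular all presentations attached to a Morse $1$-form in the class $u$, by the preceding proposition — are of this form, so that the bound above applies to them verbatim. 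Everything else is routine linear algebra over the field $\Lambda_{\R}$.
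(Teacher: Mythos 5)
Your proposal is correct and follows exactly the route the paper intends: the corollary is an immediate consequence of chaining the inequality $\dim_{\Lambda_{\R}}L_{\Lambda_{\R}}\geq m-r$ from the exact sequence \eqref{eq:exactsequenceOLLambda} with the identification $\dim_{\Lambda_{\R}}L_{\Lambda_{\R}}=\dim_{\R}L$ of Proposition \ref{prop:LLambda=LotimesLambda}, and the paper offers no further argument. Your caveat about restricting to presentations with well-behaved relations is the right point to flag, and your resolution (the presentations relevant to the Morse-theoretic applications always have this property) matches the paper's stance.
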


\subsection{Comparing Novikov fundamental group and  homologies}

In this section, we use the results of the two previous sections to
provide examples where the numerical estimates for the number of
index $1$ and $2$ critical points obtained from the Novikov
fundamental group are essentially different from the estimates that
can be derived from the Novikov homologies.

\subsubsection{Comparison on the minimal integration cover}

Suppose now that $X$ satisfies the following conditions
\begin{gather*}
  \dim X=n\geq4,\label{eq:ExampleCond1}\\
  H_{1}(X,\Z)=H_{2}(X,\Z)=0\label{eq:ExampleCond2},\\
  \pi_{1}(X)\neq 1\label{eq:ExampleCond3}.
\end{gather*}

This is the case for instance if $X=S\times\S^{n-3}$, $n\geq 4$,
where $S$ is the Poincaré homology sphere.

Recall that the manifold under interest is then $M=\T^{n}\times X$,
endowed with the one form $u=[\pi^{*}(d\theta_{1})]$, and $\Cov{M}$
is the minimal integration cover of $u$.

Then from proposition \ref{prop:HNexample}, we get
$$
HN_{1}(\Cov{M},u) = HN_{2}(\Cov{M},u) = 0,
$$
while from proposition \ref{prop:noncyclic} we get
$$
\pi_{1}(\Cov{M},u)\neq 1.
$$

In particular, the Novikov homology on the minimal cover does not
detect any index $1$ or $2$ critical points, while the Novikov
fundamental group does since we necessarily have
$$
\mingen(\Pi)>0.
$$
Similarly, if $G$ has non trivial relations, which is the case for
the Poincaré sphere for instance, then so does $\pi_{1}(\Cov{M},u)$
and hence
$$
\minrel(\Pi)>0.
$$

In the case where $X=S\times\S^{n-3}$, we can be even more precise,
since $\pi_{1}(X)$ is not cyclic and has at least $2$ generators, we
have
$$
\mu_{DTC}(\pi_{1}(\Cov{M},u)) = 2.
$$
Similarly \eqref{eq:PresentationOfpi1S} is a presentation of
$\pi_{1}(X)$ with $2$ generators and $2$ relations, so
$\rho_{DTC}(\pi_{1}(\Cov{M},u))\leq 2$, while it is not hard to see
from these relations that $\dim_{\R}\Hom(\pi_{1}(X),\R)=0$, so that
$\rho_{DTC}(\pi_{1}(\Cov{M},u))\geq 2$ and finally
$$
\rho_{DTC}(\pi_{1}(\Cov{M},u)) = 2.
$$

This proves Theorem \ref{thm:exple} which we recall below
\begin{corollary}\label{cor:NonTrivialPi1}
  When $X=S\times\S^{n-3}$, the Novikov fundamental  group associated to
  the class $u=\pi^{*}[d\theta]$ on $M=\T^{n}\sharp X$ is non trivial,
  and  any Morse $1$-form in this cohomology class  has to have at least
  $2$ index $1$ and $2$ index $2$ critical points.
\end{corollary}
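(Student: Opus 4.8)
The plan is simply to collect the invariants computed in this section and combine them with Theorem~\ref{thm:main}. For $X=S\times\S^{n-3}$ I will establish that $\mingen(\pi_{1}(\Cov{M},u))=2$ and $\minrel(\pi_{1}(\Cov{M},u))=2$; the bounds on index $1$ and $2$ critical points then follow at once from inequalities \eqref{eq:Crit1LowerBound} and \eqref{eq:Crit2LowerBound}, and non-triviality of $\pi_{1}(\Cov{M},u)$ is a byproduct of $\mingen>0$. The only facts about $S$ that will be used are these: $G=\pi_{1}(X)\cong\pi_{1}(S)$ is, by \eqref{eq:PresentationOfpi1S}, the binary icosahedral group, which is finite of order $120$, non-abelian (hence not cyclic), and perfect, so that $\dim_{\R}\Hom(\pi_{1}(X),\R)=0$.

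For the count of generators: since $G$ is not cyclic, Proposition~\ref{prop:noncyclic} gives $\mingen(\pi_{1}(\Cov{M},u))\geq 2$. Conversely the two-element generating set $\{a,b\}$ of $G$, viewed inside $G_{0}\subset\Pi$, generates $\pi_{1}(\Cov{M},u)=\Pi=\varprojlim_{h}\Pi_{h}$ up to deck transformations and completion, so that $\mingen(\pi_{1}(\Cov{M},u))\leq\mu(\pi_{1}(X))=2$. Hence $\mingen(\pi_{1}(\Cov{M},u))=2$, and in particular $\pi_{1}(\Cov{M},u)\neq 1$.

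For the count of relations I argue by a two-sided estimate. The presentation \eqref{eq:PresentationOfpi1S}, transported to $\Pi$ along the identification above (the relators $a^{5}b^{-3}$ and $a^{5}(ab)^{-2}$, placed at level $0$, normally span $\Rel(\{a,b\})$ up to DTC, since at each level $h$ one recovers exactly $\Pi_{h}=\freeprod_{k\geq h}G_{k}$), exhibits a presentation of $\pi_{1}(\Cov{M},u)$ up to DTC with $2$ generators and $2$ relations, so $\minrel(\pi_{1}(\Cov{M},u))\leq 2$. For the opposite inequality I invoke the corollary bounding the deficiency up to DTC by $\dim_{\R}\Hom(\pi_{1}(X),\R)$: for every generating family $A$ of $\pi_{1}(\Cov{M},u)$ up to DTC one has $\sharp A-\minrel(A)\leq\dim_{\R}\Hom(\pi_{1}(X),\R)=0$, whence $\minrel(A)\geq\sharp A\geq\mingen(\pi_{1}(\Cov{M},u))=2$; taking the infimum over $A$ gives $\minrel(\pi_{1}(\Cov{M},u))\geq 2$, hence $\minrel(\pi_{1}(\Cov{M},u))=2$.

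Finally, for any Morse $1$-form $\alpha$ in the class $u$, using the canonical isomorphism $\pi_{1}(f_{\alpha})\cong\pi_{1}(\Cov{M},u)$, inequalities \eqref{eq:Crit1LowerBound} and \eqref{eq:Crit2LowerBound} (that is, Theorem~\ref{thm:main}) give
$$
\sharp\Crit_{1}(\alpha)\ \geq\ \mingen(\pi_{1}(\Cov{M},u))=2,
\qquad
\sharp\Crit_{2}(\alpha)\ \geq\ \minrel(\pi_{1}(\Cov{M},u))=2,
$$
which is exactly the assertion. The only genuinely delicate ingredient in this chain is the lower bound $\mingen\geq 2$ supplied by Proposition~\ref{prop:noncyclic}; granting it and the deficiency corollary, the present statement is pure bookkeeping.
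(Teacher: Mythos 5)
Your proof is correct and follows essentially the same route as the paper: Proposition~\ref{prop:noncyclic} for the lower bound $\mingen\geq 2$, the presentation \eqref{eq:PresentationOfpi1S} for the upper bounds, the deficiency bound via $\dim_{\R}\Hom(\pi_{1}(X),\R)=0$ for $\minrel\geq\mingen=2$, and Theorem~\ref{thm:main} to convert these into counts of index $1$ and $2$ critical points. Nothing essential to add.
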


This example shows that the lower bounds \eqref{eq:Crit1LowerBound}
and \eqref{eq:Crit2LowerBound} derived from Novikov fundamental
group are essentially different from the Novikov inequalities
derived from the Novikov homology associated to the minimal
integration cover.

\begin{remark}
  When $X=S\times\S^{n-3}$, we also have $\pi_{2}(X)=0$, and hence
  $HN_{2}(M,u;\Z[\pi_{1}(M)])=0$, so that the Morse-Novikov inequalities
  for the universal cover do not detect any index $2$ critical points
  either.

  However, the comparison of the Novikov homologies on the universal and
  minimal integration covers still allow to detect index $2$ critical
  points as detailed below.
\end{remark}

\subsubsection{Comparison on the universal cover}\label{sec:ComparisonOnUniversalCover}

A nice observation pointed to us by A. Pajitnov allows to detect
index $2$ critical points from the comparison of the Novikov
homology on the minimal and universal covers, even though the second
homology group vanish~: starting with a $1$-form $\alpha$ in the
class $u$, it is possible to modify to remove all the index $0$
critical points, without creating new index $2$ critical points.
Then, since $HN_{1}(M,u;\Z[\pi_{1}])\neq 0$, $\alpha$ has to have
index $1$ critical points. Moreover, since $HN_{1}(M,u)=0$, all
these points have to be killed by index $2$ critical points.

The number of index $2$ critical points detected by this observation
is at most that of index $1$ critical points. More precisely, it is
at most the difference $|\tilde{\beta}_{1}-\beta_{1}|$ where
$\beta_{1}$ and $\tilde{\beta}_{1}$ are the minimal number of
generators for $HN_{1}(M,u)$ and $HN_{1}(M,u;\Z[\pi_{1}(M)])$.

\medskip

Consider the case where $M=\T^{n}\sharp X$ with $X=\R\P^{n}$ ($n\geq
4$).

From \eqref{prop:HNexample}, we obtain
$$
HN_1 (M,u) =  \Z/2\Z((t)) \quad\text{ and }\quad HN_2 (M,u) =  0.
$$
On the other hand, $HN_{1}(M,u;\Z[\pi_{1}(M)])\neq 0$ so that from
\ref{prop:HNtildeExample}, we get that the minimal number of
generators of $HN_{1}(M,u;\Z[\pi_{1}(M)])$ is $1$, and
$$
HN_2 (M,u;\Z[\pi_{1}(M)]) =  0.
$$

In particular, the comparison of the Novikov homologies associated
to the minimal and universal covers does not provide any index $2$
critical points, while the Novikov fundamental group does, since we
obviously have
$$
\mu_{DTC}(\pi_{1}(M,u))=1 \text{ and } \rho_{DTC}(\pi_{1}(M,u))=1.
$$
In particular, every Morse $1$-form on $\T^{n}\sharp\R\P^{n}$ has to
have at least one index $2$ critical point.

{\bf Acknowledgement}.  HVL  thanks   JFB  and  Nguy\^en Ti\^en
Zung, L\^e Ngoc Mai for their  invitation to  Toulouse  and their
hospitality during her visit    in May 2016,    when  she and JFB
started this project aiming to  develop    ideas in
\cite{Barraud2014}   further for the Novikov  case.

AG thanks JFB and CIMI for the invitation to Toulouse in June 2017,
when she joined the project, the Hausdorff Research Institute
for Mathematics (HIM), University of Bonn for its support and
hospitality in the fall 2017, and the Mathematics Institute of 
Uppsala University for its nice working atmosphere.

We thank  Andrei Pajitnov for his  stimulating comments to the first
e-print of our paper.

\end{document}